\documentclass[11pt,a4paper]{article}
 \usepackage{amsthm}
 \usepackage{chngcntr} 

 \usepackage{amsfonts}
 
 \usepackage{graphics}
 \usepackage{indentfirst}
 \usepackage{latexsym}
 \usepackage{amsmath}
 \usepackage{amssymb}
 \usepackage[dvips]{epsfig}
 \usepackage{amscd}
 \usepackage{amsthm}
 \usepackage{hyperref}
 \hypersetup{
 	colorlinks=true,
 	linkcolor=blue,
 	anchorcolor=blue,
 	citecolor=blue}
 
 \usepackage{cite}
 
 \newtheorem{theorem}{Theorem}[section]
 \newtheorem{remark}{Remark}[section]
 
 \newtheorem{lemma}[theorem]{Lemma}
 \newtheorem{pro}{Proposition}[section]
 \newtheorem{cor}{Corollary}[section]

 \newcommand{\bl}{\begin{lemma}}
 	\newcommand{\el}{\end{lemma}}
 \newcommand{\et}{\end{theorem}}

\newcommand{\la}{\label}

\newcommand{\bn}{\begin{eqnarray}}
\newcommand{\en}{\end{eqnarray}}
\newcommand{\bnn}{\begin{eqnarray*}}
\newcommand{\enn}{\end{eqnarray*}}

\newcommand{\bnnn}{\begin{eqnarray*}}
\newcommand{\ennn}{\end{eqnarray*}}

\newcommand{\ba}{\begin{aligned}}
\newcommand{\ea}{\end{aligned}}
\newcommand{\be}{\begin{equation}}
\newcommand{\ee}{\end{equation}}

\def\norm[#1]#2{\|#2\|_{#1}}

\def\la{\label}

\makeatletter      
\@addtoreset{equation}{section}
\makeatother       
\title{The Outer Pressure Problem for the Compressible Navier--Stokes System
with Temperature-Dependent Transport Coefficients}
\date{ }

\allowdisplaybreaks[4]
\begin{document}
\author{Hongyu Wang$^a$, Rong Zhang$^b$ \thanks{
		Email addresses: {hyuwang\_a@163.com} (H. Y. Wang),  rzhang0921@gmail.com (R.
		Zhang).} \\[3mm]  a. School of Mathematics and Computer Science,\\ Nanchang University,  Nanchang 330031, P. R. China; \\
	b. School of Mathematics and Computer Science, \\Nanchang University,  Nanchang 330031, P. R. China.}
\maketitle
\begin{abstract}
We study global strong solutions and their large-time behavior for the
one-dimensional outer pressure problem of a viscous, heat-conducting
ideal polytropic gas. The viscosity and heat conductivity satisfy
$\mu=\tilde\mu\theta^\alpha$ and $\kappa=\tilde\kappa\theta^\beta$.
For each prescribed nonnegative pressure and each fixed $\beta\geq0$,
we allow large $H^1$ initial data with positive specific volume and
temperature, provided that $\alpha\geq0$ is sufficiently small.
When the limiting outer pressure is positive, the solution has uniform
bounds and converges to a stationary state. When the limiting outer pressure
is zero and the stated weighted pressure conditions hold, the
normalized solution converges to an expanding state at an algebraic
rate in physical time. 
\end{abstract}
\noindent Key words: Navier--Stokes system; outer pressure; temperature-dependent viscosity; strong solution.

\section{Introduction} 
The one-dimensional motion of a viscous, heat-conducting polytropic gas is described in Lagrangian coordinates by the compressible Navier--Stokes system (see \cite{BGK,SJ})
\begin{equation}
	v_t=u_x,
	\label{equ-1}
\end{equation}
\begin{equation}
    u_t + \left(\frac{R\theta}{v}\right)_x = \left(\mu\frac{u_x}{v}\right)_x
    \label{equ-2},
\end{equation}
\begin{equation}
	\left(e+\frac{u^2}{2}\right)_t +\left(\frac{R\theta u}{v}\right)_x = \left(\kappa\frac{\theta_x}{v}+\mu\frac{uu_x}{v}\right)_x,
	\label{equ-3}
\end{equation}
Here $x\in[0,1]$ and $t\geq0$ denote space and time, respectively. The variables $v>0$, $u$, $\theta>0$ and $e$ denote the specific volume, velocity, absolute temperature and specific internal energy. The positive constant $R$ is the gas constant, and 
\begin{equation*}
	e=c_v\theta = \frac{R\theta}{\gamma-1},
\end{equation*} 
where $\gamma>1$ is the adiabatic exponent.

The viscosity coefficient $\mu$ and heat conductivity coefficient $\kappa$ depend on temperature and are given by
\begin{equation}
	\mu=\tilde{\mu} \theta^{\alpha}, \quad \kappa=\tilde{\kappa} \theta^{\beta} ,
	\label{mu-kappa}
\end{equation}
where $\tilde{\mu}$ and $\tilde{\kappa}$ are positive constants and $\alpha,\beta\geq 0 .$

The system (\ref{equ-1})-(\ref{mu-kappa}) is supplemented with the initial condition
\begin{equation}
	\label{equ-4}
	(v(x,0),u(x,0),\theta(x,0))=(v_0,u_0,\theta_0)(x), \quad x\in[0,1],
\end{equation}
and the outer pressure boundary conditions for $t>0$:
\begin{equation}
	\left(\frac{\mu}{v} u_{x}-R \frac{\theta}{v}\right)(0, t)=\left(\frac{\mu}{v} u_{x}-R \frac{\theta}{v}\right)(1, t)=-P(t),
	\label{equ-5}
\end{equation}
\begin{equation}
	\label{equ-6}
		\theta_{x}(0, t)=\theta_{x}(1, t)=0,
\end{equation}
where $t>0$.

Transport coefficients depending on density and temperature occur in the thermomechanical models studied in \cite{DMC,D-H}. For a dilute gas, the first Chapman--Enskog approximation gives viscosity and heat conductivity depending on temperature alone; see \cite{CS-CTG}.

For constant positive viscosity and heat conductivity, global existence and large-time behavior have been studied in \cite{L-L,A-Z,A-K-M,JS94,JS98,S-N,NT}. Uniform bounds for the specific volume were obtained by Jiang \cite{JS94,JS98}. For unbounded domains, Li and Liang \cite{L-L} established uniform temperature estimates and the large-time behavior of large solutions. 

For constant viscosity and temperature-dependent heat conductivity, Pan and Zhang \cite{P-Z} established global strong solutions. Related existence and asymptotic results can be found in \cite{H-S,J-K,WT,L-X,L-P-P}. When the viscosity coefficient $\mu$ depends on the specific volume $v$, it follows from (\ref{equ-1}) and (\ref{equ-2}) that
 \begin{equation}
	\label{mu-v}
	\left(\frac{\mu(v)v_x}{v}\right)_t=u_t+p_x,
\end{equation}
 Global solutions for more general thermomechanical models with $\mu=\mu(v)\geq C^{-1}$ and $\kappa=\kappa(v,\theta)\geq C^{-1}$ were studied in \cite{DMC,D-H,KB,WHD}. Duan, Guo and Zhu \cite{D-G-Z} proved global existence and uniqueness for $\mu=1+v^{-\alpha}$ and $\kappa=\theta^\beta$. 
 When $\mu$ is temperature-dependent, the identity corresponding to (\ref{mu-v}) becomes
 \begin{equation}
 	\label{mu-vthe}
 	\left(\frac{\mu(v,\theta)v_x}{v}\right)_t = u_t +p_x +\frac{\mu_\theta(v,\theta)}{v}\left(\theta_t v_x-u_x\theta_x\right).
 \end{equation}
 Wang and Zhao \cite{W-Z} considered the density-temperature dependent case where $\mu=\tilde{\mu}h(v)\theta^{\alpha}$, $\kappa = \tilde{\kappa}h(v)\theta^\alpha$, and showed the Cauchy problem for (\ref{equ-1})--(\ref{equ-3}) has a unique smooth solution with $|\alpha|\leq \varepsilon_0$. For related outer pressure problems with variable coefficients, see \cite{H-L,L97}.

When $\mu$ and $\kappa$ depend solely on $\theta$, \cite{L-Y-Z-Z,W-W} established a global existence result of Nishida-Smoller type. This result holds under the condition  that the adiabatic exponent $\gamma$ is sufficiently close to 1 and the initial temperature has small oscillation. Subsequently, Sun-Zhang-Zhao \cite{S-Z-Z} considered the case that $\mu=\theta^\alpha$  with $\alpha>0$  suitably small in (\ref{equ-4}),
requiring the initial data of $(v_0,u_0,\theta_0)$ to be in the space $ H^2\times H^2 \times H^2$. Thereafter, Dong-Tan\cite{D-T} relaxed the regularity requirement for the initial data to $H^2\times H^1\times H^1 $.

For the outer pressure problem, Nagasawa \cite{N88} established convergence to a stationary state when the pressure and transport coefficients are positive constants. Cai, Chen, and Peng \cite{C-C-P} investigated the same pressure condition but with parameters satisfying $\alpha=0,\beta>0$ in (\ref{mu-kappa}). The case of vanishing pressure $P(t)=0$ under the same parameter conditions was later considered by Cai et al. \cite{C-C-P-P}. They obtained decay of the normalized strong solutions, exponential in logarithmic time. Dong \cite{D25} considered more general asymptotic pressure profiles, including both $\lim _{t \rightarrow+\infty} P(t)>0$ and $\lim _{t \rightarrow+\infty} P(t)=0$, under the condition that
$$P(t)\in C^1[0,+\infty),\quad P^\prime(t) \in L^1\cap L^2(0,+\infty),$$ 
and established the large-time behavior for $H^2\times H^1\times H^1 $. 

We study the outer pressure problem (\ref{equ-1})--(\ref{equ-6}) with the temperature-dependent coefficients (\ref{mu-kappa}) and initial data in $H^1\times H^1\times H^1$. For each prescribed pressure in the classes below, we prove global existence and asymptotic behavior when the viscosity exponent is sufficiently small. The positive limiting pressure and vanishing limiting pressure cases are stated separately.

Throughout this paper $R,c_v,\tilde\mu,\tilde\kappa$ are fixed positive
constants. Subtracting \eqref{equ-2} multiplied by $u$ from \eqref{equ-3},
we obtain
\begin{equation}\label{internal-energy}
 c_v\theta_t+\frac{R\theta u_x}{v}
 =\left(\frac{\kappa\theta_x}{v}\right)_x+\frac{\mu u_x^2}{v}.
\end{equation}
\begin{theorem}\la{tm11} For some positive constants $M_{0}$,  $\underline{v}$, and $\underline{\theta}$, suppose that 
	the initial data $\left(v_{0}, u_{0}, \theta_{0}\right)$ satisfies
	\begin{equation}
		\inf_{x\in[0,1]}v_{0}(x) \geq \underline{v},\quad \inf_{x\in[0,1]}\theta_{0}(x) \geq \underline{\theta}, \quad \left\|\left(v_{0},  u_{0}, \theta_{0}\right)\right\|_{H^{1}} \leq M_{0},
		\label{th1-ini}
	\end{equation}	
	and the outer pressure $P(t)$  satisfies 
	\begin{equation}
        P(t)\geq0\ (t\geq0),\quad
        \lim_{t\to+\infty}P(t)=\bar P>0,\quad
        P\in C^1[0,+\infty),\quad P'\in L^1\cap L^2(0,+\infty).
		\label{th1-pt}
	\end{equation}
Then, for any $\beta\geq0$ and each fixed pressure $P$ satisfying
\eqref{th1-pt}, there is a positive
$\varepsilon_0=\varepsilon_0(M_0,\underline v,\underline\theta,
\beta,R,c_v,\tilde\mu,\tilde\kappa;P)$, depending only on the stated data and independent of the terminal time, such that problem (\ref{equ-1})--(\ref{mu-kappa})
with $0\leq\alpha\leq\varepsilon_0$ admits a unique global strong
solution $(v,u,\theta)$ satisfying
	\begin{equation}\la{th1-vu}
		\left\{\begin{array}{l}
			\left(v, u,  \theta\right) \in C\left([0, \infty); H^1(0,1)\right), \quad \left(u, \theta\right)\in C\left((0, \infty); H^2(0,1)\right), \\
			v_t \in C\left(0, \infty ; L^2(0,1)\right) \cap L^2\left(0, \infty ; H^1(0,1)\right), \\
			v_x, u_x, \theta_x, u_t, \theta_t, v_{x t}, u_{x x}, \theta_{x x} \in L^2((0,1) \times(0, \infty)),
		\end{array}\right.
	\end{equation}	
	and
	\begin{equation}
		\label{th1-tinft}
		\lim _{t \rightarrow+\infty}\|(v-\hat{v}, u-\hat{u}, \theta-\hat{\theta})\|_{H^{1}}=0,
	\end{equation}
	where
	\begin{equation}
		\begin{aligned}
			\label{th1-hat}
 &\hat u=\int_0^1u_0dx,\\
 &\hat\theta=\frac1{c_v+R}\left[\int_0^1\left(P(0)v_0+c_v\theta_0+
 \frac12u_0^2\right)dx-\frac12\hat u^2+
 \int_0^\infty P'(s)\int_0^1v(x,s)dx\,ds\right],\\
 &\hat v=\frac R{\bar P}\hat\theta.
		\end{aligned}
	\end{equation}
\end{theorem}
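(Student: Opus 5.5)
We prove Theorem \ref{tm11} by combining local existence with a priori estimates obtained through a continuity argument in the exponent $\alpha$. Local existence and uniqueness of a strong solution in the class \eqref{th1-vu} on some $[0,T_*]$ follow from standard fixed-point arguments, since $v_0,\theta_0$ are bounded below and $H^1\hookrightarrow C$. It therefore suffices to derive bounds on $[0,T]$ that are independent of $T$. The bootstrap hypothesis is
\[
\tfrac12\underline\theta\, m\le \theta(x,t)\le 2\Theta,\qquad t\in[0,T],
\]
for suitable constants $m,\Theta$ depending only on the data. Under this hypothesis $\theta^\alpha$ lies in $[(m\underline\theta/2)^\alpha,(2\Theta)^\alpha]$, and both ends tend to $1$ as $\alpha\to0$. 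This is where the smallness of $\alpha$ enters: the viscosity behaves like the constant-coefficient case (\cite{C-C-P,D25}), uniformly in $T$. In closing the estimates we improve the hypothesis to the constants $m,\Theta$ themselves, provided $\alpha\le\varepsilon_0$.

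\textbf{Basic energy and volume bounds.} Integrating \eqref{equ-1}--\eqref{equ-3} over $[0,1]$ and using \eqref{equ-5}--\eqref{equ-6} gives
\[
\frac{d}{dt}\int_0^1 v\,dx=u(1,t)-u(0,t),
\]
together with the total energy identity
\[
\frac{d}{dt}\int_0^1\Big(c_v\theta+\tfrac12u^2+Pv\Big)dx=P'(t)\int_0^1v\,dx .
\]
Since $P'\in L^1$ and $P\ge \bar P/2$ for large $t$ (with $P\ge0$ on the remaining compact interval), Gronwall's inequality bounds $\int v$, $\int\theta$ and $\int u^2$ uniformly in time. The entropy-type functional
\[
\int_0^1\Big(c_v(\theta-\log\theta-1)+R(v-\log v-1)+\tfrac12u^2\Big)dx
\]
then yields, uniformly in $T$,
\[
\int_0^T\!\!\int_0^1\Big(\frac{\mu u_x^2}{v\theta}+\frac{\kappa\theta_x^2}{v\theta^2}\Big)\,dx\,dt\le C.
\]
In this step $P(t)$ is compared with $\bar P$, and the $P'\in L^2$ assumption absorbs the error terms.

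\textbf{Two-sided bounds on $v$.} Uniform upper and lower bounds for $v$ come from Jiang/Kazhikhov-type representation formulas. We integrate \eqref{equ-2} in $x$ from the boundary, using \eqref{equ-5} to replace the boundary stress by $-P(t)$. In the identity \eqref{mu-vthe}, the extra term involving $\mu_\theta=\alpha\tilde\mu\theta^{\alpha-1}$ carries a factor $\alpha$. We bound it by $\alpha C(m,\Theta)$ times dissipation quantities, so that it becomes a small perturbation.

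\textbf{Temperature and higher-order estimates.} Next come the $H^1$ estimates for $v_x$ via \eqref{mu-vthe}, and the $L^2$ estimates for $u_x$, $u_{xx}$, $\theta_x$, $\theta_{xx}$, obtained by multiplying \eqref{equ-2} by $u_{xx}$ and \eqref{internal-energy} by $\theta^{\beta}\theta_t$ (or by $\theta_{xx}$). Here $\beta\ge0$ is arbitrary, but the upper bound of $\theta$ controls $\theta^\beta$. The upper bound for $\theta$ follows from the uniform $L^2_tL^\infty_x$-type control of $\theta_x$, after rewriting $\theta^\beta\theta_x$ through the quantity $K(\theta)=\int^\theta\kappa$. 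The lower bound for $\theta$ follows from a maximum-principle argument applied to $1/\theta$ on \eqref{internal-energy}, together with the dissipation bound.

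\textbf{Main obstacle and large-time limit.} The main obstacle is closing the bootstrap on $\theta$ with constants independent of $T$ while the coupling terms $\alpha\theta^{\alpha-1}\theta_x$ appear throughout. To handle it, we track every constant as $C(m,\Theta)$ and then fix $\Theta$ large first, then choose $\varepsilon_0$ small, so that terms of the form $\alpha C(m,\Theta)$ are at most $1$. The dissipation bounds then give $\int_0^\infty\big(\|(u_x,\theta_x,v_x)\|^2+|\frac d{dt}\|(u_x,\theta_x,v_x)\|^2|\big)dt<\infty$, which forces decay of the gradients. Together with the conserved quantities, namely $\int u=\hat u$, the energy identity defining $\hat\theta$ in \eqref{th1-hat}, and the boundary relation $R\theta/v\to\bar P$, this identifies the limit $(\hat v,\hat u,\hat\theta)$ and proves \eqref{th1-tinft}.
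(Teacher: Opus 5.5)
Your outline has the same architecture as the paper's proof: energy and entropy bounds, a Kazhikhov--Jiang representation of $v$ with an $O(\alpha)$ perturbation, an effective-velocity estimate for $v_x$, higher-order bounds, a continuation argument with $\varepsilon_0$ chosen after the bootstrap constant, and then gradient decay plus the energy identity to identify $(\hat v,\hat u,\hat\theta)$. However, the bootstrap on the upper bound of $\theta$ does not close as you describe it.

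You use $\theta\le 2\Theta$ to control $\theta^\beta$ in the higher-order estimates. For $\beta=0$ you would also need it for the term $\int_0^T\!\int_0^1\theta^{-1}\theta_x^2\,dx\,dt$ that the $v_x$ estimate produces, since the entropy dissipation only controls $\theta^{\beta-2}\theta_x^2$. You then read $\sup\theta$ off those estimates. The resulting bound is some $C(\Theta)$ coming from terms that carry no factor $\alpha$, so arranging $\alpha C(m,\Theta)\le1$ cannot turn $\theta\le2\Theta$ into $\theta\le\Theta$. In addition, $L^2_tL^\infty_x$ control of $\theta_x$ gives no time-uniform bound on $\theta$.

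The missing idea is a bound on $\sup\theta$ that does not presuppose one. The paper obtains it by power counting in $R_T=1+\sup\theta$:
\begin{itemize}
\item the test function $(\theta-2c_2)_+\theta^{-1-\varepsilon}$ gives $\int_0^T\!\int_0^1\theta^{\beta-1-\varepsilon}\theta_x^2\le C_\varepsilon$, hence $\sup_t\|v_x\|_{L^2}^2\le C_\varepsilon R_T^{\varepsilon}$;
\item the multiplier $(\theta^{\beta+2}-c_2^{\beta+2})_+$ combined with the stress estimate gives $\sup_t\|u_x\|_{L^2}^2\le C_\varepsilon R_T^{1+\varepsilon}$;
\item the multiplier $\theta^\beta\theta_t$ gives $\sup_t\|\theta^\beta\theta_x\|_{L^2}^2\le C_\varepsilon R_T^{\beta+2+2\varepsilon}$;
\item finally $\theta^{\beta+3/2}\le C+C\|\theta^\beta\theta_x\|_{L^2}$ at a mean-temperature point yields $R_T^{\beta+3/2}\le C_\varepsilon R_T^{\beta/2+1+\varepsilon}$, so $R_T\le C$ with $C$ independent of $M$ and $T$.
\end{itemize}
Only the terms carrying a factor $\alpha$ are allowed to depend on the bootstrap constant.

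Two further steps also fail as stated.

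First, a bootstrap on pointwise bounds of $\theta$ alone does not control the $\alpha$-perturbation in the representation of $v$. That perturbation is $\alpha\int_s^t\theta^{\alpha-1}\theta_\tau\log v\,d\tau$, which needs $\int_s^t\|\theta_\tau\|_{L^\infty}d\tau$, and nothing in the basic dissipation controls $\theta_t$. The paper therefore puts $\int_0^T\|\theta_t\|_{L^2}^2dt$ and $\int_0^T\sigma\|\theta_{xt}\|_{L^2}^2dt$ into $X(0,T;M)$, together with the $H^1$ norms needed for the $\alpha^2$-term in the $v_x$ estimate. It then bounds the perturbation by $C\alpha M^{3/2}\log M(1+\sqrt{t-s})$, and uses $\int_s^tP\,d\tau\ge\bar P(t-s)/2-C$ to dominate the $\sqrt{t-s}$ growth, giving the exponentially decaying kernel.

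Second, $P$ may vanish on an initial interval. There the energy does not control $\int_0^1v\,dx$, so Gronwall applied to $\frac{d}{dt}E=P'\int_0^1v\,dx$ does not close, and the entropy identity contains the term $(R-P)\int_0^1v\,dx$. The paper handles $[0,T_0]$ by multiplying the integrated momentum equation by $v$.
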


\begin{theorem}\la{tm12}
	Suppose that the initial data $(v_0, u_0, \theta_0)$ satisfies \eqref{th1-ini}, and the outer pressure $P(t)$ satisfying 
	\begin{equation}
        P(t)\geq0\ (t\geq0),\quad
        \lim_{t\to+\infty}P(t)=0,\quad
        P\in C^1[0,+\infty),\quad P'\in L^1\cap L^2(0,+\infty),
		\label{th1-pt11}
	\end{equation}
	and there exists a positive constant $\varepsilon \ll 1$ such that
	\begin{equation}
		\label{th2-pt}
		(1+t)^{\varepsilon} P(t),(1+t)^{3+\varepsilon}\left|P^{\prime}(t)\right|^{2} \in L^{1}(0,+\infty) .
	\end{equation}
Then, for any $\beta\geq0$ and each fixed pressure $P$ satisfying
\eqref{th1-pt11}--\eqref{th2-pt}, there is a number
$0<\varepsilon_0=\varepsilon_0(M_0,\underline v,\underline\theta,
\beta,R,c_v,\tilde\mu,\tilde\kappa;P)<1$, depending only on the stated data and independent of the terminal time, such that problem (\ref{equ-1})--(\ref{mu-kappa}) with
$0\leq\alpha\leq\varepsilon_0$ admits a unique global strong
solution $(v,u,\theta)$. The same global solution satisfies, for
every finite $T>0$,
\begin{equation}\label{th2-vu}
 \left\{\begin{aligned}
 &(v,u,\theta)\in C([0,T];H^1(0,1)),\qquad
       (u,\theta)\in C((0,T];H^2(0,1)),\\
 &v_t\in C((0,T];L^2(0,1))\cap L^2(0,T;H^1(0,1)),\\
 &v_x,u_x,\theta_x,u_t,\theta_t,v_{xt},u_{xx},\theta_{xx}
                          \in L^2((0,1)\times(0,T)).
 \end{aligned}\right.
\end{equation}
The global asymptotic conclusion is
	\begin{equation}
 \left\|\left(\frac{v}{1+t}-A(t),\,
 u-\int_0^1u_0dx-A(t)(x-\tfrac12),\,
 \theta-\left(\frac{\tilde\mu A(t)}R\right)^{1/(1-\alpha)}
                  \right)\right\|_{H^1}\leq C(1+t)^{-\lambda},
		\label{th2-contr}
	\end{equation}
	where $C$ and $\lambda$ are positive constants, and $A$ fulfills
	\begin{equation}
 \begin{aligned}
 c_v\left(\frac{\tilde\mu A(t)}R\right)^{1/(1-\alpha)}+\frac{A(t)^2}{24}
 &=\int_0^1\left(P(0)v_0+c_v\theta_0+\tfrac12u_0^2\right)dx
                  -\frac12\left(\int_0^1u_0dx\right)^2\\
 &\quad+\int_0^tP'(s)\int_0^1v(x,s)dx\,ds,
 \qquad A(t)>0.\label{th2-A}
 \end{aligned}
	\end{equation}
\end{theorem}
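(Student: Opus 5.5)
The proof runs as a continuity (bootstrap) argument in the class \eqref{th2-vu}. The key point is that every a priori bound must be independent of the terminal time $T$ and must hold in normalized variables, because $v$ grows linearly when the limiting pressure vanishes. Local existence in $H^1$ with positive $v,\theta$ comes from a standard linearization and fixed-point argument for the parabolic system \eqref{equ-1}, \eqref{equ-2}, \eqref{internal-energy}. The bootstrap hypothesis is
\[
\frac{1}{2}\underline\theta_T \le \theta \le 2\overline\theta_T,
\]
with bounds to be closed, so that $\theta^\alpha$ stays within $[1-C\alpha,1+C\alpha]$ times the constant coefficient in the relevant range. Every term produced by $\mu_\theta$ in \eqref{mu-vthe} then carries a factor $\alpha$, and choosing $\alpha\le\varepsilon_0$ small absorbs these terms.

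The first step is the basic energy identity. Integrating \eqref{equ-2}--\eqref{equ-3} with the boundary conditions \eqref{equ-5}--\eqref{equ-6} gives
\[
\frac{d}{dt}\int_0^1\left(c_v\theta+\tfrac12u^2\right)dx=-P(t)\frac{d}{dt}\int_0^1v\,dx .
\]
Integrating this in time, with $P'\in L^1$, yields \eqref{th2-A}. Conservation of $\int_0^1 u\,dx$ follows from integrating \eqref{equ-2}, since the boundary stresses are equal. I would combine these with the entropy inequality for $\ln v,\ln\theta$, weighted appropriately. This controls $\int\!\int \mu u_x^2/(v\theta)+\kappa\theta_x^2/(v\theta^2)$, but only with a logarithmic growth in $t$, which is why the self-similar normalization is needed.

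The second step introduces the normalized variables
\[
\tilde v=\frac{v}{1+t},\qquad w=u-\hat u-A(t)\left(x-\tfrac12\right).
\]
Let $\bar\theta(t)=(\tilde\mu A/R)^{1/(1-\alpha)}$. This choice makes the stress $\mu u_x/v-R\theta/v$ vanish to leading order along the expanding profile, which is exactly the profile in \eqref{th2-contr}. Working with $\tilde v$, $w$ and $\theta/\bar\theta$, I would derive pointwise bounds $C^{-1}\le\tilde v\le C$ using the representation of $v$ obtained from \eqref{mu-vthe}, in Jiang/Kazhikhov style. In this representation the $\mu_\theta$ terms are perturbations of size $O(\alpha)$, and $P(t)$ enters through the boundary stress.

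The third step obtains upper and lower bounds for $\theta$. For the upper bound I would use $\theta^{\beta+1}$-type weighted estimates of $\int\theta_x^2$, together with the $L^2(0,T;H^1)$ bound on $u_x$. For the lower bound I would apply a maximum principle to \eqref{internal-energy}. The $H^1$ estimates for $v_x$, $u_x$, $\theta_x$ then follow, with $\int v_x^2$ controlled via \eqref{mu-vthe}, in the standard order: $v_x$ first, then $u_x$, then $\theta_x$.

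The main obstacle is the last step, the decay rate $(1+t)^{-\lambda}$. My plan is to multiply the differential inequality for the normalized energy $E(t)$, which measures the $H^1$ distance in \eqref{th2-contr}, by $(1+t)^{\lambda}$. Each forcing term involving $P$ or $P'$ is then integrable by \eqref{th2-pt}, provided $\lambda<\varepsilon$. I expect to obtain
\[
E'+\frac{c}{1+t}E\le C(1+t)^{-1-\delta}+C\bigl(P+(1+t)^3|P'|^2\bigr),
\]
which after integration gives algebraic decay in physical time. The delicate part is checking that the dissipation coefficient $c/(1+t)$ is not destroyed by the $\alpha$-dependent terms. For this I would choose $\varepsilon_0$ depending on $P$ through the constants in \eqref{th2-pt}. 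Uniqueness follows from a Gronwall estimate on the difference of two solutions.
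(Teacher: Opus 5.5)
There is a genuine gap at the base of your scheme: you never obtain a dissipation that is integrable over the whole logarithmic time axis, and your steps two and three need one. You correctly note that the entropy identity only gives $\int_0^sV\,d\tau\le C+R\log(1+s)$. Normalizing $v$ does not cure this. With vanishing limiting pressure, $u_x$ does not decay but tends to $A>0$. In the variables $t=\log(1+s)$ one has $v_t+v=u_x$, hence $\int_0^1Ru_x/v\,dx=\frac{d}{dt}\int_0^1R\log v\,dx+R$, so the dissipation grows linearly in $t$. Your later steps cannot absorb this. The Kazhikhov-type bound on $v$ closes only through the Gronwall factor $\exp\{C\int_0^T\mathcal V\,dt\}$. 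The temperature oscillation is bounded by $C\mathcal V(t)\max_xv$. Your temperature upper bound uses $\int_0^T\|u_x\|_{H^1}^2dt$, which grows like $A^2T$. So every bound, and with it $M$ and $\varepsilon_0$, would depend on $T$.

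The missing idea is to dissipate the residual $u_x-R\theta/\mu$ instead of $u_x$. Multiply the normalized heat equation by $-\theta^{\alpha-1}$ and add $R/\tilde\mu$ times the momentum equation integrated once in $x$. The terms $\int_0^1\theta^\alpha v^{-1}(u_x-R\theta/\mu)\,dx$ then cancel exactly, giving \eqref{normalized-alpha-entropy} and $\int_0^\infty\mathcal V\,dt\le C$ with $\mathcal V$ as in \eqref{normalized-dissipation}. The stress and temperature estimates of Lemmas~\ref{lm36}--\ref{lm37} are likewise written for $u_x-R\theta/\mu$, or for $u_x-R\theta/\mu+e^tP(e^t-1)v/\mu$ (which vanishes at $x=0,1$), rather than for $u_x$.

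Several specific steps also fail as stated.

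\emph{Lower bound for $\theta$.} A maximum principle for \eqref{internal-energy} cannot give a uniform lower bound. Since $\mu u_x^2-R\theta u_x\ge-R^2\theta^2/(4\mu)$, at a minimum point it only yields $c_v(1/\theta)_s\le R^2/(4\mu v)\le C/(1+s)$, i.e.\ $\min_x\theta\ge c/(1+\log(1+s))$. The paper instead tests with $-\theta^{-2}(\theta^{-1}-c_1^{-1})_+^p$ as in \eqref{inverse-test} and controls the cold region by $\mathcal V$. This gives an $L^{p+1}$ bound with the $p$-independent factor $e^{C\int_0^\infty\mathcal V\,dt}$, and then $p\to\infty$; it again rests on the integrable dissipation.

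\emph{Bootstrap class.} Pointwise bounds on $\theta$ alone do not make the $\mu_\theta$ terms $O(\alpha)$. For instance, the error term in \eqref{normalized-representation} is of size $C\alpha M^2\int_{s_0}^te^{-(t-s)}\|\theta_t\|_{L^\infty}ds$. So $\int\|\theta_t\|_{L^2}^2$, $\int\sigma\|\theta_{xt}\|_{L^2}^2$ and $\sup\|\theta\|_{H^1}$ must belong to the bootstrap class, with a condition like $\alpha(2M)^{5/2}\le\eta_0$.

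\emph{Decay.} Your inequality $E'+\frac{c}{1+t}E\le\cdots$ is asserted, not derived, and the $\alpha$-terms are not the main obstacle. Nonlinear terms such as $(\mu/v)_x$ times the residual and its derivative, or the variable conductivity in the $\theta_x$ energy, are bounded but not small. The mean deviation $\int_0^1v\,dx-A$ is not controlled by the parabolic dissipation at all. The paper proves decay only near a frozen profile $A(t_0)$, via \eqref{local-dissipation}. It reaches that neighbourhood at some $t_0$ by averaging \eqref{profile-square-integral}. It transfers the decay to $v-R\theta/\mu$, $v_x$ and $w$ through the damped equations \eqref{three-damping-equations}. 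It pins the means through the energy identity defining $A$, as in \eqref{profile-square}.
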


\begin{remark}
From a physical point of view, $P(t)$ is nonnegative; see \cite{N88}.
It may vanish on a finite initial interval in Theorem~\ref{tm11}.
For each prescribed pressure, the exponent threshold is chosen once
for all terminal times.
\end{remark}

\begin{remark}
Our result extends the results of Cai--Chen--Peng \cite{C-C-P} and
Cai et al.\ \cite{C-C-P-P}, where $\alpha=0$ and $\beta>0$.
\end{remark}

\begin{remark}
For the existence of global strong solutions, the initial regularity
$H^2\times H^1\times H^1$ in \cite{D25} is relaxed to
$H^1\times H^1\times H^1$.
\end{remark}

\begin{remark}
We consider a bounded interval. For unbounded domains with constant
viscosity $\mu=\tilde\mu$ and $P(t)\equiv R$, see \cite{H-W-Z,L-P-P}.
\end{remark}

We now make some comments on the analysis. The main difficulty is to
obtain uniform upper and lower bounds for $v$ and $\theta$ when
$\mu=\tilde\mu\theta^\alpha$. Following \cite{S-Z-Z}, we take
$\alpha$ sufficiently small to control the terms containing
$\mu_x$ and $\mu_t$. In the case $\bar P>0$, we use
\begin{equation*}
 \frac{\tilde u_t}{\mu}+\frac{R\theta_x}{\mu v}
       -\frac{R\theta v_x}{\mu v^2}
 =\frac{\mu_xu_x}{\mu v}+\left(\frac{v_x}{v}\right)_t.
\end{equation*}
In the case $\bar P=0$, after the change of variables in Section~3,
\begin{equation*}
 \left(\frac{v_x}{v}\right)_t+\frac{R\theta v_x}{\mu v^2}
 =\frac{w_t+w}{\mu}+\frac{R\theta_x}{\mu v}
       -\alpha\theta^{-1}\theta_x\left(1+\frac{w_x}{v}\right).
\end{equation*}
Multiplication by $v_x/v$ gives the estimates in Lemmas~\ref{lm25}
and \ref{lm34}. We then estimate $\theta^\beta\theta_x$ and obtain
the temperature bounds for all $\beta\geq0$. The smallness of
$\alpha$ depends on the initial bounds, the physical coefficients,
$\beta$ and the prescribed pressure. The higher order estimates
complete the continuation argument and the study of large-time behavior.

Sections~2 and~3 prove Theorems~\ref{tm11} and~\ref{tm12}, respectively.

\noindent\textbf{Notation.}
All spatial norms are over $(0,1)$. We write
\[
\|f\|_{L^p}=\|f\|_{L^p(0,1)},\qquad
\|(f_1,\ldots,f_n)\|=\sum_{j=1}^n\|f_j\|.
\]
We also use
$\sigma(t)=\min\{1,t\}$, and
$\tilde u=u-\int_0^1u_0dx$.
The constants $C$ may change from line to line and depend on the initial
 data, the prescribed pressure, $\beta$ and the fixed physical
coefficients, but are independent of the terminal time and of the
small exponent within its selected range. Finite-time bounds that also depend
on $T$ or on the temporary bound $M$ are written as $C(T)$ or $C(T,M)$.

\begin{pro}\label{local-property}
Let the positive initial data satisfy \eqref{th1-ini}, and let
$P\in C^1[0,\infty)$. There exists $T_0>0$ such that
\eqref{equ-1}--\eqref{mu-kappa} has a unique positive strong solution
on $[0,T_0]$ with the finite-time regularity in \eqref{th2-vu}.
The boundary conditions hold for $t>0$. The solution can be continued
while $v$ and $\theta$ stay in positive compact intervals and the
finite-time $H^1$ norms remain bounded.
\end{pro}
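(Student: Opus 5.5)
We prove Proposition~\ref{local-property} by a linearization and contraction (or Schauder fixed point) argument in the low-regularity class $v\in H^1$, $(u,\theta)\in H^1$ with parabolic smoothing. First we remove the inhomogeneous stress boundary condition \eqref{equ-5}. Set $\sigma=\frac{\mu u_x}{v}-\frac{R\theta}{v}+P(t)$. Then \eqref{equ-2} reads $u_t=\sigma_x$ with $\sigma(0,t)=\sigma(1,t)=0$. It is therefore convenient to work with the effective viscous flux $\sigma$, or equivalently with $u$ under the Robin-type condition $\mu u_x=R\theta-Pv$ at $x=0,1$.

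Given $(\bar v,\bar u,\bar\theta)$ in the ball
\[
X_{T,M}=\Big\{\tfrac12\underline v\le \bar v\le C,\ \tfrac12\underline\theta\le\bar\theta\le C,\ \sup_{t\le T}\|(\bar v,\bar u,\bar\theta)\|_{H^1}+\|(\bar u,\bar\theta)\|_{L^2H^2}+\|(\bar u_t,\bar\theta_t)\|_{L^2L^2}\le M\Big\},
\]
we solve the linear problem in two steps. First, take $v=v_0+\int_0^t\bar u_x\,ds$. Second, solve the two uniformly parabolic problems
\[
u_t=\Big(\tilde\mu\bar\theta^\alpha\frac{u_x}{\bar v}\Big)_x-\Big(\frac{R\bar\theta}{\bar v}\Big)_x
\]
with the Robin data, and
\[
c_v\theta_t=\Big(\tilde\kappa\bar\theta^\beta\frac{\theta_x}{\bar v}\Big)_x-\frac{R\bar\theta\bar u_x}{\bar v}+\frac{\tilde\mu\bar\theta^\alpha\bar u_x^2}{\bar v}
\]
with Neumann data \eqref{equ-6}. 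The coefficients $a=\tilde\mu\bar\theta^\alpha/\bar v$ and $b=\tilde\kappa\bar\theta^\beta/\bar v$ lie in $L^\infty(H^1)\subset C(\bar Q_T)$, are bounded above and below, and have $a_x\in L^\infty L^2$. Standard Galerkin energy estimates then give the bounds. Multiplying by $u_t$ or $-u_{xx}$ and using $\|a_xu_x\|\le\|a_x\|\|u_x\|_{L^\infty}\le \epsilon\|u_{xx}\|+C\|u_x\|$, we obtain $u\in L^\infty H^1\cap L^2H^2$ with $u_t\in L^2L^2$. The same holds for $\theta$, where the source $\bar u_x^2\in L^2L^2$ because $\|\bar u_x\|_{L^4}^2\le C\|\bar u\|_{H^1}\|\bar u\|_{H^2}$. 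The Robin boundary term is handled through $\sigma$, together with the trace inequality and $P\in C^1$.

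We next show the map is a self-map for small $T$. Since $\|v-v_0\|_{H^1}\le\int_0^t\|\bar u_x\|_{H^1}\le \sqrt T M$, positivity of $v$ and the bound $\|v\|_{H^1}\le M_0+1$ hold for $T$ small. The $u,\theta$ bounds take the form $C(M_0,\underline v,\underline\theta)+C(M)T^{1/4}$. Lower bounds for $\theta$ follow from $\|\theta-\theta_0\|_{L^\infty}\le C\|\theta-\theta_0\|_{H^1}^{1/2}\|\theta-\theta_0\|^{1/2}$ together with $\|\theta-\theta_0\|\le\sqrt T\|\theta_t\|_{L^2L^2}$, or alternatively from the minimum principle for the linear equation, since the source $\bar u_x^2/\bar v$ carries the good sign only after absorbing the $\bar u_x$ term. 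Choosing $M$ large and $T_0$ small then gives $X_{T_0,M}\to X_{T_0,M}$.

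Contraction is to be proved in the weaker norm $L^\infty(L^2)\cap L^2(H^1)$ for $(u,\theta)$ and $L^\infty L^2$ for $v$; the ball is closed in this norm by weak compactness. I expect the main obstacle here. The differences of the coefficients, $\bar\theta_1^\alpha-\bar\theta_2^\alpha$ and $\bar\theta_1^\beta-\bar\theta_2^\beta$, multiply $u_x$ and $\theta_x$, which are only $L^2H^1$. We will need $\|(\bar\theta_1-\bar\theta_2)u_{x}\|\le\|\delta\bar\theta\|_{L^\infty}\|u_x\|$, using $\|\delta\bar\theta\|_{L^\infty}\le C\|\delta\bar\theta\|^{1/2}\|\delta\bar\theta\|_{H^1}^{1/2}$. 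The quadratic source $\bar u_x^2$ in the $\theta$ equation is treated by testing against $\delta\theta$ and integrating by parts once. This yields a contraction factor $C(M)T^{1/4}$.

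Uniqueness in the full class follows from the same difference estimate. Time continuity in $H^1$ comes from $u\in L^2H^2$ and $u_t\in L^2L^2$ by interpolation. The $C((0,T];H^2)$ regularity is obtained by multiplying the time-differentiated equations by $\sigma(t)u_t$ and $\sigma(t)\theta_t$; this weight is why the boundary conditions are asserted only for $t>0$. The continuation criterion is immediate, because $T_0$ depends only on the positive bounds of $v,\theta$ and on the $H^1$ norms at the restart time.
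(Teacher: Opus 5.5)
Your route (freeze the coefficients, solve two linear parabolic problems, get a self-map for small $T$, contract in $L^\infty L^2\cap L^2H^1$) is the standard iteration that the paper only invokes. As set up, however, the linear step for $u$ fails on your ball.

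An element $\bar v\in X_{T,M}$ is controlled only in $L^\infty(0,T;H^1)$. The embedding you claim, $L^\infty(H^1)\subset C(\bar Q_T)$, is false: $v_0(x)+\epsilon\,\mathrm{sign}\sin(1/t)$ lies in your ball. This matters because of the stress condition. Write $a=\tilde\mu\bar\theta^\alpha/\bar v$ and $h=(R\bar\theta-P\bar v)/(\tilde\mu\bar\theta^\alpha)$. Then the linear problem is $u_t=(aw)_x$ with $w=u_x-h$ and $w=0$ at $x=0,1$.

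Any $u$ with $u\in L^2H^2$ and $u_t\in L^2L^2$ has $u_x(0,\cdot)\in H^{1/4}(0,T)$. So $h(0,\cdot)$, and hence $\bar v(0,\cdot)$, needs fractional time regularity, and $L^\infty(H^1)$ does not give it, since $L^\infty(0,T)\not\subset H^{1/4}(0,T)$. The energy estimates show the same problem. The multipliers $u_t$ and $-u_{xx}$ produce boundary terms pairing $u_t$ with the datum (e.g.\ $[u_th]_0^1$). Since $u_t\in L^2L^2$ has no trace, no trace inequality controls them. The flux variable removes the boundary term: testing by $-w_x$ gives
\[
\frac12\frac{d}{dt}\|w\|_{L^2}^2+\int_0^1aw_x^2\,dx=-\int_0^1a_xww_x\,dx-\int_0^1h_tw\,dx .
\]
But this needs $h_t$, i.e.\ $\bar v_t$, in $L^2L^2$, which your ball does not control.

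The repair is easy, because your own map produces the missing regularity. Either add $\|\bar v_t\|_{L^2L^2}\le M$ to $X_{T,M}$, or iterate only on $(\bar u,\bar\theta)$ with $\bar v:=v_0+\int_0^t\bar u_x\,ds$. In the first case the image satisfies $\|v_t\|_{L^2L^2}=\|\bar u_x\|_{L^2L^2}\le\sqrt T M$, so the ball is still mapped into itself. Then $L^\infty H^1\cap H^1L^2\subset C(\bar Q_T)$ is true, and the $w$-estimate closes with the small term $\sqrt T\|h_t\|_{L^2L^2}\sup_t\|w\|_{L^2}$. Only $\|w(0)\|_{L^2}$ enters, which is why the $H^1$ data need no compatibility.

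Two smaller corrections:

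\emph{Contraction.} You measure $\delta\bar v$ only in $L^\infty L^2$. So the part of $\delta a$ (and $\delta b$) coming from $\delta\bar v$ must be bounded by $\|\delta\bar v\|_{L^2}\|u_{2x}\|_{L^\infty}$ (resp.\ $\|\theta_{2x}\|_{L^\infty}$), using $\int_0^T\|u_{2x}\|_{L^\infty}^2dt\le C(M)\sqrt T$. Putting $L^\infty$ on $\delta\bar v$ does not work, since $\|\delta\bar v\|_{L^\infty}$ is only H\"older-$\frac12$ in $\|\delta\bar v\|_{L^2}$.

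\emph{Smallness in $T$.} The bound $\|\bar u_x\|_{L^4}^2\le C\|\bar u\|_{H^1}\|\bar u\|_{H^2}$ gives no smallness in $T$. Use $\|\bar u_x\|_{L^4}^4\le C\|\bar u_x\|_{L^2}^3\|\bar u_x\|_{H^1}$ instead, which gives $\int_0^T\|\bar u_x\|_{L^4}^4dt\le CM^4\sqrt T$.

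With these changes your argument becomes a valid version of the iteration the paper cites.
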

The local solution is obtained by the usual iteration and approximation
argument for one-dimensional viscous heat-conducting flow; see
\cite{A-K-M,D25} for the method. The estimates below are first proved for
smooth positive solutions and then passed to the strong solution by
approximation. No initial derivative trace is imposed on the $H^1$ data.
\section{Proof of Theorem 1.1}
For a finite $T>0$, let $(v,u,\theta)\in X(0,T;M)$, where
\begin{equation}\label{space}
\begin{aligned}
X(0,T;M)=\bigg\{&(v,u,\theta)\in C([0,T];H^1):\quad
 M^{-1}\leq v,\theta\leq M,\\
&\int_0^T\|(v_x,u_x,u_{xx},\theta_x,\theta_{xx},\theta_t)\|_{L^2}^2dt\leq M,\\
&\sup_{0\leq t\leq T}\|(v,u,\theta)(t)\|_{H^1}^2+
 \int_0^T\sigma(t)\|\theta_{xt}\|_{L^2}^2dt\leq M\bigg\}.
\end{aligned}
\end{equation}
The momentum equation and $v_{xt}=u_{xx}$ give
\begin{align*}
\int_0^T\|(u_t,v_{xt})\|_{L^2}^2dt
&\leq C(M)\int_0^T\left[\|u_{xx}\|_{L^2}^2+\|\theta_x\|_{L^2}^2
 +\|v_x\|_{L^2}^2\right.\\
&\hspace{26mm}\left.+(\|v_x\|_{L^2}^2+\|\theta_x\|_{L^2}^2)
                                    \|u_x\|_{L^\infty}^2\right]dt
 \leq C(M).
\end{align*}
In this section we assume
\begin{equation}\label{positive-smallness}
0\leq\alpha\leq\frac12,\qquad
\alpha M^{5/2}\leq1,\qquad \alpha M^{\beta+2}\leq1.
\end{equation}
In particular,
\begin{equation}\label{mu-range}
 |\alpha\log\theta|\leq\alpha\log M\leq1/e<\log2,
 \qquad \frac12\leq\theta^\alpha\leq2.
\end{equation}

\subsection{Lower order estimates}\label{21}
\begin{lemma}\label{lm21}
There exist $C>0$ and $0<c_1<c_2$, with $c_2\geq1$, such that
\begin{align}
&\sup_{0\leq t\leq T}\int_0^1
 \left[\frac12\tilde u^2+c_v(\theta-\log\theta-1)
                  +R(v-\log v-1)\right]dx
       +\int_0^T V(t)dt\leq C,\label{positive-entropy}\\
&c_1\leq\int_0^1v(x,t)dx,\ \int_0^1\theta(x,t)dx\leq c_2,
 \label{positive-means}
\end{align}
where the lower and upper bounds in \eqref{positive-means} apply to both means and
\begin{equation}\label{positive-dissipation}
V(t)=\int_0^1\left(\frac{\mu u_x^2}{v\theta}
                  +\frac{\kappa\theta_x^2}{v\theta^2}\right)dx.
\end{equation}
\end{lemma}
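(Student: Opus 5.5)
We will derive the basic energy and entropy identities for the outer pressure problem and integrate them in time, keeping track of the work done by $P(t)$. The constants must be independent of $T$ and $M$, so everything is driven by the initial data and the integrability of $P'$.

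\emph{Energy identity.} Integrate \eqref{equ-3} over $(0,1)$ and use \eqref{equ-5}--\eqref{equ-6}. The boundary flux is $\big[(\mu u_x/v - R\theta/v)u\big]_0^1=-P(t)(u(1,t)-u(0,t))$. By \eqref{equ-1} this equals $-P(t)\frac{d}{dt}\int_0^1 v\,dx$. Hence
\[
\frac{d}{dt}\int_0^1\Big(c_v\theta+\tfrac12u^2+P(t)v\Big)dx=P'(t)\int_0^1 v\,dx .
\]
Integrating \eqref{equ-2} gives $\frac{d}{dt}\int_0^1u\,dx=-P+P=0$, so $\int_0^1u\,dx=\hat u$ and $\int\frac12\tilde u^2=\int\frac12u^2-\frac12\hat u^2$. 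Set $E(t)=\int_0^1(c_v\theta+\frac12\tilde u^2+P(t)v)\,dx$. Then $E'(t)\le|P'(t)|\int_0^1 v\,dx$.

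\emph{Bounds on the means.} Since $P\to\bar P>0$ and $P\ge0$, there is $t_1$ (depending only on $P$) with $P(t)\ge\bar P/2$ for $t\ge t_1$. For $t\ge t_1$ we get $\int v\le 2E/\bar P$. On $[0,t_1]$ we integrate \eqref{equ-1} in time, $\int_0^1 v(t)=\int v_0+\int_0^t(u(1)-u(0))$, and control the right side via $\int_0^1 \frac12 \tilde u^2$. A cleaner route is to combine the momentum balance with the energy: $\frac{d}{dt}\int_0^1 v=u(1)-u(0)\le 2\|\tilde u\|_{L^\infty}$. This is not bounded a priori, so instead I would use the identity $\frac{d}{dt}\int_0^1 x\tilde u\,dx$ to express $u(1)-u(0)$ as a boundary stress term. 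Then $\int v$ can be bounded on $[0,t_1]$ by $C(1+t_1)\sup E$.

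Gronwall with $P'\in L^1$ then gives $E(t)\le C$ uniformly, so $\int\theta\le c_2$ and $\int v\le c_2$. The lower bounds follow from Jensen once the entropy bound is available: $\int(\theta-\log\theta-1)\le C$ forces $\int\theta\ge c_1$ since $\log$ is concave. The same argument applies to $v$.

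\emph{Entropy.} Divide \eqref{internal-energy} by $\theta$ and integrate, using $\theta_x=0$ at the boundary:
\[
\frac{d}{dt}\int_0^1(c_v\log\theta+R\log v)\,dx=V(t).
\]
Combining this with the energy identity,
\[
\frac{d}{dt}\int_0^1\Big[\tfrac12\tilde u^2+c_v(\theta-\log\theta-1)+R(v-\log v-1)\Big]dx+V
=P'\!\int v+\frac{d}{dt}\int (R-P(t))v .
\]
The last term is the delicate one. I would instead use the weight $P$: replace $R(v-\log v-1)$ by $P(t)v-R\log v$ in the functional. This is bounded below by $R\log(P/R)$-type constants for $t\ge t_1$, and the equivalence with the stated functional is up to constants depending on $\bar P$.

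Integrating in time and using $P'\in L^1$ and $\sup\int v\le C$ gives \eqref{positive-entropy}. The main obstacle is the initial interval where $P$ may vanish: there the $v$-part of the functional is not coercive. I would handle it by the finite-time bound on $[0,t_1]$, since $t_1$ depends only on $P$.
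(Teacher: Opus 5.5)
There is a genuine gap, and it sits exactly where you place the main obstacle: bounding $\int_0^1v\,dx$ on $[0,t_1]$. On this interval $P$ may vanish, so the energy gives no control of the volume. Your energy identity, the entropy identity and the tail functional $Pv-R\log v$ for $t\ge t_1$ are fine. That functional differs from the paper's $Pv-R-R\log(Pv/R)$ only by a term whose time derivative $RP'/P$ is integrable there. The Jensen argument for the means also works.

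The identity you propose does not bound the volume. Since $u_t=\bigl(\frac{\mu u_x-R\theta}{v}\bigr)_x$ and the stress equals $-P(t)$ at $x=1$, integration by parts gives
\[
\frac{d}{dt}\int_0^1x\tilde u\,dx=-P(t)-\int_0^1\frac{\mu u_x-R\theta}{v}\,dx .
\]
This controls only the spatial mean of the stress, a $\theta^\alpha$-weighted mean of $u_x/v=(\log v)_t$. The quantity $u(1)-u(0)=\int_0^1u_x\,dx=\frac{d}{dt}\int_0^1v\,dx$ never appears, so no bound of the form $\int_0^1v\le C(1+t_1)\sup E$ follows. Also, $\sup E$ itself depends on $\int v$ through $E'\le|P'|\int v$, so the two bounds have to be closed jointly.

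What is needed is the pointwise integrated momentum equation $\bigl(\int_0^x\tilde u\,dy\bigr)_t=\frac{\mu u_x-R\theta}{v}+P$, multiplied by $v(x,t)$ \emph{before} integrating in $x$.
\begin{itemize}
\item This turns $\mu u_x=\tilde\mu\theta^\alpha v_t$ into $\tilde\mu(\theta^\alpha v)_t-\alpha\tilde\mu\theta^{\alpha-1}\theta_tv$.
\item The term $v\,(\int_0^x\tilde u)_t$ is integrated by parts in time using $v_t=\tilde u_x$ and $\int_0^1\tilde u\,dx=0$. This produces $\int_0^t\|\tilde u\|_{L^2}^2$ plus terms bounded by $\eta\|\tilde u(t)\|_{L^2}^2+C(1+\int_0^t\|\tilde u\|_{L^2}^2)$.
\item The result is \eqref{volume-work} and then \eqref{early-volume}, which closes with the energy identity by Gronwall on $[0,T_0]$.
\end{itemize}

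The $\alpha$-error term is not controlled by energy or entropy. The paper bounds it by $C\alpha M^{5/2}\sqrt{T_0}$, using three ingredients:
\begin{itemize}
\item $M^{-1}\le v,\theta\le M$ from $X(0,T;M)$;
\item $\int_0^T\|\theta_t\|_{L^2}^2\le M$ from the same class;
\item the smallness condition $\alpha M^{5/2}\le1$.
\end{itemize}
So your statement that the constants are driven only by the initial data and $P'$ is not accurate for this lemma: the bootstrap hypothesis and the smallness of $\alpha$ already enter here.

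Once $\sup_{[0,T_0]}\int_0^1v\,dx$ is bounded, the rest goes through as in the paper. Your combined entropy identity, integrated in time, is \eqref{early-entropy}, and your tail argument then finishes the proof.
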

\begin{proof}
Integrating momentum and total energy yields
\begin{align}
&\int_0^1u(x,t)dx=\int_0^1u_0dx,\qquad
 \left(\int_0^x\tilde u(y,t)dy\right)_t
       =\frac{\mu u_x-R\theta}{v}+P(t),\label{integrated-momentum}\\
&\int_0^1\left(c_v\theta+\frac12\tilde u^2+P(t)v\right)dx\notag\\
 &=\int_0^1\left(c_v\theta_0+\frac12\tilde u_0^2+P(0)v_0\right)dx
   +\int_0^tP'(s)\int_0^1v(x,s)dx\,ds.\label{energy-work}
\end{align}
Choose $T_0\geq1$ so that $P(t)\geq\bar P/2$ for $t\geq T_0$.
For $t\leq\min\{T_0,T\}$, multiplication of
\eqref{integrated-momentum} by $v$ gives
\begin{align}
\tilde\mu\int_0^1\theta^\alpha v(x,t)dx
={}&\tilde\mu\int_0^1\theta_0^\alpha v_0dx
 -\int_0^1\left(\int_0^t\tilde u(x,s)ds\right)\tilde u(x,t)dx
 +\int_0^t\|\tilde u(s)\|_{L^2}^2ds\notag\\
&+\int_0^1v_0(x)\int_0^x(\tilde u(y,t)-\tilde u_0(y))dy\,dx
 +\int_0^t\int_0^1(R\theta-Pv)dxds\notag\\
&+\alpha\tilde\mu\int_0^t\int_0^1
                  \theta^{\alpha-1}\theta_s v\,dxds.\label{volume-work}
\end{align}
Here $\int_0^x\tilde u\,dy=0$ at $x=0,1$. For every $\eta>0$,
\begin{align*}
&\left|\int_0^1\left(\int_0^t\tilde u\,ds\right)\tilde u(t)dx\right|
 +\left|\int_0^1v_0\int_0^x\tilde u(y,t)dy\,dx\right|\\
&\quad\leq\left[\sqrt{T_0}\left(\int_0^t\|\tilde u\|_{L^2}^2ds\right)^{1/2}
                         +\|v_0\|_{L^1}\right]\|\tilde u(t)\|_{L^2}\\
&\quad\leq\eta\|\tilde u(t)\|_{L^2}^2
              +C_{T_0,\eta}\left(1+\int_0^t\|\tilde u\|_{L^2}^2ds\right),\\
&\alpha\tilde\mu\int_0^t\int_0^1
                       \theta^{\alpha-1}|\theta_s|v\,dxds
 \leq C\alpha M^2\sqrt{T_0}\left(\int_0^t\|\theta_s\|_{L^2}^2ds\right)^{1/2}
 \leq C\sqrt{T_0}.
\end{align*}
Consequently,
\begin{align}
\int_0^1v(t)dx
&\leq\eta\int_0^1(c_v\theta+\tfrac12\tilde u^2)(t)dx+C_{T_0,\eta}\notag\\
 &\quad+C_{T_0,\eta}\int_0^t\int_0^1(c_v\theta+\tfrac12\tilde u^2+v)dxds,
 \label{early-volume}\\
\int_0^1(c_v\theta+\tfrac12\tilde u^2+v)(t)dx
&\leq C_{T_0}+C_{T_0}\int_0^t(1+|P'(s)|)
                 \int_0^1(c_v\theta+\tfrac12\tilde u^2+v)dxds\notag\\
&\quad\leq C_{T_0}\exp\left\{C_{T_0}\left(T_0+\int_0^{T_0}|P'|ds\right)\right\}.
\label{early-energy}
\end{align}
In the second inequality choose $\eta$ with
$\eta(1+\|P\|_{L^\infty(0,T_0)})\leq1/2$ and use \eqref{energy-work}.

Dividing \eqref{internal-energy} by $\theta$ gives
\begin{align}
&\frac d{dt}\int_0^1(c_v\log\theta+R\log v)dx=V(t),\label{entropy-identity}\\
&\int_0^1\left[\tfrac12\tilde u^2+c_v(\theta-\log\theta-1)
                         +R(v-\log v-1)\right](t)dx+\int_0^tV(s)ds\notag\\
&=\int_0^1\left[\tfrac12\tilde u_0^2+c_v(\theta_0-\log\theta_0-1)
                         +R(v_0-\log v_0-1)\right]dx\notag\\
&\quad +(R-P(t))\int_0^1v(t)dx-(R-P(0))\int_0^1v_0dx
            +\int_0^tP'(s)\int_0^1v(s)dx\,ds\leq C_{T_0}.\label{early-entropy}
\end{align}
For $t\geq T_0$, subtracting \eqref{entropy-identity} from
\eqref{energy-work} differentiated in time yields
\begin{align}
&\frac d{dt}\int_0^1\left[\tfrac12\tilde u^2+c_v(\theta-\log\theta-1)
  +Pv-R-R\log\frac{Pv}{R}\right]dx+V(t)\notag\\
&\qquad=P'(t)\left(\int_0^1v\,dx-\frac R{P(t)}\right),\label{tail-entropy}\\
&\int_0^1v\,dx\leq\frac4{\bar P}\int_0^1
                      \left(Pv-R-R\log\frac{Pv}{R}+R\log2\right)dx.
\notag
\end{align}
Thus, for a fixed $C>0$,
\begin{align*}
&\int_0^1\left[\tfrac12\tilde u^2+c_v(\theta-\log\theta-1)
  +Pv-R-R\log\frac{Pv}{R}\right](t)dx+\int_{T_0}^tV(s)ds+C\\
&\quad\leq\left(C+C_{T_0}+|P(T_0)-R|\int_0^1v(T_0)dx+
                 R\left|\log\frac{P(T_0)}R\right|\right)\\
&\qquad\times\exp\left\{\frac4{\bar P}\int_{T_0}^t|P'(s)|ds\right\}\leq C.
\end{align*}
Since $\bar P/2\leq P\leq P(0)+\|P'\|_{L^1}$ on this interval,
\eqref{positive-entropy} follows from \eqref{early-entropy}.
Finally,
\begin{align*}
&z-1-\log z\geq\max\{z/2-\log2,-1-\log z\},\qquad z>0,\\
&\int_0^1\theta dx-1-\log\int_0^1\theta dx
 \leq\int_0^1(\theta-1-\log\theta)dx\leq C,\\
&\int_0^1v dx-1-\log\int_0^1v dx
 \leq\int_0^1(v-1-\log v)dx\leq C,
\end{align*}
which proves \eqref{positive-means}.
\end{proof}

\begin{lemma}\label{lm22}
The specific volume admits the representation
\begin{equation*}
v(x,t)=\frac1{\mathcal A(x,t)\mathcal B(x,t)}
 +\frac R{\tilde\mu}\int_0^t
 \frac{\mathcal A(x,s)\mathcal B(x,s)}{\mathcal A(x,t)\mathcal B(x,t)}
                         \theta(x,s)ds,
\end{equation*}
where
\begin{align}
\mathcal A(x,t)&=v_0(x)^{-\theta_0(x)^\alpha}
 v(x,t)^{\theta(x,t)^\alpha-1}
 \exp\left\{\frac1{\tilde\mu}\int_0^x(u_0-u)(y,t)dy\right\},\label{A}\\
\mathcal B(x,t)&=\exp\left\{\frac1{\tilde\mu}\int_0^tP(s)ds
 -\alpha\int_0^t(\theta^{\alpha-1}\theta_s\log v)(x,s)ds\right\}.
\label{B}
\end{align}
\end{lemma}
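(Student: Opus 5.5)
The representation will come from rewriting the momentum equation as a linear ODE in time for $v$ at each fixed $x$, and then integrating that ODE explicitly.

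\textbf{Step 1: integrate the momentum equation in space.} Integrate \eqref{equ-2} over $(0,x)$ and use the boundary condition \eqref{equ-5} at $x=0$. This gives
\[
\frac{\mu u_x}{v}=\frac{R\theta}{v}-P(t)+\Big(\int_0^x u\,dy\Big)_t .
\]
This is the same as \eqref{integrated-momentum}, since $\int_0^x\tilde u\,dy$ and $\int_0^x u\,dy$ differ by $x\int_0^1u_0dx$, which is constant in time.

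\textbf{Step 2: write $\mu u_x/v$ as a time derivative.} Because $u_x=v_t$, we have
\[
\frac{\mu u_x}{v}=\tilde\mu\theta^\alpha(\log v)_t
=\tilde\mu\big(\theta^\alpha\log v\big)_t-\alpha\tilde\mu\,\theta^{\alpha-1}\theta_t\log v .
\]
Integrate this in time from $0$ to $t$ and divide by $\tilde\mu$. With the notation
\[
F(x,t)=\theta^\alpha\log v-\theta_0^\alpha\log v_0-\frac1{\tilde\mu}\int_0^x(u-u_0)\,dy
-\alpha\int_0^t\theta^{\alpha-1}\theta_s\log v\,ds+\frac1{\tilde\mu}\int_0^tP\,ds ,
\]
Step 1 becomes
\[
F(x,t)=\frac R{\tilde\mu}\int_0^t\frac{\theta}{v}\,ds .
\]

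\textbf{Step 3: identify the exponential.} Compare $F$ with \eqref{A}--\eqref{B}. Since $\theta^\alpha\log v=(\theta^\alpha-1)\log v+\log v$, the definitions give exactly
\[
e^{F}=v\,\mathcal A\mathcal B .
\]
Step 2 therefore reads
\[
v\mathcal A\mathcal B=\exp\Big\{\frac R{\tilde\mu}\int_0^t\frac\theta v\,ds\Big\}.
\]

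\textbf{Step 4: solve the resulting linear ODE.} Set $Y(t)=\exp\{\frac R{\tilde\mu}\int_0^t\frac\theta v\,ds\}$. Then
\[
Y_t=\frac R{\tilde\mu}\frac{\theta}{v}\,Y=\frac R{\tilde\mu}\,\theta\,\mathcal A\mathcal B ,
\]
where the second equality uses $Y/v=\mathcal A\mathcal B$ from Step 3. At $t=0$ we have $\mathcal A=v_0^{-1}$ and $\mathcal B=1$, so $Y(0)=1$. Hence
\[
Y(t)=1+\frac R{\tilde\mu}\int_0^t\theta\mathcal A\mathcal B\,ds .
\]
Dividing $v=Y/(\mathcal A\mathcal B)$ by this expression for $Y$ gives the stated formula for $v$.

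\textbf{Main obstacle: justifying the calculus for strong solutions.} For a strong solution one must make sense of $\int_0^t\theta^{\alpha-1}\theta_s\log v\,ds$ and of the product-rule step in Step 2. In $X(0,T;M)$ we have $\theta_t\in L^2$ and $v,\theta\in[M^{-1},M]$, so the integrand is integrable and $\mathcal B$ is absolutely continuous in $t$ for a.e.\ $x$. I would carry out Steps 1--4 for smooth approximate solutions and then pass to the limit, in line with the approximation remark after Proposition~\ref{local-property}.
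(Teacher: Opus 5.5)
Your proposal is correct and follows essentially the same route as the paper. The paper likewise integrates the momentum equation over $(0,x)$ using the boundary condition and writes $\mu u_x/v=\tilde\mu(\theta^\alpha\log v)_t-\alpha\tilde\mu\theta^{\alpha-1}\theta_t\log v$. It then integrates in time to obtain $v\mathcal A\mathcal B=\exp\{\frac R{\tilde\mu}\int_0^t\theta/v\,ds\}$ and solves the resulting linear ODE for that exponential. Your verification that $Y(0)=1$ and your note on justifying the calculus for strong solutions (via $\theta_t\in L^2$, or by smooth approximation) supply details that the paper leaves implicit.
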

\begin{proof}
Integrating momentum and then differentiating the exponential gives
\begin{align}
&\tilde\mu(\theta^\alpha\log v)_t
 =\left(\int_0^x(u-u_0)dy\right)_t+\frac{R\theta}{v}-P
             +\alpha\tilde\mu\theta^{\alpha-1}\theta_t\log v,\notag\\
&v(x,t)=\frac1{\mathcal A(x,t)\mathcal B(x,t)}
 \exp\left\{\frac R{\tilde\mu}\int_0^t\frac{\theta}{v}(x,s)ds\right\}
 =\frac1{\mathcal A(x,t)\mathcal B(x,t)}\notag\\
&\hspace{32mm}+\frac R{\tilde\mu}\int_0^t
 \frac{\mathcal A(x,s)\mathcal B(x,s)}{\mathcal A(x,t)\mathcal B(x,t)}
                         \theta(x,s)ds.\label{volume-representation}
\end{align}
\end{proof}

\begin{lemma}\label{lm23}
There exists $C>1$ such that
\[
C^{-1}\leq v(x,t)\leq C,\qquad (x,t)\in[0,1]\times[0,T].
\]
\end{lemma}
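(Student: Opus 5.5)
The plan is to use the representation of Lemma~\ref{lm22} and to bound $\mathcal A$ and $\mathcal B$ from above and below, uniformly in $T$ and independently of $M$ once \eqref{positive-smallness} holds. The key point is that $\mathcal B(x,s)/\mathcal B(x,t)$ contains the factor $\exp\{-\tilde\mu^{-1}\int_s^tP\}$, which decays like $e^{-\bar P(t-s)/(2\tilde\mu)}$ for $s\geq T_0$. Combined with the bounded mean of $\theta$ from \eqref{positive-means}, this should give both bounds.

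\emph{Step 1: bounds on $\mathcal A$.} By \eqref{positive-entropy}, $\|\tilde u(t)\|_{L^2}\leq C$, so $|\int_0^x(u_0-u)dy|\leq C$. The factor $v_0^{-\theta_0^\alpha}$ is controlled by $\underline v$ and $M_0$, because $\frac12\leq\theta_0^\alpha\leq2$ and $v_0\leq CM_0$. The delicate factor is $v^{\theta^\alpha-1}=\exp\{(\theta^\alpha-1)\log v\}$. Here $|\theta^\alpha-1|\leq C\alpha\log M$ and $|\log v|\leq\log M$ in $X(0,T;M)$, so the exponent is bounded by $C\alpha(\log M)^2\leq C\alpha M^{5/2}\leq C$ by \eqref{positive-smallness}. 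This gives $C^{-1}\leq\mathcal A\leq C$.

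\emph{Step 2: bounds on $\mathcal B(x,s)/\mathcal B(x,t)$.} We need
\[
\frac{\mathcal B(x,s)}{\mathcal B(x,t)}=\exp\Big\{-\frac1{\tilde\mu}\int_s^tP\,d\tau+\alpha\int_s^t\theta^{\alpha-1}\theta_\tau\log v\,d\tau\Big\}.
\]
The second term is the main obstacle, since it must be bounded uniformly in $t$ and not merely by $\sqrt{t}$. My first attempt is to write $\theta^{\alpha-1}\theta_\tau=\alpha^{-1}(\theta^\alpha)_\tau$ and integrate by parts in time:
\[
\alpha\int_s^t\theta^{\alpha-1}\theta_\tau\log v=\big[(\theta^\alpha-1)\log v\big]_s^t-\int_s^t(\theta^\alpha-1)\frac{u_x}{v}\,d\tau,
\]
where $\theta^\alpha-1$ replaces $\theta^\alpha$ as the antiderivative to gain the factor $\alpha$. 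The boundary term is bounded by Step 1. For the remaining integral, $|\theta^\alpha-1|\leq C\alpha|\log\theta|\leq C\alpha\log M$ and $|u_x|/v\leq M^{2}\sqrt{\mu u_x^2/(v\theta)}$, with $\mu\geq\tilde\mu/2$. Cauchy--Schwarz then bounds it by
\[
C\alpha M^{2}\log M\,(t-s)^{1/2}\Big(\int_s^tV\Big)^{1/2}\leq \delta(t-s)+C_\delta\,\alpha^2M^{5}\int_0^TV,
\]
using \eqref{positive-entropy}, where $\alpha^2M^5\leq1$ by \eqref{positive-smallness}. Choosing $\delta=\bar P/(4\tilde\mu)$, the $\delta(t-s)$ term is absorbed by the pressure exponent when $s\geq T_0$. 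For $s,t\leq T_0$ everything is trivially bounded, since $P\geq0$ and $T_0$ is fixed. Therefore
\[
C^{-1}e^{-C(t-s)}\leq\frac{\mathcal B(x,s)}{\mathcal B(x,t)}\leq Ce^{-c(t-s)}\qquad\text{with } c=\bar P/(4\tilde\mu)>0.
\]
The crude lower bound follows because $P$ is bounded and the $\alpha$-term is at most $\delta(t-s)+C$.

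\emph{Step 3: upper bound for $v$.} From Lemma~\ref{lm22}, $v\leq C+C\int_0^te^{-c(t-s)}\theta(x,s)ds$. Pointwise control of $\theta$ comes from the standard estimate
\[
|\theta^{1/2}(x,s)-(\textstyle\int_0^1\theta)^{1/2}|\leq C\Big(\int_0^1\frac{\theta_x^2}{\theta^2}\Big)^{1/2}\|\theta\|_{L^1}^{1/2}\leq C\,V(s)^{1/2}\max v^{1/2}(s),
\]
valid with $\kappa\geq\tilde\kappa\theta^\beta$ whenever $\beta=0$ or $\theta\geq1$. For $\theta<1$ the bound $\theta\leq1$ is trivial, so it suffices to work on the region where $\theta>1$. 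This yields $\theta(x,s)\leq C+CV(s)\max_x v(\cdot,s)$. Setting $m(t)=\max_{[0,1]\times[0,t]}v$, we get $m(t)\leq C+C\int_0^te^{-c(t-s)}V(s)m(s)ds$, and Gronwall with $\int_0^\infty V\leq C$ gives $v\leq C$.

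\emph{Step 4: lower bound for $v$.} Drop the first term of the representation and use the crude lower bound on $\mathcal B(x,s)/\mathcal B(x,t)$. This gives $v(x,t)\geq C^{-1}\int_{t-1}^te^{-C(t-s)}\theta(x,s)ds$ for $t\geq1$. Since $\int_0^1\theta\geq c_1$ by \eqref{positive-means} and $v\leq C$, the same oscillation estimate gives $\theta(x,s)\geq c_1/4-CV(s)$. Integrating over $s\in[t-1,t]$ and using $\int_{t-1}^tV\to0$ gives a positive lower bound for large $t$. On any bounded time interval, including $t\leq1$, the first term $1/(\mathcal A\mathcal B)$ is bounded below by Steps 1--2. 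Together these give $v\geq C^{-1}$.
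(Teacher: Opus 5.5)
Your overall architecture is the paper's: the representation of Lemma~\ref{lm22}, two-sided kernel bounds, temperature oscillation plus Gronwall for the upper bound, and the positive temperature source for the lower bound. Steps 1 and 3 are fine. Step 2, however, has a genuine gap. The time integration by parts is correct. But the remaining term $\int_s^t(\theta^\alpha-1)\frac{u_x}{v}(x,\tau)\,d\tau$ is evaluated at a fixed $x$, because the representation is pointwise in $x$. Cauchy--Schwarz in $\tau$ gives $(t-s)^{1/2}\left(\int_s^t\frac{\mu u_x^2}{v\theta}(x,\tau)\,d\tau\right)^{1/2}$, and this is not bounded by $\left(\int_s^tV\,d\tau\right)^{1/2}$. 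The quantity $V$ is an integral over $x\in[0,1]$ and says nothing about $u_x$ at a given point.

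To control $\int_s^t|u_x(x,\tau)|\,d\tau$ uniformly in $x$ you need derivative information from the bootstrap class. For example, combine $\|u_x\|_{L^\infty}\le\|u_x\|_{L^2}+\|u_{xx}\|_{L^2}$ with $\int_0^T\|(u_x,u_{xx})\|_{L^2}^2dt\le M$ from $X(0,T;M)$. This gives the bound $C\alpha M^{3/2}\log M\,\sqrt{t-s}\le C\sqrt{t-s}$ under \eqref{positive-smallness}. Growth like $\sqrt{t-s}$ is harmless, since it is absorbed by $\frac{\bar P}{2\tilde\mu}(t-s)$ plus a constant, so you did not need a bound uniform in $t-s$. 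The paper argues the same way without integrating by parts: it bounds $\alpha\int_s^t\theta^{\alpha-1}\theta_\tau\log v\,d\tau$ through $\int_s^t\|\theta_\tau\|_{L^\infty}d\tau\le C\sqrt M(1+\sqrt{t-s})$. Once repaired, your integration by parts is a valid alternative. It uses only $\int\|u_x\|_{H^1}^2$ rather than the $\sigma$-weighted bound on $\theta_{xt}$.

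Step 4 has a uniformity gap. With a window of length $1$, the bound $\int_{t-1}^t\theta(x,s)\,ds\ge c_1/4-C\int_{t-1}^tV\,ds$ may have a nonpositive right-hand side at arbitrarily late times. The estimate $\int_0^TV\,dt\le C$ controls the total dissipation but not where in $[0,T]$ it concentrates. So ``$\int_{t-1}^tV\to0$'' yields no threshold time independent of the solution and of $T$, while the lower bound from the initial term decays like $e^{-Ct}$.

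The paper instead fixes a window length $L\ge1$ with $cL-C\int_0^TV\,ds\ge cL/2$. This gives $v\ge C^{-1}e^{-CL}cL/2$ for $t\ge L$ and $v\ge C^{-1}e^{-CL}\underline v$ for $t\le L$, with constants independent of $T$ and $M$. Also, for the lower oscillation bound when $\beta>0$ you must truncate below the mean, e.g.\ with $(\sqrt\theta-\sqrt{c_1/2})_+$ as in \eqref{temperature-oscillation}. Truncating at level $1$ does not work, since $c_1$ may be smaller than $1$.
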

\begin{proof}
{\it Step 1.} By \eqref{positive-smallness} and \eqref{positive-entropy},
\begin{align*}
&|(\theta^\alpha-1)\log v|\leq2\alpha(\log M)^2\leq C,
 \qquad C^{-1}\leq\mathcal A\leq C,\\
&\int_s^t\|\theta_t(\tau)\|_{L^\infty} d\tau
 \leq\sqrt{M(t-s)}+
 \sqrt{2M}\left(\int_s^t\sigma(\tau)^{-1/2}d\tau\right)^{1/2}
 \leq C\sqrt M(1+\sqrt{t-s}),\\
&\left|\alpha\int_s^t\theta^{\alpha-1}\theta_\tau\log v\,d\tau\right|
 \leq C\alpha M^{3/2}\log M(1+\sqrt{t-s})
 \leq C(1+\sqrt{t-s}),\\
&\int_s^tP(\tau)d\tau\geq\frac{\bar P}2(t-s)-C_{T_0},
 \qquad \int_s^tP(\tau)d\tau\leq\|P\|_{L^\infty}(t-s).
\end{align*}
Hence, with fixed positive $c,C$,
\begin{equation}\label{positive-kernel}
C^{-1}e^{-C(t-s)}\leq
 \frac{\mathcal A(x,s)\mathcal B(x,s)}{\mathcal A(x,t)\mathcal B(x,t)}
 \leq Ce^{-c(t-s)},\qquad 0\leq s\leq t\leq T.
\end{equation}
{\it Step 2.} Choose $y$ with $\theta(y,t)=\int_0^1\theta dx$. Then
\begin{align}
&\left|(\sqrt{\theta(x,t)}-\sqrt{c_1/2})_+
       -(\sqrt{\theta(y,t)}-\sqrt{c_1/2})_+\right|\notag\\
&\quad\leq\frac12\int_{\theta>c_1/2}\theta^{-1/2}|\theta_x|dx\notag\\
 &\quad\leq C V(t)^{1/2}\left(\int_{\theta>c_1/2}v\theta^{1-\beta}dx\right)^{1/2}
 \leq C\left(V(t)\max_xv(x,t)\right)^{1/2},\notag\\
&c-CV(t)\max_xv(x,t)\leq\theta(x,t)
                     \leq C+CV(t)\max_xv(x,t).\label{temperature-oscillation}
\end{align}
Substitution into \eqref{volume-representation} gives
\begin{align*}
\max_xv(x,t)&\leq C+C\int_0^tV(s)\max_xv(x,s)ds
 \leq C\exp\left\{C\int_0^TV(s)ds\right\}\leq C.
\end{align*}
{\it Step 3.} Fix $L\geq1$ so that $cL-C\int_0^TV(s)ds\geq cL/2$.
For $t\geq L$, \eqref{temperature-oscillation} and
\eqref{positive-kernel} imply
\begin{align*}
\int_{t-L}^t\theta(x,s)ds&\geq cL-C\int_{t-L}^tV(s)ds\geq cL/2,\\
v(x,t)&\geq C^{-1}e^{-CL}\int_{t-L}^t\theta(x,s)ds
                    \geq C^{-1}e^{-CL}cL/2.
\end{align*}
For $0\leq t\leq L$, the initial term in
\eqref{volume-representation} gives $v(x,t)\geq C^{-1}e^{-CL}\underline v$.
\end{proof}

\begin{lemma}\label{lm24}
There exists $c>0$ such that $\theta\geq c$. Moreover, for $0<\varepsilon<1$,
\begin{equation}\label{enhanced-dissipation}
\int_0^T\int_0^1\left(\theta^{\alpha-\varepsilon}u_x^2
                +\theta^{\beta-1-\varepsilon}\theta_x^2\right)dxdt
 \leq C_\varepsilon.
\end{equation}
\end{lemma}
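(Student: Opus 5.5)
The plan is to prove the lower bound for $\theta$ by a comparison/maximum-principle argument on the internal energy equation \eqref{internal-energy}, and then derive \eqref{enhanced-dissipation} by multiplying \eqref{internal-energy} by $\theta^{-\varepsilon}$.

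\textbf{Lower bound.} Lemma~\ref{lm23} gives $C^{-1}\leq v\leq C$, and \eqref{mu-range} gives $\frac12\leq\theta^\alpha\leq 2$. Since $\mu u_x^2/v\geq0$, completing the square in $u_x$ yields
\[
\frac{\mu u_x^2}{v}-\frac{R\theta u_x}{v}\geq -\frac{R^2\theta^2}{4\mu v}\geq -C\theta^2 .
\]
Hence $c_v\theta_t\geq(\kappa\theta_x/v)_x-C\theta^2$. In terms of $w=1/\theta$ this reads
\[
c_v w_t\leq (\kappa w_x/v)_x-\frac{2\kappa\theta_x^2}{v\theta^3}+C\leq(\kappa w_x/v)_x+C .
\]
The Neumann condition \eqref{equ-6} gives $w_x=0$ on the boundary. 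The maximum principle then yields $w\leq \underline\theta^{-1}+Ct$.

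This bound grows in $t$, so it is not enough for a uniform lower bound. For uniformity I would instead use the oscillation argument already present in the proof of Lemma~\ref{lm23}. The bound \eqref{temperature-oscillation} gives $\theta(x,t)\geq c-CV(t)$, since $\max_x v\leq C$. This controls $\theta$ from below except on times where $V(t)$ is large, and such times have bounded total measure because $\int_0^T V\,dt\leq C$.

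The cleaner route, and the one I expect to be the main obstacle, is a direct $L^p$ bound on $1/\theta$ uniform in time. I would multiply \eqref{internal-energy} by $\theta^{-p-1}$ (i.e.\ track $\int_0^1\theta^{-p}dx$). The pressure-work term produces $\int \theta^{-p}|u_x|$. I would split this as $\frac12\int\mu u_x^2\theta^{-p-1}/v+C\int\theta^{1-p}$. The first piece is absorbed by the viscous term. The second is bounded by $\|\theta^{-1}\|_{L^\infty}^{p-1}$ only when $p$ is small, and otherwise requires a bound that is not uniform in time.

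Because of this, I would actually combine the two pieces of information. From \eqref{temperature-oscillation}, on any unit time interval $[t-1,t]$ the set of $s$ with $V(s)\leq c/(2C)$ has measure at least $1/2$ once $t$ is large. The reason is that $\int_{t-1}^tV\to0$ only in the tail sense, but it is bounded overall, so only finitely many unit intervals can be bad. On a good time we have $\min_x\theta\geq c/2$. Restarting the maximum-principle bound from that time gives $1/\theta\leq 2/c+C$ on the following unit interval. The finitely many bad intervals are covered by $w\leq\underline\theta^{-1}+Ct$ on a bounded time range. Together these give a uniform $c$ with $\theta\geq c$.

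\textbf{Enhanced dissipation.} Next I multiply \eqref{internal-energy} by $\theta^{-\varepsilon}$ and integrate over $(0,1)\times(0,T)$. After integrating by parts using \eqref{equ-6}, this gives
\[
\frac{c_v}{1-\varepsilon}\int_0^1\theta^{1-\varepsilon}dx\Big|_0^t+\varepsilon\iint\frac{\kappa\theta_x^2}{v\theta^{1+\varepsilon}}+\iint\frac{\mu u_x^2}{v\theta^{\varepsilon}}=\iint\frac{R\theta^{1-\varepsilon}u_x}{v}.
\]
The left boundary term is bounded because $\theta^{1-\varepsilon}\leq 1+\theta$, $\int\theta\,dx\leq c_2$ by \eqref{positive-means}, and $\theta\geq c$. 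For the right side I use Young:
\[
\frac{R\theta^{1-\varepsilon}|u_x|}{v}\leq \frac12\frac{\mu u_x^2}{v\theta^\varepsilon}+C\theta^{2-\varepsilon}.
\]
The term $\iint\theta^{2-\varepsilon}$ is not obviously bounded over infinite time, so this naive estimate does not close. I would instead bound it via $\theta\leq C+CV$, which follows from \eqref{temperature-oscillation}. With this, the first half of the viscous term is handled and the remaining contributions are controlled through $\int_0^TV\,dt\leq C$.

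The cleaner fix is to use the entropy dissipation $\int_0^TV\leq C$ directly. Since $\theta\geq c$, for $\theta\leq1$ the weights satisfy $\theta^{\alpha-\varepsilon}\leq C\theta^{\alpha-1}$ and $\theta^{\beta-1-\varepsilon}\leq C\theta^{\beta-2}$, so these regions are controlled by $V$. On $\{\theta>1\}$ I would take $(\theta-1)_+$-type truncations of the $\theta^{-\varepsilon}$ multiplier. There the pressure term is integrable because the equation is tested against the bounded-energy structure \eqref{energy-work}. This last balancing is where the constant acquires its dependence on $\varepsilon$.
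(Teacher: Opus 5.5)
Your lower bound takes a different route from the paper, and it is essentially sound. The paper tests \eqref{internal-energy} with $-\theta^{-2}(\theta^{-1}-c_1^{-1})_+^p$. It uses $\|(\sqrt{c_1}-\sqrt\theta)_+\|_{L^\infty}^2\le CV(t)$ to get a Gronwall inequality with weight $(p+1)V(t)$, then lets $p\to\infty$. You instead combine the linear-in-time maximum principle for $1/\theta$ with the cold oscillation bound $\theta\ge c-CV(t)$ from \eqref{temperature-oscillation}. This is more elementary, though it needs a weak maximum principle (or the smooth approximation the paper invokes).

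Your final bookkeeping is not uniform in $T$ as written. Nothing forces the bad unit intervals to occur ``once $t$ is large'' or within ``a bounded time range''. The correct statement is that $\{s: V(s)>c/(2C)\}$ has measure at most $L_0=2Cc^{-1}\int_0^TV\,dt$, which is independent of $T$. So every $[t-L_0-1,t]\subset[0,T]$ contains a good time $s$, and restarting the maximum principle at $s$ gives $\theta(\cdot,t)^{-1}\le 2/c+C(L_0+1)$. For $t<L_0+1$, start from $\underline\theta$.

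The genuine gap is in \eqref{enhanced-dissipation}. Your first route fails. Since $\theta\ge c$, you have $\int_0^T\int_0^1\theta^{2-\varepsilon}dxdt\ge c^{2-\varepsilon}T$. Using $\theta\le C+CV$ does not help, because the constant part still contributes a term of order $T$.

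Truncating at $\theta>1$ has the same defect. The mean temperature may exceed $1$ (recall $c_2\ge1$, and $\hat\theta$ can be large), so $(\theta-1)_+$ can be of order one for all time. Young's inequality then leaves $C\int_0^T\int_0^1(\theta-1)_+\theta^{1-\varepsilon}dxdt$, again of order $T$. The identity \eqref{energy-work} only bounds $\sup_t\int_0^1\theta\,dx$ and gives no time integrability.

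The missing idea is to truncate strictly above the upper bound $c_2$ of the mean, testing with $(\theta-2c_2)_+\theta^{-1-\varepsilon}$. Since $\theta(\cdot,t)\le c_2$ at some point, $\|(\theta-2c_2)_+\|_{L^\infty}\le\frac12\bigl(\int_0^1\theta^{-1/2}|\theta_x|dx\bigr)^2\le CV(t)$. Hence the pressure work is at most half the truncated viscous term plus $C_\varepsilon\|(\theta-2c_2)_+\|_{L^\infty}\int_0^1\theta\,dx\le C_\varepsilon V(t)$, which is time-integrable. The extra diffusion term $(1+\varepsilon)2c_2\tilde\kappa\int_{\theta>2c_2}\theta^{\beta-2-\varepsilon}\theta_x^2/v\,dx$ is also bounded by $CV(t)$. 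The complementary regions, $\theta\le4c_2$ for $u_x^2$ and $\theta\le2c_2$ for $\theta_x^2$ (not merely $\theta\le1$), are absorbed into $\int_0^TV\,dt$ through $\theta^{\alpha-\varepsilon}\le(4c_2)^{1-\varepsilon}\theta^{\alpha-1}$ and $\theta^{\beta-1-\varepsilon}\le(2c_2)^{1-\varepsilon}\theta^{\beta-2}$.
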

\begin{proof}
Multiply \eqref{internal-energy} by
$-\theta^{-2}(\theta^{-1}-c_1^{-1})_+^p$, $p\geq3$.
Using $\theta_x=0$ at the endpoints, we obtain
\begin{align}
&\frac{c_v}{p+1}\frac d{dt}\int_0^1(\theta^{-1}-c_1^{-1})_+^{p+1}dx
 +\int_0^1\frac{\mu u_x^2}{v\theta^2}(\theta^{-1}-c_1^{-1})_+^pdx\notag\\
&\quad+2\tilde\kappa\int_0^1\frac{\theta^{\beta-3}\theta_x^2}{v}
                     (\theta^{-1}-c_1^{-1})_+^pdx
 +p\tilde\kappa\int_0^1\frac{\theta^{\beta-4}\theta_x^2}{v}
                     (\theta^{-1}-c_1^{-1})_+^{p-1}dx\notag\\
&=R\int_0^1\frac{u_x}{v\theta}(\theta^{-1}-c_1^{-1})_+^pdx
 \leq\frac12\int_0^1\frac{\mu u_x^2}{v\theta^2}
                      (\theta^{-1}-c_1^{-1})_+^pdx
       +C\int_0^1(\theta^{-1}-c_1^{-1})_+^pdx.\label{inverse-test}
\end{align}
At a mean-temperature point,
\begin{align*}
\|(\sqrt{c_1}-\sqrt\theta)_+\|_{L^\infty}^2
&\leq c_1^{-\beta}\|(c_1^{(\beta+1)/2}-\theta^{(\beta+1)/2})_+\|_{L^\infty}^2\\
&\leq C\left(\int_0^1\theta^{(\beta-1)/2}|\theta_x|dx\right)^2
 \leq CV(t)\int_0^1v\theta dx\leq CV(t),\\
(\theta^{-1}-c_1^{-1})_+^p
&\leq C(\sqrt{c_1}-\sqrt\theta)_+^2
                 \left[1+(\theta^{-1}-c_1^{-1})_+^{p+1}\right].
\end{align*}
The constant in the second inequality is independent of $p\geq3$.
Thus
\begin{align*}
&\frac d{dt}\left[1+\int_0^1(\theta^{-1}-c_1^{-1})_+^{p+1}dx\right]
 \leq C(p+1)V(t)\left[1+\int_0^1(\theta^{-1}-c_1^{-1})_+^{p+1}dx\right],\\
&\|(\theta^{-1}-c_1^{-1})_+(t)\|_{L^{p+1}}
 \leq\left[1+\int_0^1(\theta_0^{-1}-c_1^{-1})_+^{p+1}dx\right]^{1/(p+1)}
                      e^{C\int_0^TV(s)ds}\leq C.
\end{align*}
Letting $p\to\infty$ proves the lower bound.

Testing the heat equation by
$(\theta-2c_2)_+\theta^{-1-\varepsilon}$ yields
\begin{align}
&\int_0^1\frac{\mu u_x^2}{v}(\theta-2c_2)_+\theta^{-1-\varepsilon}dx
 +\varepsilon\tilde\kappa\int_{\theta>2c_2}
                   \frac{\theta^{\beta-1-\varepsilon}\theta_x^2}{v}dx\notag\\
&=c_v\frac d{dt}\int_{\theta>2c_2}\left[
 \frac{\theta^{1-\varepsilon}-(2c_2)^{1-\varepsilon}}{1-\varepsilon}
 +\frac{2c_2}{\varepsilon}(\theta^{-\varepsilon}-(2c_2)^{-\varepsilon})\right]dx\notag\\
&\quad +(1+\varepsilon)(2c_2)\tilde\kappa\int_{\theta>2c_2}
                  \frac{\theta^{\beta-2-\varepsilon}\theta_x^2}{v}dx
 +R\int_0^1\frac{u_x}{v}(\theta-2c_2)_+\theta^{-\varepsilon}dx.
\label{hot-test}
\end{align}
Moreover,
\begin{align*}
&\|(\theta-2c_2)_+\|_{L^\infty}
 \leq\frac12\left(\int_0^1\theta^{-1/2}|\theta_x|dx\right)^2
 \leq CV(t),\\
&R\left|\int_0^1\frac{u_x}{v}(\theta-2c_2)_+\theta^{-\varepsilon}dx\right|\\
 &\quad\leq\frac12\int_0^1\frac{\mu u_x^2}{v}
                         (\theta-2c_2)_+\theta^{-1-\varepsilon}dx
       +C_\varepsilon\|(\theta-2c_2)_+\|_{L^\infty}\int_0^1\theta dx,\\
&(1+\varepsilon)(2c_2)\tilde\kappa\int_{\theta>2c_2}
                     \frac{\theta^{\beta-2-\varepsilon}\theta_x^2}{v}dx
 \leq C_\varepsilon V(t),\\
&\theta^{\alpha-\varepsilon}\mathbf1_{\theta\leq4c_2}
 \leq(4c_2)^{1-\varepsilon}\theta^{\alpha-1},\\
 &\theta^{\beta-1-\varepsilon}\mathbf1_{\theta\leq 2c_2}
 \leq (2c_2)^{1-\varepsilon}\theta^{\beta-2}.
\end{align*}
Integrating \eqref{hot-test} gives \eqref{enhanced-dissipation}.
\end{proof}
\begin{lemma}\label{lm25}
It holds that
\begin{equation}\label{volume-gradient}
\sup_{t\leq T}\|v_x(t)\|_{L^2}^2+
 \int_0^T\int_0^1\theta v_x^2dxdt
 \leq\begin{cases}C,&\beta>0,\\ C_\varepsilon R_T^\varepsilon,&\beta=0,
 \end{cases}\qquad 0<\varepsilon<\tfrac12,
\end{equation}
where
\begin{equation}\label{RT}
 R_T=1+\sup_{(x,t)\in[0,1]\times[0,T]}\theta(x,t).
\end{equation}
\end{lemma}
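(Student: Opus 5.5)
We will use the identity quoted in the introduction. From \eqref{equ-2} with $\mu=\tilde\mu\theta^\alpha$, $v_t=u_x$ and $\mu_x=\alpha\tilde\mu\theta^{\alpha-1}\theta_x$, dividing by $\mu$ gives
\[
\left(\frac{v_x}{v}\right)_t+\frac{R\theta v_x}{\mu v^2}
 =\frac{\tilde u_t}{\mu}+\frac{R\theta_x}{\mu v}-\frac{\mu_xu_x}{\mu v}.
\]
We multiply this by $v_x/v$ and integrate over $(0,1)\times(0,t)$. By Lemma \ref{lm23} and \eqref{mu-range}, the left side controls $\frac12\|v_x/v\|_{L^2}^2(t)+c\int_0^t\int_0^1\theta v_x^2\,dxds$.

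The term $\tilde u_t v_x/(\mu v)$ is treated by moving the time derivative:
\[
\int\frac{\tilde u_t v_x}{\mu v}
 =\frac{d}{dt}\int\frac{\tilde uv_x}{\mu v}-\int\tilde u\left(\frac{v_x}{\mu v}\right)_t .
\]
Expanding $(v_x/v)_t=(u_x/v)_x$ and integrating by parts, the last integral produces $\int u_x^2/(\mu v)$ plus boundary terms. These vanish since $\theta_x=0$ at $x=0,1$, and the boundary values of $u_x$ are given by \eqref{equ-5}; if needed, I would instead keep $\tilde u\,u_x/(\mu v)$ and use $\int_0^x\tilde u$. What remains is:
\begin{itemize}
\item a $\mu_t$ term $\alpha\int\tilde u\,\theta^{-1}\theta_t\,v_x/(\mu v)$;
\item $\int u_x\tilde u\,v_x$-type terms.
\end{itemize}
The pointwise boundary term $\int\tilde u v_x/(\mu v)$ is bounded by $\frac14\|v_x/v\|^2+C\|\tilde u\|^2$, using \eqref{positive-entropy}.

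For $\int_0^t\int u_x^2/(\mu v)$ I would split into $\theta\le 2$ and $\theta>2$. On $\theta\le2$ we have $u_x^2\le C\theta^{\alpha-1}\cdot\theta\, u_x^2$ bounded by $V$. On $\theta>2$ we use \eqref{enhanced-dissipation}: $\int u_x^2\le R_T^{\varepsilon}\int\theta^{\alpha-\varepsilon}u_x^2\le C_\varepsilon R_T^\varepsilon$. The term $\int\tilde u u_xv_x$ is handled with $\|\tilde u\|_{L^\infty}^2\le C(1+\|u_x\|^2)$ and Gronwall in $\|v_x\|^2$, with integrable coefficient $\int_0^T\|u_x\|^2$.

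The pressure-gradient term $\int R\theta_xv_x/(\mu v^2)$ is the decisive one. It is bounded by $\frac c2\int\theta v_x^2+C\int\theta^{-1}\theta_x^2$, and $\int\theta^{-1}\theta_x^2\le\int\theta^{\beta-1-\varepsilon}\theta_x^2\cdot\sup\theta^{1+\varepsilon-\beta}$.
\begin{itemize}
\item For $\beta>0$, choose $\varepsilon<\beta$. Then on $\theta>1$, $\theta^{-1}\le\theta^{\beta-1-\varepsilon}$, so Lemma \ref{lm24} gives a bound $C$. On $\theta\le1$, $\theta^{-1}\theta_x^2\le C\theta^{\beta-2}\theta_x^2\le CV$, using $\theta\ge c$.
\item For $\beta=0$, $\theta^{-1}\le R_T^{\varepsilon}\theta^{-1-\varepsilon}$ gives $C_\varepsilon R_T^\varepsilon$.
\end{itemize}
The same reasoning for the $u_x^2$ piece is what determines whether the $\beta>0$ case yields a pure constant. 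I expect the main obstacle here: the $\theta>2$ part of $\int u_x^2$ seems to require $R_T^\varepsilon$ even when $\beta>0$. I would first try to avoid it by keeping $\tilde u\,u_x$ weighted against $\theta v_x^2$ in a way that uses only $V$.

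The $\alpha$-terms are handled by smallness. The viscous commutator $\int\mu_xu_xv_x/(\mu v^2)$ is at most $\alpha\int\theta^{-1}|\theta_x||u_x||v_x|$. Using $\|v_x\|_{L^2}$ and $\|u_x\|_{L^\infty}\le C\|u_x\|_{H^1}$, this is bounded by $C\alpha M^2\|\theta_x\|\|u_x\|_{H^1}\|v_x\|$. The factor $\alpha M^{5/2}\le1$ from \eqref{positive-smallness} turns this into $\|v_x\|^2\|u_x\|_{H^1}^2+C\|\theta_x\|^2$-type terms with time-integrable coefficients, by the definition of $X(0,T;M)$. The $\theta_t$ term is treated similarly using $\int_0^T\|\theta_t\|^2\le M$. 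Gronwall then closes the estimate \eqref{volume-gradient}.
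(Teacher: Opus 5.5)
There is a genuine gap, and it lies in how you treat $-\int_0^1\tilde u\,(v_x/(\mu v))_t\,dx$.

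\textbf{The boundary term.} Integrating $-\int_0^1\frac{\tilde u}{\mu}\bigl(\frac{u_x}{v}\bigr)_x dx$ by parts leaves
\[
-\Bigl[\frac{\tilde u u_x}{\mu v}\Bigr]_0^1=-\Bigl[\frac{\tilde u(R\theta-Pv)}{\mu^2 v}\Bigr]_0^1 ,
\]
which does not vanish. The velocity $u$ carries no Dirichlet condition, and \eqref{equ-5} prescribes $\mu u_x=R\theta-Pv$, not $0$, at the endpoints. The condition $\theta_x=0$ plays no role here. Bounding the term by $C\|\tilde u\|_{L^\infty}(1+\|\theta\|_{L^\infty})\le CV(t)^{1/2}(1+V(t))$ does not help. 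With only $\int_0^TV\,dt\le C$ available, its time integral is not bounded independently of $T$. Your fallback via $\int_0^x\tilde u$ is not specified.

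\textbf{The bulk term and Gronwall.} The term $\int_0^T\!\int_0^1 u_x^2/(\mu v)\,dxdt$ that you create is only $O(R_T^\varepsilon)$ via \eqref{enhanced-dissipation}. So the constant bound for $\beta>0$ is lost, as you suspected. Worse, using $\int_0^T\|u_x\|_{L^2}^2dt\le C_\varepsilon R_T^\varepsilon$ as a Gronwall coefficient gives $\exp(C_\varepsilon R_T^\varepsilon)$, not $C_\varepsilon R_T^\varepsilon$. So even the case $\beta=0$ fails. These two defects could be repaired: testing \eqref{internal-energy} with $(1-2c_2/\theta)_+$, as in Lemma~\ref{lm26}, gives $\int_0^T\|u_x\|_{L^2}^2dt\le C$ without using the present lemma. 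The boundary term, however, remains.

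\textbf{The $\alpha$-terms.} You must not run Gronwall with coefficient $\|u_x\|_{H^1}^2$. Its time integral is only $\le M$, which produces $e^{CM}$ and breaks the continuity argument. Bound these terms directly instead, for example by $C\alpha^2\sup_t\|u_x\|_{L^2}^2\int_0^T\|\theta_x\|_{L^\infty}^2dt\le C\alpha^2M^2$.

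\textbf{The missing idea.} Do not integrate by parts in space. Instead, substitute the equation for $(v_x/v)_t$ back into $-\int_0^1\frac{\tilde u}{\mu}\bigl(\frac{v_x}{v}\bigr)_t dx$. Since
\[
-\int_0^1\frac{\tilde u\tilde u_t}{\mu^2}dx=-\frac{d}{dt}\int_0^1\frac{\tilde u^2}{2\mu^2}dx-\alpha\int_0^1\frac{\tilde u^2\theta_t}{\mu^2\theta}dx ,
\]
this completes a square. Set $\Phi=\frac12\int_0^1\bigl(\frac{v_x}{v}-\frac{\tilde u}{\mu}\bigr)^2dx$, which satisfies $\Phi\ge\frac14\int_0^1\frac{v_x^2}{v^2}dx-C$. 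Then
\[
\Phi'(t)+\int_0^1\frac{R\theta v_x^2}{\mu v^3}dx
=-R\int_0^1\frac{\tilde u\theta_x}{\mu^2v}dx+R\int_0^1\frac{\theta\tilde uv_x}{\mu^2v^2}dx
+R\int_0^1\frac{v_x\theta_x}{\mu v^2}dx
+\alpha\int_0^1\frac1\theta\Bigl(\frac{v_x}{v}-\frac{\tilde u}{\mu}\Bigr)\Bigl(\frac{\tilde u\theta_t}{\mu}-\frac{u_x\theta_x}{v}\Bigr)dx .
\]
Now $\tilde u$ enters only through $\|\tilde u\|_{L^\infty}^2\le\bigl(\int_0^1|u_x|dx\bigr)^2\le CV(t)$. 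This uses $\int_0^1\tilde u\,dx=0$, $v\le C$ and $\int_0^1\theta\,dx\le c_2$. Hence $\int_0^1\theta\tilde u^2dx\le CV(t)$ is integrable in time. There is no boundary term, no $\int u_x^2$, and no Gronwall. The only $R_T$-sensitive quantity left is $\int_0^T\!\int_0^1\theta^{-1}\theta_x^2\,dxdt$, and your case analysis in $\beta$ already handles that correctly.
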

\begin{proof}
By \eqref{equ-1} and \eqref{equ-2},
\begin{equation*}
 \left(\frac{v_x}{v}\right)_t+
 \frac{R\theta v_x}{\mu v^2}
 =\frac{\tilde u_t}{\mu}+\frac{R\theta_x}{\mu v}
                  -\frac{\alpha u_x\theta_x}{v\theta}.
\end{equation*}
Multiplying by $v_x/v$ and integrating over $[0,1]$, we obtain
\begin{align}
&\frac12\frac d{dt}\int_0^1\frac{v_x^2}{v^2}dx
       +\int_0^1\frac{R\theta v_x^2}{\mu v^3}dx\notag\\
&=\frac d{dt}\int_0^1\frac{\tilde u v_x}{\mu v}dx
 -\int_0^1\frac{\tilde u}{\mu}\left(\frac{v_x}{v}\right)_tdx
 +\alpha\int_0^1\frac{\tilde u v_x\theta_t}{\mu v\theta}dx
 +R\int_0^1\frac{v_x\theta_x}{\mu v^2}dx
 -\alpha\int_0^1\frac{u_xv_x\theta_x}{v^2\theta}dx\notag\\
&=\frac d{dt}\int_0^1\left(\frac{\tilde u v_x}{\mu v}
                                  -\frac{\tilde u^2}{2\mu^2}\right)dx
 -R\int_0^1\frac{\tilde u\theta_x}{\mu^2v}dx
 +R\int_0^1\frac{\theta\tilde u v_x}{\mu^2v^2}dx
 +R\int_0^1\frac{v_x\theta_x}{\mu v^2}dx\notag\\
&\quad+\alpha\int_0^1\frac1\theta
 \left(\frac{v_x}{v}-\frac{\tilde u}{\mu}\right)
 \left(\frac{\tilde u\theta_t}{\mu}-\frac{u_x\theta_x}{v}\right)dx\notag\\
&=\frac d{dt}\int_0^1\left(\frac{\tilde u v_x}{\mu v}
                       -\frac{\tilde u^2}{2\mu^2}\right)dx
                            +\sum_{j=1}^4J_j.
\label{effective-gradient-identity}
\end{align}
Since $\int_0^1\tilde u\,dx=0$, Lemmas~\ref{lm21}--\ref{lm24} give
\begin{align*}
\|\tilde u\|_{L^\infty}^2
&\leq\left(\int_0^1|u_x|dx\right)^2
 \leq\int_0^1\frac{\mu u_x^2}{v\theta}dx
             \int_0^1\frac{v\theta}{\mu}dx\leq CV(t),\\
|J_1|&\leq C\left(\int_0^1\theta\tilde u^2dx\right)^{1/2}
                  \left(\int_0^1\theta^{-1}\theta_x^2dx\right)^{1/2}
 \leq CV(t)+C\int_0^1\theta^{-1}\theta_x^2dx,\\
|J_2|&\leq\frac18\int_0^1\frac{R\theta v_x^2}{\mu v^3}dx
                  +C\int_0^1\theta\tilde u^2dx
 \leq\frac18\int_0^1\frac{R\theta v_x^2}{\mu v^3}dx+CV(t),\\
|J_3|&\leq\frac18\int_0^1\frac{R\theta v_x^2}{\mu v^3}dx
                                    +C\int_0^1\theta^{-1}\theta_x^2dx,\\
|J_4|&\leq\frac18\int_0^1\frac{R\theta v_x^2}{\mu v^3}dx
 +C\int_0^1\theta\tilde u^2dx
 +C\alpha^2\int_0^1\frac{\mu v}{R\theta^3}
           \left(\frac{\tilde u\theta_t}{\mu}-\frac{u_x\theta_x}{v}\right)^2dx.
\end{align*}
Furthermore,
\begin{align*}
&\alpha^2\int_0^T\int_0^1\frac{\mu v}{R\theta^3}
           \left(\frac{\tilde u\theta_t}{\mu}-\frac{u_x\theta_x}{v}\right)^2dxdt\\
&\quad\leq C\alpha^2\sup_t\|\tilde u\|_{L^\infty}^2
                            \int_0^T\|\theta_t\|_{L^2}^2dt
       +C\alpha^2\sup_t\|u_x\|_{L^2}^2
                            \int_0^T\|\theta_x\|_{L^\infty}^2dt
 \leq C\alpha^2M^2\leq C,\\
&\Phi(t)\triangleq\int_0^1\left(\frac{v_x^2}{2v^2}
      -\frac{\tilde u v_x}{\mu v}+\frac{\tilde u^2}{2\mu^2}\right)dx
 =\frac12\int_0^1\left(\frac{v_x}{v}-\frac{\tilde u}{\mu}\right)^2dx
 \geq\frac14\int_0^1\frac{v_x^2}{v^2}dx-C.
\end{align*}
Thus \eqref{effective-gradient-identity} gives
\begin{align*}
\Phi'(t)+c\int_0^1\theta v_x^2dx
&\leq CV(t)+C\int_0^1\theta^{-1}\theta_x^2dx
 +C\alpha^2\int_0^1\frac{\mu v}{R\theta^3}
     \left(\frac{\tilde u\theta_t}{\mu}-\frac{u_x\theta_x}{v}\right)^2dx.
\end{align*}
Integration over $[0,T]$ yields
\begin{align*}
\sup_{t\leq T}\|v_x(t)\|_{L^2}^2+
 \int_0^T\int_0^1\theta v_x^2dxdt
 &\leq C+C\int_0^T\int_0^1\theta^{-1}\theta_x^2dxdt.
\end{align*}
For $\beta>0$, choose $0<\varepsilon<\min\{\beta,1\}$ in
\eqref{enhanced-dissipation}. For $\beta=0$,
\[
 \int_0^T\int_0^1\theta^{-1}\theta_x^2dxdt
 \leq R_T^\varepsilon\int_0^T\int_0^1
                         \theta^{-1-\varepsilon}\theta_x^2dxdt
 \leq C_\varepsilon R_T^\varepsilon.
\]
This proves \eqref{volume-gradient}.
\end{proof}

\begin{lemma}\label{lm26}
It holds that
\begin{align}
&\int_0^T\|u_x\|_{L^2}^2dt\leq C,\label{velocity-first-integral}\\
&\sup_{t\leq T}\int_0^1\theta^{\beta+3}dx
 +\int_0^T\int_0^1\theta^{2\beta+1}\theta_x^2dxdt
\notag\\
&\quad\leq C_\eta+\eta\int_0^T\|u_{xx}\|_{L^2}^2dt
                  +C\int_0^T\|u_x\|_{L^2}^4dt,\qquad \eta>0.
\label{high-moment-bound}
\end{align}
\end{lemma}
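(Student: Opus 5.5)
For \eqref{velocity-first-integral} I would use the momentum equation tested by $\tilde u$. Multiplying \eqref{equ-2} by $\tilde u$ and integrating by parts with the boundary condition \eqref{equ-5}, the boundary terms become $-P(t)(\tilde u(1,t)-\tilde u(0,t))=-P(t)\int_0^1u_x\,dx$. This gives
\begin{equation*}
\frac12\frac d{dt}\int_0^1\tilde u^2dx+\int_0^1\frac{\mu u_x^2}{v}dx
=\int_0^1\frac{R(\theta-\bar\theta)u_x}{v}dx+\int_0^1\left(\frac{R\bar\theta}{v}-P\right)u_xdx ,
\end{equation*}
where $\bar\theta=\int_0^1\theta\,dx$. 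I would handle the two right-hand terms as follows.

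\emph{First term.} By Lemma~\ref{lm23} and \eqref{mu-range}, $\mu/v\geq c$. Cauchy--Schwarz bounds this term by $\frac14\int\mu u_x^2/v+C\|\theta-\bar\theta\|_{L^\infty}^2$. The oscillation estimate from the proof of Lemma~\ref{lm23} (Step 2, with $v\leq C$) gives $\|\theta-\bar\theta\|_{L^\infty}^2\leq CV(t)$, up to the usual square-root manipulations. Since $\int_0^TV\,dt\leq C$ by Lemma~\ref{lm21}, this part is fine.

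\emph{Second term.} Using $u_x=v_t$, it equals $\frac{d}{dt}\int_0^1(R\bar\theta\log v-Pv)\,dx$ minus $\int_0^1(R\bar\theta_t\log v-P'v)\,dx$. The $P'v$ piece is integrable because $P'\in L^1$ and $v\leq C$. For $\bar\theta_t$ I would not use it directly. Instead I would combine the momentum identity with the energy identity \eqref{energy-work} and the entropy identity \eqref{entropy-identity}, exactly as in \eqref{tail-entropy}. Alternatively, $\bar\theta_t=c_v^{-1}\left(\int\mu u_x^2/v-\int R\theta u_x/v\right)$ is controllable after noticing $|\log v|\leq C$. The cleaner route is the linear combination: since $\mu\approx1$ and $\theta$ has a uniform lower bound (Lemma~\ref{lm24}), $V(t)\gtrsim\int\mu u_x^2/(v\theta)$. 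It remains to control $\int u_x^2$ on the set where $\theta$ is large. There I would use \eqref{enhanced-dissipation} with $\varepsilon=\alpha$, which gives $\int_0^T\int_0^1u_x^2\,dx\,dt\leq C$ directly since $\theta^{\alpha-\alpha}=1$. So \eqref{velocity-first-integral} is essentially \eqref{enhanced-dissipation} with $\varepsilon=\alpha$ ($\alpha\leq1/2<1$), provided $C_\varepsilon$ stays uniform for $\varepsilon$ near $0$. That uniformity needs checking, because \eqref{hot-test} carries a factor $2c_2/\varepsilon$. If it fails, I would fall back on the $\tilde u$ energy argument above.

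\emph{Moment bound.} For \eqref{high-moment-bound} I would multiply \eqref{internal-energy} by $\theta^{\beta+2}$ and integrate; the Neumann condition removes the boundary terms. This yields
\begin{equation*}
\frac{c_v}{\beta+3}\frac d{dt}\int\theta^{\beta+3}+(\beta+2)\tilde\kappa\int\frac{\theta^{2\beta+1}\theta_x^2}{v}
=\int\frac{\mu u_x^2\theta^{\beta+2}}{v}-R\int\frac{\theta^{\beta+3}u_x}{v}.
\end{equation*}
The right side is bounded by $C\|u_x\|_{L^\infty}^2\int\theta^{\beta+2}$ plus $C\|u_x\|_{L^\infty}\int\theta^{\beta+3}$. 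For $\|u_x\|_{L^\infty}^2$ I would use Gagliardo--Nirenberg, $\|u_x\|_{L^\infty}^2\leq C\|u_x\|_{L^2}^2+C\|u_x\|_{L^2}\|u_{xx}\|_{L^2}$. Then I would bound $\max\theta^{\beta+2}$ through $\max_x\theta^{\beta+1}\leq C+C\left(\int\theta^{\beta}|\theta_x|\right)$ and control $\int\theta^\beta|\theta_x|$ by $\left(\int\theta^{2\beta+1}\theta_x^2\right)^{1/2}$ together with $\left(\int\theta^{-1}\right)^{1/2}\leq C$. Young's inequality then absorbs the dissipation and produces $\eta\|u_{xx}\|^2$ and $C\|u_x\|^4$. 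A Gronwall factor $\exp(C\int\|u_x\|_{L^\infty})$ must also be removed. I expect this to be the main obstacle: the powers in Young's inequality have to close without generating a superlinear term in $\sup\int\theta^{\beta+3}$. I would try to handle it by using $\int_0^T\|u_x\|_{L^2}^2\leq C$ and interpolating $\theta^{\beta+3}$ between $L^1$ (bounded by Lemma~\ref{lm21}) and the $L^\infty$ bound coming from the dissipation term.
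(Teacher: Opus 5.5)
\textbf{The gap is in \eqref{high-moment-bound}.} You test \eqref{internal-energy} by the untruncated $\theta^{\beta+2}$ and place $u_x$ in $L^\infty$ against $\int\theta^{\beta+2}dx$ and $\int\theta^{\beta+3}dx$. That gives a Gronwall structure with factor $\exp\{C\int_0^T(\|u_x\|_{L^\infty}+\|u_x\|_{L^\infty}^2)dt\}$. This factor is not bounded independently of $T$, and it can never produce the linear form $C_\eta+\eta\int_0^T\|u_{xx}\|_{L^2}^2dt+C\int_0^T\|u_x\|_{L^2}^4dt$.

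Your proposed repair, via $\max_x\theta^{\beta+1}\le C+CD^{1/2}$ with $D=\int_0^1\theta^{2\beta+1}\theta_x^2dx$, does not close either. The product $\|u_x\|_{L^\infty}^2D^{1/2}$ leads after Young to $\|u_x\|_{L^\infty}^4\lesssim\|u_x\|_{L^2}^4+\|u_x\|_{L^2}^2\|u_{xx}\|_{L^2}^2$. The last term cannot be absorbed into $\eta\|u_{xx}\|_{L^2}^2$, because $\sup_t\|u_x\|_{L^2}$ is only obtained in Lemma~\ref{lm27}, which uses the present lemma. In addition, the additive constant in the sup bound leaves a term $C\|u_x\|_{L^\infty}$, whose time integral the available bounds do not control uniformly in $T$.

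Two ideas are missing.
\begin{itemize}
\item \emph{Truncate.} Multiply by $(\theta^{\beta+2}-c_2^{\beta+2})_+$. Since $\int_0^1\theta dx\le c_2$, this multiplier vanishes somewhere, so $\|(\theta^{\beta+2}-c_2^{\beta+2})_+\|_{L^\infty}\le(\beta+2)\int_{\theta>c_2}\theta^{\beta+1}|\theta_x|dx\le CD_+^{1/2}$, where $D_+=\int_{\theta>c_2}\theta^{2\beta+1}\theta_x^2dx$. There is no additive constant.
\item \emph{Reverse the placement.} Put the multiplier in $L^\infty$ and the velocity factors in $L^1$: $\int u_x^2dx=\|u_x\|_{L^2}^2$ and $\int\theta|u_x|dx\le\|u_x\|_{L^\infty}\int\theta dx$.
\end{itemize}
The source is then at most $C(\|u_x\|_{L^2}^2+\|u_x\|_{L^\infty})D_+^{1/2}$. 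Young's inequality gives half the dissipation plus $C\|u_x\|_{L^2}^4+C\|u_x\|_{L^\infty}^2$; this is where the $\|u_x\|_{L^2}^4$ in the statement comes from. Then $\|u_x\|_{L^\infty}^2\le\eta\|u_{xx}\|_{L^2}^2+C_\eta\|u_x\|_{L^2}^2$. You integrate in time with no Gronwall step and use \eqref{velocity-first-integral}. The part of $\int_0^T\!\int_0^1\theta^{2\beta+1}\theta_x^2$ where $\theta\le c_2$ is bounded by \eqref{enhanced-dissipation}.

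For \eqref{velocity-first-integral}, your main route is essentially the paper's. The paper tests \eqref{internal-energy} by $(1-2c_2/\theta)_+$, which is the case $\varepsilon=0$ of \eqref{hot-test}, and handles $\theta\le4c_2$ by $u_x^2\le C\mu u_x^2/(v\theta)$. The uniformity you doubted does hold for the $u_x$-part, and you need it, also at $\varepsilon=0$ when $\alpha=0$:
\begin{itemize}
\item The bracket in \eqref{hot-test} equals $\int_{2c_2}^\theta(s-2c_2)s^{-1-\varepsilon}ds\in[0,\theta]$, so the factor $2c_2/\varepsilon$ is only apparent.
\item The cross term is at most half the weighted dissipation plus $C\|(\theta-2c_2)_+\|_{L^\infty}\int_0^1\theta dx\le CV(t)$.
\item The $2c_2$-term is at most $CV(t)$.
\end{itemize}
All three bounds hold uniformly for $\varepsilon\in[0,1)$; only the $\theta_x$-part of \eqref{enhanced-dissipation} carries the factor $\varepsilon$.

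Drop the fallback. There, $c_v\bar\theta_t=\int_0^1(\mu u_x^2-R\theta u_x)/v\,dx$ brings back $\int_0^T\!\int_0^1u_x^2$, the very quantity being estimated. Also, $\|\theta-\bar\theta\|_{L^\infty}^2\le CV$ is not available before an upper temperature bound; only $\|\theta-\bar\theta\|_{L^2}^2\le CV$ is.
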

\begin{proof}
Testing \eqref{internal-energy} by $(1-2c_2/\theta)_+$ gives
\begin{align*}
&\int_0^T\int_0^1\frac{\mu u_x^2}{v}(1-2c_2/\theta)_+dxdt\\
&\quad\leq C+C\int_0^T V(t)dt+
 C\int_0^T\int_0^1|u_x|(\theta-2c_2)_+dxdt\\
&\quad\leq C+\eta\int_0^T\|u_x\|_{L^2}^2dt+
 C_\eta\int_0^T\|(\theta-2c_2)_+\|_{L^\infty}\int_0^1\theta dxdt.
\end{align*}
The region $\theta\leq4c_2$ is controlled by
$u_x^2\leq C\mu u_x^2/(v\theta)$, and the last time integral is
bounded by $C\int_0^TV(t)dt$. Choosing $\eta$ small proves
\eqref{velocity-first-integral}.

Multiplication of
\eqref{internal-energy} by $(\theta^{\beta+2}-c_2^{\beta+2})_+$ yields
\begin{align}
&c_v\frac d{dt}\int_{\theta>c_2}
 \left[\frac{\theta^{\beta+3}}{\beta+3}-c_2^{\beta+2}\theta
                       +\frac{\beta+2}{\beta+3}c_2^{\beta+3}\right]dx
 +(\beta+2)\tilde\kappa\int_{\theta>c_2}
                       \frac{\theta^{2\beta+1}\theta_x^2}{v}dx\notag\\
&=\int_0^1\frac{\mu u_x^2-R\theta u_x}{v}
                         (\theta^{\beta+2}-c_2^{\beta+2})_+dx,
\label{high-moment-identity}\\
&\| (\theta^{\beta+2}-c_2^{\beta+2})_+\|_{L^\infty}
 \leq(\beta+2)\int_{\theta>c_2}\theta^{\beta+1}|\theta_x|dx
 \leq C\left(\int_{\theta>c_2}\theta^{2\beta+1}\theta_x^2dx\right)^{1/2}.
\notag
\end{align}
The right side of \eqref{high-moment-identity} is at most
\begin{align*}
&C(\|u_x\|_{L^2}^2+\|u_x\|_{L^\infty})
             \left(\int_{\theta>c_2}\theta^{2\beta+1}\theta_x^2dx\right)^{1/2}\\
&\quad\leq\frac{(\beta+2)\tilde\kappa}{2}
               \int_{\theta>c_2}\frac{\theta^{2\beta+1}\theta_x^2}{v}dx
 +\eta\|u_{xx}\|_{L^2}^2+C_\eta\|u_x\|_{L^2}^2+C\|u_x\|_{L^2}^4.
\end{align*}
Here
$\|u_x\|_{L^\infty}^2\leq C\|u_x\|_{L^2}^2+C\|u_x\|_{L^2}\|u_{xx}\|_{L^2}$.

The part $\theta\leq c_2$ is bounded by
$C_\varepsilon\int_0^T\int_0^1\theta^{\beta-1-\varepsilon}\theta_x^2dxdt$.
Integrating \eqref{high-moment-identity} and using
\eqref{velocity-first-integral}, we obtain \eqref{high-moment-bound}.
\end{proof}

\begin{lemma}\label{lm27}
For every $0<\varepsilon<1/2$,
\begin{align}
&\sup_{t\leq T}\left(\|u_x\|_{L^2}^2+\int_0^1\theta^{\beta+3}dx\right)\notag\\
 &\quad+\int_0^T\left(\|u_{xx}\|_{L^2}^2+\|u_t\|_{L^2}^2+
             \int_0^1\theta^{2\beta+1}\theta_x^2dx\right)dt
 \leq C_\varepsilon R_T ^{1+\varepsilon}.\label{velocity-moment}
\end{align}
\end{lemma}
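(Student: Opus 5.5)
We multiply the momentum equation \eqref{equ-2} by $-u_{xx}$ and integrate over $(0,1)$, exactly as in the constant-viscosity outer pressure problem. The only change is that the boundary term must be handled through the effective stress. Write $\sigma=\frac{\mu u_x}{v}-\frac{R\theta}{v}$. Then \eqref{equ-5} gives $\sigma=-P(t)$ at $x=0,1$, and integration by parts yields
\[
\frac12\frac{d}{dt}\|u_x\|_{L^2}^2=u_tu_x\Big|_0^1-\int_0^1\sigma_xu_{xx}dx .
\]
We avoid the boundary term $u_tu_x$ by using instead the multiplier $\sigma_x=u_t$. Concretely, we test \eqref{equ-2} by $u_t$ and compute $\int_0^1\sigma_xu_tdx=\sigma u_t|_0^1-\int_0^1\sigma u_{xt}dx$. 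Since $\int_0^1u_tdx=0$ by conservation, the boundary piece $\sigma u_t|_0^1=-P(t)(u_t(1)-u_t(0))=-P(t)\int_0^1u_{xt}dx$ combines with the interior one. This gives
\[
\|u_t\|_{L^2}^2=-\int_0^1(\sigma+P)u_{xt}dx .
\]
Expanding $\sigma$, the term $\frac{\mu u_xu_{xt}}{v}$ equals $\frac12\frac d{dt}\int\frac{\mu u_x^2}{v}$ plus the errors $\frac12\int\frac{\mu_t u_x^2}{v}$ and $\frac12\int\frac{\mu u_x^3}{v^2}$. The pressure terms $\int\frac{R\theta-Pv}{v}u_{xt}$ become $\frac d{dt}\int\frac{R\theta-Pv}{v}u_x$ minus $\int(\frac{R\theta}{v})_tu_x$ and $\int P'u_x$. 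By \eqref{mu-range} and Lemma~\ref{lm23}, $\int\frac{\mu u_x^2}{v}\sim\|u_x\|_{L^2}^2$. The term $\int\frac{R\theta}{v}u_x$ is bounded by $\eta\|u_x\|^2+C_\eta\int\theta^2$, and $\int\theta^2\le C\int\theta^{\beta+3}$ up to constants.

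Next we bound the errors. We have $(R\theta/v)_t u_x\lesssim|\theta_t||u_x|+\theta u_x^2$. For $\theta_t$ we use \eqref{internal-energy}, which leads to $\int\theta_t^2$; as a fallback we integrate by parts in $x$ to reach $\int\theta^\beta\theta_x u_{xx}$-type terms. These are controlled by $\int\theta^{2\beta+1}\theta_x^2$ times $\|u_{xx}\|$, with the weight discrepancy absorbed by $R_T$. The $\alpha$-term $\mu_tu_x^2\sim\alpha\theta^{\alpha-1}\theta_tu_x^2$ is made small using \eqref{positive-smallness}. It is bounded by $\alpha M\|\theta_t\|_{L^2}\|u_x\|_{L^4}^2$, which the hypothesis $\alpha M^{5/2}\le1$ reduces to a constant after time integration against the $X$-norm. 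The cubic term $\int u_x^3\le\|u_x\|_{L^\infty}\|u_x\|^2$ is handled by the Gagliardo--Nirenberg bound quoted in Lemma~\ref{lm26}. Finally, \eqref{equ-2} gives $\|u_{xx}\|\lesssim\|u_t\|+\|\theta_x\|+\|v_x\|_{L^2}(\|u_x\|_{L^\infty}+\theta)+|\alpha|\,\|\theta_xu_x\|$. Here $\|\theta v_x\|^2\le R_T\int\theta v_x^2$, which is bounded by Lemma~\ref{lm25} by $C_\varepsilon R_T^{1+\varepsilon}$; this is the source of the $R_T^{1+\varepsilon}$ in the statement. The term $\|\theta_x\|^2\le C\int\theta^{2\beta+1}\theta_x^2+C\int\theta^{\beta-1-\varepsilon}\theta_x^2$ is handled by Lemma~\ref{lm24}.

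We then add this energy inequality to \eqref{high-moment-bound} with $\eta$ small, absorbing $\eta\int\|u_{xx}\|^2$. The resulting inequality has the form
\[
Y'(t)+D(t)\le CR_T^{1+\varepsilon}g(t)+C\|u_x\|_{L^2}^2\,\|u_x\|_{L^2}^2+C|P'|^2,
\]
where $Y=\|u_x\|^2+\int\theta^{\beta+3}$, $D$ is the dissipation on the left of \eqref{velocity-moment}, and $\int_0^Tg\le C$. The quartic term $\int\|u_x\|^4$ from \eqref{high-moment-bound} is treated by Gronwall, using $\int_0^T\|u_x\|^2dt\le C$ from \eqref{velocity-first-integral} and $P'\in L^2$. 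This yields \eqref{velocity-moment}.

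The main obstacle is the $\theta_t$ contribution in $(R\theta/v)_tu_x$, which requires closing without circularity. We plan to write $\theta_t$ via \eqref{internal-energy} and integrate the diffusion term by parts. That produces $\int\frac{\kappa\theta_x}{v}(u_x/v)_x$, which we bound by $\eta\|u_{xx}\|^2+C\int\theta^{2\beta}\theta_x^2(1+v_x^2\ldots)$. Since $\theta^{2\beta}\le C\theta^{2\beta+1}$ because $\theta\ge c$ (Lemma~\ref{lm24}), this closes with the high-moment dissipation. The $v_x$ part uses $\sup\|v_x\|$ from Lemma~\ref{lm25} and $\|\theta^\beta\theta_x\|_{L^\infty}$ bounds. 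These terms are where the $R_T^\varepsilon$ losses enter.
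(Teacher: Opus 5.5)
Your route is viable, but it uses a different multiplier from the paper. The paper tests the momentum equation by $-\partial_x[u_x+(Pv-R\theta)/\mu]$. This quantity vanishes at $x=0,1$ by \eqref{equ-5}, so the test gives the dissipation $\int_0^1\frac{\mu}{v}u_{xx}^2dx$ directly, plus a good-sign $\|\theta_x\|_{L^2}^2$ term from the pressure cross product; $\|u_t\|_{L^2}$ is read off from the equation at the end. You test by $u_t=\sigma_x$ and use $\sigma=-P$ at both ends. Your dissipation is therefore $\|u_t\|_{L^2}^2$, and $\|u_{xx}\|_{L^2}^2$ must be recovered from the elliptic form of \eqref{equ-2}. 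As you note, that recovery is where $\|\theta v_x\|_{L^2}^2\le R_T\int_0^1\theta v_x^2dx$ brings in $R_T^{1+\varepsilon}$.

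The $\alpha$-errors also differ. Yours is the single term $\int_0^1\mu_tu_x^2/v\,dx$, handled by $\alpha M^{5/2}\le1$. The paper's come from $(1/\mu)_t$, $(1/\mu)_x$ and the weight $\theta^{-\alpha}$ in the heat substitution, and are handled by $\alpha M^{\beta+2}\le1$. The remaining steps are the same: substitute \eqref{internal-energy} for $\theta_t$ and integrate by parts using $\theta_x=0$, add \eqref{high-moment-identity}, apply Lemma~\ref{lm25}, and run Gronwall with weight $|P'|+\|u_x\|_{L^2}^2$.

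The heat substitution is mandatory, not a fallback. Bounding $\int\theta_tu_x/v$ through $\int_0^T\|\theta_t\|_{L^2}^2$ only gives the $M$-dependent bound in \eqref{space}, which would spoil the improvement of $X(0,T;M)$ to $X(0,T;M/2)$.

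One step does not work as written. The substitution produces $\int_0^1\kappa\theta_xu_xv_x/v^3\,dx$, which you propose to control with $\|\theta^\beta\theta_x\|_{L^\infty}$, or through $\int\theta^{2\beta}\theta_x^2v_x^2$. No such bound is available here. Even $\sup_t\|\theta^\beta\theta_x\|_{L^2}$ is obtained only in Lemma~\ref{lm28}, which relies on the present lemma, and an $L^\infty$ bound would require $\theta_{xx}$. Put the $L^\infty$ norm on $u_x$ instead:
\[
\Bigl|\int_0^1\frac{\kappa\theta_xu_xv_x}{v^3}dx\Bigr|
\le C\|v_x\|_{L^2}\|\theta^\beta\theta_x\|_{L^2}\bigl(\|u_x\|_{L^2}+\|u_x\|_{L^2}^{1/2}\|u_{xx}\|_{L^2}^{1/2}\bigr)
\le\eta\|u_{xx}\|_{L^2}^2+C_\eta\bigl[(\|v_x\|_{L^2}^2+\|v_x\|_{L^2}^4)\|u_x\|_{L^2}^2+\|\theta^\beta\theta_x\|_{L^2}^2\bigr].
\]
Then use $\sup_t\|v_x\|_{L^2}^2\le C_\varepsilon R_T^{\varepsilon}$ and \eqref{velocity-first-integral}. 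Absorb $\|\theta^\beta\theta_x\|_{L^2}^2$, and the $\|\theta_x\|_{L^2}^2$ from the elliptic recovery, into the high-moment dissipation in one of two ways:
\begin{itemize}
\item via $\theta^{2\beta}\le\zeta\theta^{2\beta+1}+C_\zeta\theta^{\beta-1-\varepsilon}$; or
\item via $\theta\ge c$, after multiplying \eqref{high-moment-identity} by a large constant and only then choosing $\eta$ in \eqref{high-moment-bound}.
\end{itemize}
With this repair your argument closes.
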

\begin{proof}
At $x=0,1$, $u_x+(Pv-R\theta)/\mu=0$.
Testing momentum by $-\partial_x[u_x+(Pv-R\theta)/\mu]$, we obtain
\begin{align}
&\frac d{dt}\int_0^1\left[\frac12u_x^2+
                        \frac{Pv-R\theta}{\mu}u_x\right]dx
 +\int_0^1\frac\mu v u_{xx}^2dx\notag\\
&=\int_0^1u_x\left(\frac{Pv-R\theta}{\mu}\right)_tdx\notag\\
 &\quad-\int_0^1\left[\left(\frac\mu v\right)_xu_x-
                  \left(\frac{R\theta}v\right)_x\right]
       \left[u_{xx}+\left(\frac{Pv-R\theta}{\mu}\right)_x\right]dx\notag\\
 &\quad-\int_0^1\frac\mu v u_{xx}
                   \left(\frac{Pv-R\theta}{\mu}\right)_xdx.
\label{physical-stress-identity}
\end{align}
The derivatives in this identity are
\begin{align*}
\left(\frac{Pv-R\theta}{\mu}\right)_x
 &=\frac P\mu v_x-\left(\frac{\alpha Pv}{\mu\theta}
                                  +\frac{R(1-\alpha)}\mu\right)\theta_x,\\
\left(\frac{Pv-R\theta}{\mu}\right)_t
 &=\frac{P'v}\mu+\frac P\mu u_x-
       \frac{\alpha Pv}{\mu\theta}\theta_t-
       \frac{R(1-\alpha)}\mu\theta_t,\\
&-R(1-\alpha)\int_0^1\frac{u_x\theta_t}\mu dx\\
 &\quad=\frac{R\tilde\kappa(1-\alpha)}{c_v\tilde\mu}
                \int_0^1\frac{\theta^{\beta-\alpha}u_{xx}\theta_x}{v}dx\\
 &\qquad-\frac{R\tilde\kappa\alpha(1-\alpha)}{c_v\tilde\mu}
                \int_0^1\frac{\theta^{\beta-\alpha-1}u_x\theta_x^2}{v}dx\\
 &\qquad-\frac{R(1-\alpha)}{c_v}\int_0^1\frac{u_x^3}{v}dx
 +\frac{R^2(1-\alpha)}{c_v\tilde\mu}
                \int_0^1\frac{\theta^{1-\alpha}u_x^2}{v}dx.
\end{align*}
In particular, the product of the pressure gradient and the last part of
the spatial derivative contributes
$-R^2(1-\alpha)\int\theta^{-\alpha}\theta_x^2/(\tilde\mu v)dx$.
Expansion of \eqref{physical-stress-identity} now gives
\begin{align}
&\frac d{dt}\int_0^1\left[\frac12u_x^2+
                   \frac{Pv-R\theta}\mu u_x\right]dx
 +c(\|u_{xx}\|_{L^2}^2+\|\theta_x\|_{L^2}^2)\notag\\
&\leq C|P'|(1+\|u_x\|_{L^2}^2)
 +C\{\|u_x\|_{L^2}^2+\|u_x\|_{L^2}\|u_{xx}\|_{L^2}+
                         \|u_{xx}\|_{L^2}\|\theta^\beta\theta_x\|_{L^2}\}\notag\\
&\quad+C(\|u_x\|_{L^2}+\|u_x\|_{L^2}^{1/2}\|u_{xx}\|_{L^2}^{1/2})
\notag\\
&\qquad\times(\|u_x\|_{L^2}^2+\|v_x\|_{L^2}\|u_{xx}\|_{L^2}+
                       \|v_x\|_{L^2}\|\theta_x\|_{L^2}+\|v_x\|_{L^2}^2)\notag\\
&\quad+C\|\theta v_x\|_{L^2}(\|u_{xx}\|_{L^2}+\|\theta_x\|_{L^2})
                   +C\|\theta^{1/2}v_x\|_{L^2}^2\notag\\
&\quad+C\alpha(1+M^\beta)
 (\|u_x\|_{L^2}^2+\|\theta_t\|_{L^2}^2+\|\theta_x\|_{L^2}^2)\notag\\
&\quad+C\alpha(1+M^\beta)\|\theta_x\|_{L^2}
       (\|u_x\|_{L^2}+\|u_{xx}\|_{L^2})
                           (\|u_{xx}\|_{L^2}+\|\theta_x\|_{L^2}).
\label{physical-stress-products}
\end{align}
For the last line, the bounds in \eqref{space} give
\begin{align*}
&\alpha(1+M^\beta)\int_0^T
 \|\theta_x\|_{L^2}(\|u_x\|_{L^2}+\|u_{xx}\|_{L^2})
                    (\|u_{xx}\|_{L^2}+\|\theta_x\|_{L^2})dt\\
&\quad\leq\alpha(1+M^\beta)\sup_{t\leq T}\|\theta_x\|_{L^2}
 \left[\int_0^T(\|u_x\|_{L^2}+\|u_{xx}\|_{L^2})^2dt\right]^{1/2}\\
&\qquad\times
 \left[\int_0^T(\|u_{xx}\|_{L^2}+\|\theta_x\|_{L^2})^2dt\right]^{1/2}
 \leq C\alpha M^{\beta+3/2}\leq C,\\
&\alpha(1+M^\beta)\int_0^T
 (\|u_x\|_{L^2}^2+\|\theta_t\|_{L^2}^2+\|\theta_x\|_{L^2}^2)dt
 \leq C\alpha M^{\beta+1}\leq C.
\end{align*}
For the mixed terms,
\begin{align*}
\|v_x\|_{L^2}\|u_x\|_{L^2}^{1/2}\|u_{xx}\|_{L^2}^{3/2}
 &\leq\eta\|u_{xx}\|_{L^2}^2+C_\eta\|v_x\|_{L^2}^4\|u_x\|_{L^2}^2,\\
\|v_x\|_{L^2}\|\theta_x\|_{L^2}\|u_x\|_{L^2}^{1/2}\|u_{xx}\|_{L^2}^{1/2}
 &\leq\eta\|u_{xx}\|_{L^2}^2+
 C_\eta\|v_x\|_{L^2}^{4/3}\|\theta_x\|_{L^2}^{4/3}\|u_x\|_{L^2}^{2/3}\\
 &\leq\eta\|u_{xx}\|_{L^2}^2+
 C_\eta(\|v_x\|_{L^2}^4\|u_x\|_{L^2}^2+\|\theta^\beta\theta_x\|_{L^2}^2),\\
\|u_x\|_{L^2}^{5/2}\|u_{xx}\|_{L^2}^{1/2}
 &\leq\eta\|u_{xx}\|_{L^2}^2+C_\eta(\|u_x\|_{L^2}^2+\|u_x\|_{L^2}^4),\\
\|v_x\|_{L^2}^2\|u_x\|_{L^2}^{1/2}\|u_{xx}\|_{L^2}^{1/2}
 &\leq\eta\|u_{xx}\|_{L^2}^2+C_\eta(\|v_x\|_{L^2}^4+\|u_x\|_{L^2}^2),\\
\theta^{2\beta}&\leq\eta\theta^{2\beta+1}
                         +C_{\eta,\varepsilon}\theta^{\beta-1-\varepsilon}.
\end{align*}
Adding \eqref{high-moment-identity} to
\eqref{physical-stress-products}, and using
\eqref{volume-gradient} and \eqref{velocity-first-integral}, we find
\begin{align}
&\int_0^1\left[\frac12u_x^2+\frac{Pv-R\theta}{\mu}u_x\right](t)dx\notag\\
 &\quad+c_v\int_{\theta>c_2}\left(\frac{\theta^{\beta+3}}{\beta+3}-c_2^{\beta+2}\theta
       +\frac{\beta+2}{\beta+3}c_2^{\beta+3}\right)(t)dx\notag\\
&\quad+C R_T \notag\\
&\quad+c\int_0^t\left(\|u_{xx}\|_{L^2}^2+\|\theta_x\|_{L^2}^2
                +\int_0^1\theta^{2\beta+1}\theta_x^2dx\right)ds
 \leq C_\varepsilon R_T ^{1+\varepsilon}\notag\\
&\quad+C\int_0^t(|P'|+\|u_x\|_{L^2}^2)
 \left\{\int_0^1\left[\frac12u_x^2+\frac{Pv-R\theta}{\mu}u_x\right]dx
                    +C R_T \right.\notag\\
&\hspace{36mm}\left.+c_v\int_{\theta>c_2}
 \left(\frac{\theta^{\beta+3}}{\beta+3}-c_2^{\beta+2}\theta
       +\frac{\beta+2}{\beta+3}c_2^{\beta+3}\right)dx\right\}ds.
\label{moment-gronwall}
\end{align}
The expression in braces is at least
$1+\|u_x\|_{L^2}^2/4+c\int\theta^{\beta+3}dx$ after increasing $C$.
Gronwall's inequality proves the required bounds except for $u_t$.
Finally,
\begin{align*}
\|u_t\|_{L^2}^2
&\leq C(\|u_{xx}\|_{L^2}^2+\|\theta_x\|_{L^2}^2+\|\theta v_x\|_{L^2}^2)
\\
&\quad+C(\|v_x\|_{L^2}^2+\alpha^2\|\theta_x\|_{L^2}^2)
              (\|u_x\|_{L^2}^2+\|u_x\|_{L^2}\|u_{xx}\|_{L^2})\\
&\leq C(\|u_{xx}\|_{L^2}^2+\|\theta_x\|_{L^2}^2+\|\theta v_x\|_{L^2}^2)
\\
&\quad+C(1+\|v_x\|_{L^2}^4+\alpha^2M+\alpha^4M^2)\|u_x\|_{L^2}^2.
\end{align*}
Integration proves \eqref{velocity-moment}.
\end{proof}

\begin{lemma}\label{lm28}
The temperature and its derivatives satisfy
\begin{equation}\label{thermal-output}
\sup_{t\leq T}\left(\|\theta(t)\|_{L^\infty}+
                       \|\theta^\beta\theta_x(t)\|_{L^2}^2\right)
 +\int_0^T(\|\theta_t\|_{L^2}^2+\|\theta_{xx}\|_{L^2}^2)dt\leq C.
\end{equation}
\end{lemma}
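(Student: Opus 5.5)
The plan is to derive a weighted energy inequality for $\theta^\beta\theta_x$, i.e. for $K(\theta)_x$ with $K(\theta)=\int_0^\theta\tilde\kappa s^\beta ds$. We combine it with Lemma~\ref{lm27} to get a bound of the form $\sup_t\|\theta\|_{L^\infty}^{\beta+1}\lesssim C_\varepsilon R_T^{1+\varepsilon}$. Since $\beta+1>1+\varepsilon$ for $\varepsilon<\beta$, this closes into $R_T\leq C$. For $\beta=0$ the scheme needs sharper powers; see below.

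\textbf{Step 1.} Multiply \eqref{internal-energy} by $K(\theta)_t=\tilde\kappa\theta^\beta\theta_t$ and integrate over $(0,1)$. The boundary term vanishes by \eqref{equ-6}. We obtain
\begin{equation*}
c_v\tilde\kappa\int_0^1\theta^\beta\theta_t^2dx+\frac12\frac d{dt}\int_0^1\frac{(K(\theta)_x)^2}{v}dx
=-\frac12\int_0^1\frac{u_x(K(\theta)_x)^2}{v^2}dx+\int_0^1\frac{\mu u_x^2-R\theta u_x}{v}\theta^\beta\theta_t\,dx .
\end{equation*}
The last term is bounded by
\[
\tfrac12 c_v\tilde\kappa\int\theta^\beta\theta_t^2+C\|\theta\|_{L^\infty}^{\beta}\left(\|u_x\|_{L^4}^4+\|\theta\|_{L^\infty}^2\|u_x\|_{L^2}^2\right),
\]
using $\theta^\alpha\leq2$ from \eqref{mu-range} and Lemma~\ref{lm23}. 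The first term is bounded by
\[
\|u_x\|_{L^\infty}\|K(\theta)_x\|_{L^2}^2\leq C(\|u_x\|_{L^2}+\|u_{xx}\|_{L^2})\,\|K(\theta)_x\|_{L^2}^2 .
\]
Gronwall can absorb it, because $\int_0^T\|u_x\|_{L^\infty}^2dt\leq C_\varepsilon R_T^{1+\varepsilon}$. Pure Gronwall, however, gives an exponential in $R_T$, which is useless. So instead we estimate
\[
\|u_x\|_{L^\infty}\|K_x\|^2\leq\|u_x\|_{L^\infty}\|\theta\|_{L^\infty}^{\beta}\|\theta^{\beta/2}\theta_x\|\,\|K_x\|,
\]
and use $\int_0^T\int\theta^{2\beta+1}\theta_x^2\leq C_\varepsilon R_T^{1+\varepsilon}$ from \eqref{velocity-moment}, together with $\sup\|u_x\|_{L^2}^2\leq C_\varepsilon R_T^{1+\varepsilon}$ and $\|u_x\|_{L^4}^4\leq\|u_x\|_{L^2}^3\|u_{xx}\|_{L^2}+\|u_x\|^4$. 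This yields
\[
\sup_t\|K(\theta)_x\|_{L^2}^2+\int_0^T\!\!\int\theta^\beta\theta_t^2\leq C_\varepsilon R_T^{a}
\]
with an explicit power $a$.

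\textbf{Step 2.} We convert this into an $L^\infty$ bound through the mean bound \eqref{positive-means}. At a point $y$ with $\theta(y,t)\leq c_2$ we have
\[
\theta^{\beta+1}(x,t)\leq C+C\int_0^1\theta^\beta|\theta_x|dx .
\]
A better route is to interpolate with \eqref{velocity-moment}:
\[
\theta^{\beta+1}\leq C+C\Big(\int\theta^{2\beta+1}\theta_x^2\Big)^{1/2}\Big(\int\theta^{-1}\Big)^{1/2},
\]
and $\theta^{-1}$ is bounded by Lemma~\ref{lm24}. Alternatively, we use
\[
\|\theta\|_{L^\infty}^{3\beta/2+1}\lesssim 1+\|\theta^{\beta/2}\|_{L^2}\|K_x\|\lesssim 1+R_T^{\beta/4}\,\|K_x\| .
\]
Thus $R_T^{\beta+1}\leq C_\varepsilon R_T^{a'}$, and we need $a'<\beta+1$. \textbf{This exponent bookkeeping is the main obstacle.} One must keep track of how each term's power of $R_T$ depends on $\beta$, using the $\sup_t\int\theta^{\beta+3}dx$ bound in Lemma~\ref{lm27} to soften $L^\infty$ factors (e.g. $\int\theta^{\beta+2}u_x^2\leq\|u_x\|_{L^\infty}^2\int\theta^{\beta+3}$). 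If the direct exponents fail, the fallback is to run Step 1 on $\theta^{\beta+1}$-type weights with $\varepsilon$ small, so that the $R_T^\varepsilon$ losses are strictly dominated. Once $R_T\leq C$ is known, Lemmas~\ref{lm25} and \ref{lm27} become uniform, and Step 1 then gives $\sup\|\theta^\beta\theta_x\|^2+\int\|\theta_t\|^2\leq C$ using $\theta\geq c$.

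\textbf{Step 3.} Finally, we read $\theta_{xx}$ off \eqref{internal-energy}:
\[
\tilde\kappa\theta^\beta\theta_{xx}/v=c_v\theta_t+R\theta u_x/v-\mu u_x^2/v-\tilde\kappa(\beta\theta^{\beta-1}\theta_x^2-\theta^\beta\theta_xv_x/v)/v .
\]
With $\theta$ bounded above and below, the quadratic terms are controlled by
\[
\|\theta_x\|_{L^\infty}^2\|\theta_x\|^2\leq\eta\|\theta_{xx}\|^2+C\|\theta_x\|^6
\]
and
\[
\|\theta_x\|_{L^\infty}^2\|v_x\|^2\leq\eta\|\theta_{xx}\|^2+C\|\theta_x\|^2 .
\]
The remaining terms $\|u_x\|_{L^4}^4$ and $\|\theta_t\|^2$ are already controlled. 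Integrating in time gives \eqref{thermal-output}.
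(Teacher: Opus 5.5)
\textbf{Verdict: genuine gap.} Your outline follows the paper's plan: the multiplier $\theta^\beta\theta_t$, a polynomial bound in $R_T$, a mean-point oscillation estimate to get $R_T\le C$, and elliptic recovery of $\theta_{xx}$. Your Step~3 is fine. What is missing is the step that actually proves the lemma, namely showing the exponents close. You leave it open, and none of the options you list closes as stated.

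\emph{The ``better route''.} This fails because \eqref{velocity-moment} controls $\int_0^1\theta^{2\beta+1}\theta_x^2dx$ only after integrating in time. It therefore cannot bound $\sup_t\|\theta\|_{L^\infty}$.

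\emph{The other two variants at $\beta=0$.} Both reduce to $R_T\lesssim 1+\sup_t\|\theta_x\|_{L^2}$, so they need $a<2$. Your stated inputs cannot give this:
\begin{itemize}
\item The source $\int_0^T\!\int_0^1\theta^{\beta+2}u_x^2dxdt$ alone forces $a\ge 2$ at $\beta=0$. Via $R_T^{\beta+2}\int_0^T\|u_x\|_{L^2}^2dt$ it gives $R_T^{2}$. Via your $\int\theta^{\beta+3}$ softening together with $\int_0^T\|u_x\|_{L^\infty}^2dt\le C_\varepsilon R_T^{1+\varepsilon}$ it gives $R_T^{2+2\varepsilon}$.
\item The convective term is worse. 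Complete $\|u_x\|_{L^\infty}\|\theta\|_{L^\infty}^{\beta}\|\theta^{\beta/2}\theta_x\|_{L^2}\|K_x\|_{L^2}$ in the natural way: pull out $\sup_t\|K_x\|_{L^2}$, apply Cauchy--Schwarz in time, then Young. This gives $\sup_t\|K_x\|_{L^2}^2\lesssim R_T^{2\beta+2+2\varepsilon}$. Step~2 then yields only $R_T^{\beta+1}\lesssim R_T^{\beta+1+\varepsilon}$, which fails for every $\beta$.
\end{itemize}

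\textbf{The missing idea: oscillate $\theta^{\beta+3/2}$.} The paper oscillates $\theta^{\beta+3/2}$ rather than $\theta^{\beta+1}$, using
$\int_0^1\theta^{\beta+1/2}|\theta_x|dx\le\|\theta^{1/2}\|_{L^2}\|\theta^\beta\theta_x\|_{L^2}$ with $\|\theta^{1/2}\|_{L^2}^2=\int_0^1\theta dx\le c_2$. The mean bound \eqref{positive-means} thus gains half a power of $R_T$ for free. The closing condition becomes $a<2\beta+3$, uniformly in $\beta\ge0$.

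\textbf{How the paper gets $a$.} In the convective term it splits $\|u_x\|_{H^1}\le R_T^{-1-\varepsilon}\|u_x\|_{H^1}^2+\frac14R_T^{1+\varepsilon}$.
\begin{itemize}
\item The first piece is a Gronwall coefficient with $\int_0^TR_T^{-1-\varepsilon}\|u_x\|_{H^1}^2dt\le C_\varepsilon$. The exponential factor is therefore $e^{C_\varepsilon}$, independent of $T$ and $R_T$.
\item The second piece is integrated directly against $\int_0^T\!\int_0^1\theta^{2\beta}\theta_x^2dxdt$.
\end{itemize}
This gives $a=\beta+2+2\varepsilon$, hence $R_T^{\beta+3/2}\le C_\varepsilon R_T^{\beta/2+1+\varepsilon}$. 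That closes for $0<\varepsilon<(\beta+1)/2$, with no separate treatment of $\beta=0$.

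\textbf{Rescuing your route.} Your $\theta^{\beta+1}$ route can be saved, but only with bookkeeping you have not done. Two sharpenings are needed:
\begin{itemize}
\item Use $\int_0^T\|u_x\|_{L^\infty}^2dt\le C+C\big(\int_0^T\|u_{xx}\|_{L^2}^2dt\big)^{1/2}\le C_\varepsilon R_T^{(1+\varepsilon)/2}$, which follows from $\int_0^T\|u_x\|_{L^2}^2dt\le C$ in Lemma~\ref{lm26}.
\item Use the factor $\|\theta\|_{L^\infty}^{\beta/2}$, not $\|\theta\|_{L^\infty}^{\beta}$.
\end{itemize}
With these you get $a=\beta+\frac32+\frac32\varepsilon$, which is below $2\beta+2$ for small $\varepsilon$.
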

\begin{proof}
Multiplying \eqref{internal-energy} by $\theta^\beta\theta_t$ gives
\begin{align}
&\frac{\tilde\kappa}{2}\frac d{dt}\int_0^1
                       \frac{\theta^{2\beta}\theta_x^2}{v}dx
 +c_v\int_0^1\theta^\beta\theta_t^2dx\notag\\
&=\int_0^1\frac{\theta^\beta}{v}(\mu u_x^2-R\theta u_x)\theta_tdx
 -\frac{\tilde\kappa}{2}\int_0^1
                       \frac{u_x\theta^{2\beta}\theta_x^2}{v^2}dx\notag\\
&\leq\frac{c_v}{2}\int_0^1\theta^\beta\theta_t^2dx
 +C\int_0^1(\theta^\beta u_x^4+\theta^{\beta+2}u_x^2)dx
 +C\|u_x\|_{H^1}\int_0^1\theta^{2\beta}\theta_x^2dx.
\label{thermal-gradient-identity}
\end{align}
Fix $0<\varepsilon<\min\{1, (\beta+1)/2\}$. The preceding lemmas imply
\begin{align*}
&\int_0^T\|u_x\|_{H^1}^2dt\leq C_\varepsilon R_T ^{1+\varepsilon},\\
&\int_0^T\int_0^1\theta^\beta u_x^4dxdt
 \leq C R_T ^\beta\sup_t\|u_x\|_{L^2}^2
                         \int_0^T\|u_x\|_{H^1}^2dt
 \leq C_\varepsilon R_T ^{\beta+2+2\varepsilon},\\
&\int_0^T\int_0^1\theta^{\beta+2}u_x^2dxdt
 \leq C R_T ^{\beta+2},\qquad
 \int_0^T\int_0^1\theta^{2\beta}\theta_x^2dxdt
 \leq C_\varepsilon R_T ^{\beta+1+\varepsilon},\\
&\|u_x\|_{H^1}\leq R_T ^{-1-\varepsilon}\|u_x\|_{H^1}^2
                             +\tfrac14 R_T ^{1+\varepsilon}.
\end{align*}
Substituting into \eqref{thermal-gradient-identity} and integrating,
\begin{align*}
&\frac{\tilde\kappa}{2}\int_0^1
             \frac{\theta^{2\beta}\theta_x^2}{v}(x,t)dx
 +\frac{c_v}{2}\int_0^t\int_0^1\theta^\beta\theta_t^2dxds\\
&\quad\leq C_\varepsilon R_T ^{\beta+2+2\varepsilon}
 +C R_T ^{-1-\varepsilon}\int_0^t\|u_x\|_{H^1}^2
          \int_0^1\frac{\theta^{2\beta}\theta_x^2}{v}dxds,\\
&  R_T ^{-1-\varepsilon}
                         \int_0^T\|u_x\|_{H^1}^2dt\leq C_\varepsilon,\\
&\sup_{t\leq T}\int_0^1\frac{\theta^{2\beta}\theta_x^2}{v}dx
 +\int_0^T\int_0^1\theta^\beta\theta_t^2dxdt\\
&\quad\leq C_\varepsilon R_T ^{\beta+2+2\varepsilon}
 \exp\left\{C R_T ^{-1-\varepsilon}
                              \int_0^T\|u_x\|_{H^1}^2dt\right\}.
\end{align*}
Consequently,
\begin{equation}\label{thermal-before-upper}
\sup_{t\leq T}\|\theta^\beta\theta_x\|_{L^2}^2
 +\int_0^T\|\theta^{\beta/2}\theta_t\|_{L^2}^2dt
 \leq C_\varepsilon R_T ^{\beta+2+2\varepsilon}.
\end{equation}
At a mean-temperature point,
\begin{align*}
\theta(x,t)^{\beta+3/2}
&\leq c_2^{\beta+3/2}+(\beta+3/2)
                       \int_0^1\theta^{\beta+1/2}|\theta_x|dx
 \leq C+C\|\theta^\beta\theta_x(t)\|_{L^2},\\
 R_T ^{\beta+3/2}
&\leq C_\varepsilon R_T ^{\beta/2+1+\varepsilon},
 \qquad  R_T ^{(\beta+1)/2-\varepsilon}\leq C_\varepsilon.
\end{align*}
Thus $\sup\theta\leq C$. Expanding the heat equation,
\begin{align}
\theta_{xx}
 &=\frac{c_vv}{\kappa}\theta_t-\frac\mu\kappa u_x^2
       +\frac{R\theta}{\kappa}u_x+\frac{v_x}{v}\theta_x
                                      -\frac\beta\theta\theta_x^2,
 \label{thermal-elliptic}\\
\|v_x\theta_x\|_{L^2}+\|\theta_x^2\|_{L^2}
 &\leq(\|v_x\|_{L^2}+\|\theta_x\|_{L^2})
       (\|\theta_x\|_{L^2}+\sqrt{2\|\theta_x\|_{L^2}\|\theta_{xx}\|_{L^2}})
\notag\\
 &\leq\eta\|\theta_{xx}\|_{L^2}+C_\eta\|\theta_x\|_{L^2},\notag\\
\|\theta_{xx}\|_{L^2}
 &\leq C(\|\theta_t\|_{L^2}+\|u_x\|_{H^1}+\|\theta_x\|_{L^2}).
 \notag
\end{align}
Squaring and integrating proves \eqref{thermal-output}.
\end{proof}

\subsection{Higher order estimates}
Lemmas~\ref{lm21}--\ref{lm28} imply the following corollary.
\begin{cor}\label{co29}
There exists $C>1$, independent of $T$, such that
\begin{align*}
&C^{-1}\leq v(x,t),\theta(x,t)\leq C,\\
&\sup_{t\leq T}\|(v_x,u_x,\theta_x)\|_{L^2}^2
 +\int_0^T\|(v_x,u_x,u_t,u_{xx},\theta_x,\theta_t,\theta_{xx})\|_{L^2}^2dt\leq C.
\end{align*}
\end{cor}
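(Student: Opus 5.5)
The corollary is an assembly of Lemmas~\ref{lm21}--\ref{lm28}. The plan is first to close the temperature upper bound, which removes every $R_T$-dependence from the earlier estimates, and then read off each piece.

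\textbf{Pointwise bounds.} Lemma~\ref{lm23} gives $C^{-1}\leq v\leq C$ with $C$ independent of $T$. Lemma~\ref{lm24} gives $\theta\geq c$. Lemma~\ref{lm28} gives $\sup_t\|\theta\|_{L^\infty}\leq C$, so $R_T\leq C$ in \eqref{RT}. The only point to check is that Lemma~\ref{lm28} does not use its own conclusion. Its proof derives \eqref{thermal-before-upper} with right side $C_\varepsilon R_T^{\beta+2+2\varepsilon}$. The mean-temperature-point argument then gives
\[
R_T^{\beta+3/2}\leq C_\varepsilon R_T^{\beta/2+1+\varepsilon},
\]
and since $\varepsilon<(\beta+1)/2$ the exponent on the left is strictly larger. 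This forces $R_T\leq C$. Because $\alpha$ is fixed by \eqref{positive-smallness}, all constants are independent of $T$.

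\textbf{Integral bounds.} With $R_T\leq C$ substituted:
\begin{itemize}
\item \eqref{volume-gradient} gives $\sup_t\|v_x\|_{L^2}^2\leq C$ (for $\beta=0$, fix $\varepsilon=1/4$), and $\int_0^T\|v_x\|_{L^2}^2dt\leq C$ because $\theta\geq c$ in $\int\!\!\int\theta v_x^2$.
\item \eqref{velocity-moment} gives $\sup_t\|u_x\|_{L^2}^2+\int_0^T(\|u_{xx}\|^2+\|u_t\|^2)dt\leq C$.
\item \eqref{velocity-first-integral} gives $\int_0^T\|u_x\|_{L^2}^2dt\leq C$.
\item \eqref{thermal-output} gives $\sup_t\|\theta^\beta\theta_x\|_{L^2}^2\leq C$; since $\theta\geq c$, this yields $\sup_t\|\theta_x\|_{L^2}^2\leq C$. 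It also gives $\int_0^T(\|\theta_t\|^2+\|\theta_{xx}\|^2)dt\leq C$.
\item $\int_0^T\|\theta_x\|_{L^2}^2dt$ follows from \eqref{enhanced-dissipation}: with $\varepsilon=1/2$, the weight $\theta^{\beta-3/2}$ is bounded below on $[c,C]$.
\end{itemize}

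\textbf{Main obstacle.} The delicate step is the non-circularity of the $R_T$ bootstrap. Every lemma is proved under the a priori class $X(0,T;M)$, where $M$ enters only through $\alpha M^{k}\leq1$. We must confirm that the final constants depend on neither $M$ nor $T$. This is what later allows choosing $M$ larger than those constants and then $\varepsilon_0$ via \eqref{positive-smallness}, closing the continuation argument.
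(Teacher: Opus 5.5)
Your proposal is correct and takes the paper's approach: the paper proves Corollary~\ref{co29} simply by collecting Lemmas~\ref{lm21}--\ref{lm28}. The one implicit point you make explicit is that the closure $R_T^{\beta+3/2}\leq C_\varepsilon R_T^{\beta/2+1+\varepsilon}$ in Lemma~\ref{lm28} is non-circular; this is legitimate because $R_T\leq 1+M$ is finite a priori, and it makes the $R_T$-dependent bounds of Lemmas~\ref{lm25}--\ref{lm27} uniform. Your term-by-term bookkeeping, e.g.\ $\int_0^T\|v_x\|_{L^2}^2dt$ from $\int_0^T\!\int_0^1\theta v_x^2\,dxdt$ with $\theta\geq c$, and $\sup_t\|\theta_x\|_{L^2}$ from $\|\theta^\beta\theta_x\|_{L^2}$, matches what the paper leaves to the reader.
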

\begin{lemma}\label{lm29}
There is a constant $C>0$ such that
\begin{equation}\label{weighted-output}
\sup_{0<t\leq T}\sigma(t)\|(u_t,\theta_t,u_{xx},\theta_{xx})\|_{L^2}^2
 +\int_0^T\sigma(t)\|(u_{xt},\theta_{xt})\|_{L^2}^2dt\leq C.
\end{equation}
\end{lemma}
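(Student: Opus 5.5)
We use time-weighted energy estimates for the time derivatives, with all coefficients and first-order quantities controlled by Corollary~\ref{co29}. The weight $\sigma(t)=\min\{1,t\}$ is needed because the data are only $H^1$: no initial bound on $u_t$ or $\theta_t$ is available. Throughout, $C^{-1}\le v,\theta\le C$, so $\mu,\kappa,\mu_\theta,\kappa_\theta$ are bounded above and below, and the factors $\alpha M^{k}$ are bounded by \eqref{positive-smallness}.

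\emph{Step 1: the velocity.} Differentiate \eqref{equ-2} in $t$, multiply by $\sigma u_t$ and integrate over $[0,1]$.
\begin{itemize}
\item The boundary term is $[\sigma u_t(\mu u_x/v-R\theta/v)_t]_0^1=-\sigma P'(t)(u_t(1,t)-u_t(0,t))$, using \eqref{equ-5}.
\item We bound it by $C\sigma|P'|\,\|u_t\|_{L^\infty}\le \eta\sigma\|u_{xt}\|^2_{L^2}+C\sigma\|u_t\|_{L^2}^2+C|P'|^2$.
\item The remaining terms are products of $u_{xt}$ with $\mu_t u_x/v$, $\mu u_x^2/v^2$, $\theta_t/v$ and $\theta u_x/v^2$.
\item Here $\mu_t=\alpha\mu\theta^{-1}\theta_t$. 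We bound these by $\eta\sigma\|u_{xt}\|_{L^2}^2+C\sigma(\|\theta_t\|_{L^2}^2+\|u_x\|_{L^\infty}^2\|(u_x,\theta_t)\|_{L^2}^2)$.
\item For $\|\theta_t\|_{L^\infty}$ we use $\|u_x\|_{L^\infty}^2\le C\|u_x\|_{L^2}\|u_x\|_{H^1}$, and $\|\theta_t\|_{L^\infty}^2\le C\|\theta_t\|_{L^2}(\|\theta_t\|_{L^2}+\|\theta_{xt}\|_{L^2})$.
\end{itemize}
This yields
\[
\frac{d}{dt}\bigl(\sigma\|u_t\|_{L^2}^2\bigr)+c\sigma\|u_{xt}\|_{L^2}^2\le \|u_t\|_{L^2}^2+\eta\sigma\|\theta_{xt}\|_{L^2}^2+C\sigma\|u_t\|_{L^2}^2\|u_x\|_{H^1}^2+C(\cdots),
\]
where $(\cdots)$ collects integrable terms: by Corollary~\ref{co29} and $P'\in L^2$, all of $\|u_t\|_{L^2}^2$, $\|\theta_t\|_{L^2}^2$, $\|u_x\|_{H^1}^2$ and $|P'|^2$ are in $L^1(0,T)$, uniformly in $T$.

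\emph{Step 2: the temperature.} Differentiate \eqref{internal-energy} in $t$ and multiply by $\sigma\theta_t$. Since $\theta_x=0$ at the endpoints, the term $(\kappa\theta_x/v)_t$ vanishes there, so there is no boundary term. This gives
\[
\frac{c_v}{2}\frac{d}{dt}\bigl(\sigma\|\theta_t\|^2_{L^2}\bigr)+c\sigma\|\theta_{xt}\|_{L^2}^2\le \|\theta_t\|_{L^2}^2+\int_0^1\sigma|\theta_{xt}|\,\bigl|(\kappa_\theta\theta_t/v-\kappa u_x/v^2)\theta_x\bigr|dx+\int_0^1\sigma\,|\theta_t|\,\bigl|(\mu u_x^2/v-R\theta u_x/v)_t\bigr|dx.
\]
\begin{itemize}
\item The term $\kappa_\theta\theta_t\theta_x$ is handled by $\|\theta_t\|_{L^\infty}\|\theta_x\|_{L^2}$.
\item The source term contains $\mu u_xu_{xt}\theta_t$, which is bounded by $\eta\sigma\|u_{xt}\|^2_{L^2}+C\sigma\|u_x\|_{L^\infty}^2\|\theta_t\|_{L^2}^2$.
\item The $\mu_t$-term in the source carries an extra factor $\alpha$ and is absorbed in the same way.
\end{itemize}

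\emph{Step 3: closing and second derivatives.} Add the two inequalities, with $\eta$ small enough that the cross terms $\eta\sigma\|u_{xt}\|^2_{L^2}$ and $\eta\sigma\|\theta_{xt}\|^2_{L^2}$ are absorbed. Then apply Gronwall's inequality to $\sigma(\|u_t\|_{L^2}^2+\|\theta_t\|_{L^2}^2)$, with the $L^1(0,T)$ coefficient $C(1+\|u_x\|_{H^1}^2+\|\theta_x\|_{H^1}^2)$. The integral of this coefficient is bounded independently of $T$, and $\sigma(0)=0$. This gives the $u_t,\theta_t$ and $u_{xt},\theta_{xt}$ parts of \eqref{weighted-output}.

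For the second derivatives:
\begin{itemize}
\item The $u_{xx}$ bound comes from solving \eqref{equ-2} for $u_{xx}$: $\|u_{xx}\|_{L^2}\le C(\|u_t\|_{L^2}+\|\theta_x\|_{L^2}+\|v_x\|_{L^2}+\|(v_x,\theta_x)\|_{L^2}\|u_x\|_{L^\infty})$. The last term is absorbed via $\|u_x\|_{L^\infty}\le \eta\|u_{xx}\|_{L^2}+C\|u_x\|_{L^2}$.
\item The $\theta_{xx}$ bound follows from \eqref{thermal-elliptic} in the same way.
\end{itemize}

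\emph{Main obstacle.} The delicate points are the pressure boundary term and the quartic terms $u_x^2u_{xt}\theta_t$ and $\theta_t^2\theta_x\theta_{xt}$. The boundary term is exactly where $P'\in L^2$ is used. For the quartic terms, the interpolation must leave only coefficients of the form $\|u_x\|_{H^1}^2$ or $\|\theta_x\|_{H^1}^2$, which are time-integrable by Corollary~\ref{co29}; only then does Gronwall's inequality give a $T$-independent constant.
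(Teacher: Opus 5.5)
\textbf{Overall.} Your plan is the paper's proof. You differentiate momentum and heat in $t$ and test with $u_t$ and $\theta_t$. You use $[(\mu u_x-R\theta)/v]_t=-P'(t)$ and $\theta_{xt}=0$ at the endpoints. You then close a $\sigma$-weighted coupled inequality for $\|u_t\|_{L^2}^2+c_v\|\theta_t\|_{L^2}^2$ and read off $u_{xx},\theta_{xx}$ from the equations. You treat $\kappa_\theta\theta_t\theta_x\theta_{xt}$ through $\|\theta_t\|_{L^\infty}$ and the uniform bound on $\|\theta_x\|_{L^2}$, where the paper uses $\|\theta_x\|_{L^\infty}\|\theta_t\|_{L^2}$; that variant is equally valid.

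\textbf{The Gronwall step fails as written.} In Step 3 you use the multiplier $C(1+\|u_x\|_{H^1}^2+\|\theta_x\|_{H^1}^2)$ and claim its time integral is bounded independently of $T$. It is not, since $\int_0^T C\,dt=CT$, so you would get a factor $e^{CT}$. A $T$-independent constant is exactly what this lemma must deliver: it enters \eqref{positive-complete-estimate}, the choice of $M$ and $\varepsilon_0$ in \eqref{positive-threshold}, and the large-time argument.

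The fix is already implicit in your Step 1: never put a constant into the multiplier. Every term of the form $C\sigma(\|u_t\|_{L^2}^2+\|\theta_t\|_{L^2}^2)$ belongs in the forcing. Such terms come from the pressure boundary term, from $R\theta u_{xt}\theta_t/v$, from the $\kappa_\theta$ term in your version, and from $Ru_x\theta_t^2/v$ after writing $\|u_x\|_{L^\infty}\le C(1+\|u_x\|_{H^1}^2)$. They go into the forcing because $\sigma\le1$ and $\int_0^T\|(u_t,\theta_t)\|_{L^2}^2dt\le C$ by Corollary~\ref{co29}. Only the products with $\|u_x\|_{H^1}^2$ or $\|\theta_x\|_{H^1}^2$ stay in the multiplier, which is exactly the structure of \eqref{coupled-time}.

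\textbf{The initial time.} Saying ``$\sigma(0)=0$'' needs care. With $H^1$ data, $\|u_t(0)\|_{L^2}$ need not be finite, so you must either argue for smooth approximate solutions or do as the paper does. The paper integrates from a sequence $s_j\downarrow0$ with $s_j\|(u_t,\theta_t)(s_j)\|_{L^2}^2\to0$. Such a sequence exists because $\int_0^1\|(u_t,\theta_t)\|_{L^2}^2dt<\infty$.

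\textbf{Minor.} The terms $u_x^2u_{xt}\theta_t$ and $\theta_t^2\theta_x\theta_{xt}$ in your ``main obstacle'' do not occur. The actual nonlinear terms are $u_xu_{xt}\theta_t$, $u_x^3\theta_t$, $\alpha u_x^2\theta_t^2$ and $\theta_t\theta_x\theta_{xt}$, and your estimates already handle them.
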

\begin{proof}
Differentiate the momentum and heat equations with respect to $t$.
The boundary conditions give
\begin{align}
&\frac12\frac d{dt}\|u_t\|_{L^2}^2+\int_0^1\frac\mu v u_{xt}^2dx
 =-P'(t)[u_t]_0^1\notag\\
&\quad-\int_0^1\left[
   \left(\frac{\alpha\mu u_x}{v\theta}-\frac Rv\right)\theta_t
                    -\frac{\mu u_x-R\theta}{v^2}u_x\right]u_{xt}dx,
 \label{momentum-time-identity}\\
&\frac{c_v}{2}\frac d{dt}\|\theta_t\|_{L^2}^2+
                       \int_0^1\frac\kappa v\theta_{xt}^2dx
 =-\int_0^1\frac\kappa v
                   \left(\frac{\beta\theta_t}\theta-\frac{u_x}v\right)
                                      \theta_x\theta_{xt}dx\notag\\
&\quad+\int_0^1\frac{\alpha\mu\theta^{-1}u_x^2-Ru_x}{v}\theta_t^2dx
 +\int_0^1\frac{2\mu u_x-R\theta}{v}u_{xt}\theta_tdx\notag\\
 &\quad-\int_0^1\frac{\mu u_x^2-R\theta u_x}{v^2}u_x\theta_tdx
 =\sum_{j=1}^4I_j.\label{thermal-time-identity}
\end{align}
By interval interpolation and the preceding lemmas,
\begin{align*}
&|P'[u_t]_0^1|\leq|P'|\|u_{xt}\|_{L^2}
                    \leq\eta\|u_{xt}\|_{L^2}^2+C_\eta|P'|^2,\\
&\|u_x\|_{L^\infty}^4
 \leq\bigl(\|u_x\|_{L^2}^2+2\|u_x\|_{L^2}\|u_{xx}\|_{L^2}\bigr)^2
 \leq C\|u_x\|_{H^1}^2,\\
&\|u_x^2\|_{L^2}^2+\|u_x^3\|_{L^2}^2
 \leq(\|u_x\|_{L^\infty}^2+\|u_x\|_{L^\infty}^4)\|u_x\|_{L^2}^2
 \leq C\|u_x\|_{H^1}^2,\\
&\left|\int_0^1\left[
    \left(\frac{\alpha\mu u_x}{v\theta}-\frac Rv\right)\theta_t
                  -\frac{\mu u_x-R\theta}{v^2}u_x\right]u_{xt}dx\right|\\
&\quad\leq C\left[(1+\alpha\|u_x\|_{L^\infty})\|\theta_t\|_{L^2}
                     +\|u_x^2\|_{L^2}+\|u_x\|_{L^2}\right]\|u_{xt}\|_{L^2}\\
&\quad\leq\eta\|u_{xt}\|_{L^2}^2
 +C_\eta(\|\theta_t\|_{L^2}^2+\|u_x\|_{H^1}^2)
 +C_\eta\alpha^2\|u_x\|_{H^1}^2\|\theta_t\|_{L^2}^2.
\end{align*}
For the four thermal terms,
\begin{align*}
|I_1|&\leq C\int_0^1(|\theta_t|+|u_x|)|\theta_x||\theta_{xt}|dx\\
&\leq C\left(\|\theta_x\|_{L^\infty}\|\theta_t\|_{L^2}
                +\|u_x\|_{L^\infty}\|\theta_x\|_{L^2}\right)\|\theta_{xt}\|_{L^2}\\
&\leq\eta\|\theta_{xt}\|_{L^2}^2
 +C_\eta\|\theta_x\|_{H^1}^2\|\theta_t\|_{L^2}^2
 +C_\eta\|u_x\|_{H^1}^2,\\
|I_2|&\leq C\int_0^1(u_x^2+|u_x|)\theta_t^2dx\\
&\leq C(\|u_x\|_{L^\infty}^2+\|u_x\|_{L^\infty})\|\theta_t\|_{L^2}^2
 \leq C(1+\|u_x\|_{H^1}^2)\|\theta_t\|_{L^2}^2,\\
|I_3|&\leq C(1+\|u_x\|_{L^\infty})\|u_{xt}\|_{L^2}\|\theta_t\|_{L^2}\\
&\leq\eta\|u_{xt}\|_{L^2}^2
       +C_\eta(1+\|u_x\|_{H^1}^2)\|\theta_t\|_{L^2}^2,\\
|I_4|&\leq C\int_0^1(|u_x|^3+u_x^2)|\theta_t|dx\\
&\leq C(\|u_x^3\|_{L^2}+\|u_x^2\|_{L^2})\|\theta_t\|_{L^2}
 \leq C\|u_x\|_{H^1}^2+C\|\theta_t\|_{L^2}^2.
\end{align*}
Using these estimates in \eqref{momentum-time-identity}--\eqref{thermal-time-identity},
and choosing $\eta$ first, we obtain
\begin{align}
&\frac d{dt}\left(\frac12\|u_t\|_{L^2}^2+\frac{c_v}{2}\|\theta_t\|_{L^2}^2\right)
              +c\|(u_{xt},\theta_{xt})\|_{L^2}^2\notag\\
&\quad\leq C(|P'|^2+\|u_x\|_{H^1}^2+\|\theta_x\|_{L^2}^2+\|\theta_t\|_{L^2}^2)
 +C(\|u_x\|_{H^1}^2+\|\theta_x\|_{H^1}^2)
               (\|u_t\|_{L^2}^2+\|\theta_t\|_{L^2}^2).\label{coupled-time}
\end{align}
By Corollary~\ref{co29} and \eqref{th1-pt},
\[
 \int_0^T\left(|P'|^2+\|u_x\|_{H^1}^2+\|\theta_x\|_{H^1}^2
                     +\|(u_t,\theta_t)\|_{L^2}^2\right)dt\leq C.
\]
There is therefore a sequence $s_j\downarrow0$ such that
$s_j\|(u_t,\theta_t)(s_j)\|_{L^2}^2\to0$.
Multiplication by $\sigma$, integration from $s_j$, and then $j\to\infty$ give
\begin{align*}
&\sup_{0<t\leq T}\sigma(t)\|(u_t,\theta_t)(t)\|_{L^2}^2
                  +\int_0^T\sigma\|(u_{xt},\theta_{xt})\|_{L^2}^2dt\\
&\quad\leq C\left[1+\int_0^{\min\{1,T\}}\|(u_t,\theta_t)\|_{L^2}^2dt\right]
 \exp\left\{C\int_0^T(\|u_x\|_{H^1}^2+\|\theta_x\|_{H^1}^2)dt\right\}
 \leq C.
\end{align*}
The derivative of the weight is
$\sigma'=\mathbf1_{(0,1)}$ almost everywhere. Finally,
\begin{align*}
u_{xx}&=\frac v\mu u_t-
          \left(\frac{\alpha\theta_x}\theta-\frac{v_x}v\right)u_x
               +\frac R\mu\theta_x-\frac{R\theta}{\mu v}v_x,\\
\|u_{xx}\|_{L^2}&\leq\tfrac12\|u_{xx}\|_{L^2}+
                C\|(u_t,u_x,v_x,\theta_x)\|_{L^2},\\
\|\theta_{xx}\|_{L^2}^2&\leq C(1+\|\theta_t\|_{L^2}^2+\|u_{xx}\|_{L^2}^2).
\end{align*}
Together with \eqref{thermal-elliptic}, these inequalities prove
\eqref{weighted-output}.
\end{proof}
\subsection{Stability of the solutions}
The preceding lemmas give constants $c,C>0$, independent of $M,T$,
such that
\begin{align}
&c\leq v,\theta\leq C,\qquad
 \sup_{t\leq T}\|(v,u,\theta)\|_{H^1}^2\notag\\
&\quad+\int_0^T\|(v_x,u_x,u_{xx},\theta_x,\theta_{xx},\theta_t)\|_{L^2}^2dt
            +\int_0^T\sigma\|\theta_{xt}\|_{L^2}^2dt\leq C.
\label{positive-complete-estimate}
\end{align}
The sum-norm convention uses
$(\sum_{j=1}^n a_j)^2\leq n\sum_{j=1}^na_j^2$ in this estimate.
Choose, in order,
\begin{equation}\label{positive-threshold}
M\geq4\max\{2,C,c^{-1},M_0^2,\underline v^{-1},\underline\theta^{-1}\},
\qquad
0<\varepsilon_0\leq\min\{\tfrac12,M^{-5/2},M^{-\beta-2}\}.
\end{equation}
For $0\leq\alpha\leq\varepsilon_0$, the estimates strictly improve
$X(0,T;M)$ to $X(0,T;M/2)$.
Continuity and Proposition~\ref{local-property} exclude a finite first
exit and give a solution on every finite interval. The constants in
\eqref{positive-threshold} depend on $P$ and are independent of $T$.
Increasing $T$ in \eqref{positive-complete-estimate} and
\eqref{weighted-output} proves \eqref{th1-vu}.

For $h=v_x,u_x,\theta_x$, respectively,
\begin{align}
&\int_1^\infty\left(\|h(t)\|_{L^2}^2+
                 \left|\frac d{dt}\|h(t)\|_{L^2}^2\right|\right)dt
 \leq\int_1^\infty\left(\|h\|_{L^2}^2+2\|h\|_{L^2}\|h_t\|_{L^2}\right)dt
 \leq C,\notag\\
&\lim_{t\to\infty}\|(v_x,u_x,\theta_x)(t)\|_{L^2}=0.
\label{positive-gradient-decay}
\end{align}
By \eqref{coupled-time} and \eqref{weighted-output},
\begin{align*}
&\int_1^\infty\left(\|u_t\|_{L^2}^2+c_v\|\theta_t\|_{L^2}^2\right)dt\leq C,\\
&\int_1^\infty\left[\frac d{dt}
 \left(\tfrac12\|u_t\|_{L^2}^2+\tfrac{c_v}{2}\|\theta_t\|_{L^2}^2\right)\right]_+dt\\
&\quad\leq C\int_1^\infty\left(|P'|^2+\|u_x\|_{H^1}^2
                 +\|\theta_x\|_{L^2}^2+\|\theta_t\|_{L^2}^2\right)dt\\
&\qquad+C\sup_{t\geq1}\|(u_t,\theta_t)\|_{L^2}^2
       \int_1^\infty\left(\|u_x\|_{H^1}^2+\|\theta_x\|_{H^1}^2\right)dt
 \leq C.
\end{align*}
The first integral and the positive-variation bound imply
\begin{align}
&\lim_{t\to\infty}(\|u_t(t)\|_{L^2}^2+c_v\|\theta_t(t)\|_{L^2}^2)=0,
 \qquad\lim_{t\to\infty}\|u_{xx}(t)\|_{L^2}=0,\notag\\
&\|u_x^2\|_{L^2}\leq\|u_x\|_{L^2}^2+
                \sqrt2\|u_x\|_{L^2}^{3/2}\|u_{xx}\|_{L^2}^{1/2}\longrightarrow0,
 \notag\\
&\|\theta_{xx}\|_{L^2}\leq C(\|\theta_t\|_{L^2}+\|u_x^2\|_{L^2}+
                               \|u_x\|_{L^2}+\|\theta_x\|_{L^2})\longrightarrow0.
\label{positive-second-decay}
\end{align}
For the mean values $\bar v=\int_0^1v dx$ and
$\bar\theta=\int_0^1\theta dx$, \eqref{integrated-momentum} gives
\begin{align}
R\bar\theta-P\bar v
 &=\int_0^1\mu u_xdx-\int_0^1v(x,t)\int_0^xu_t(y,t)dy\,dx,
 \label{mean-pressure-defect}\\
|R\bar\theta-P\bar v|&\leq C(\|u_x\|_{L^2}+\|u_t\|_{L^2}),\notag\\
\int_1^\infty\left|\frac d{dt}(R\bar\theta-P\bar v)^2\right|dt
 &\leq C\left\|R\bar\theta-P\bar v\right\|_{L^2(1,\infty)}
                 \|(\theta_t,u_x)\|_{L^2((0,1)\times(1,\infty))}
                  +C\|P'\|_{L^1}\leq C.\notag
\end{align}
It follows that $R\bar\theta-P\bar v\to0$. Meanwhile,
\begin{align*}
&(c_v+R)\bar\theta
  =\int_0^1\left(P(0)v_0+c_v\theta_0+\frac12u_0^2\right)dx
       -\frac12\int_0^1u^2dx\\
 &\qquad+\int_0^tP'(s)\bar v(s)ds+(R\bar\theta-P\bar v)\\
&\quad\longrightarrow
 \int_0^1\left(P(0)v_0+c_v\theta_0+\frac12u_0^2\right)dx
       -\frac12\hat u^2+\int_0^\infty P'(s)\bar v(s)ds
                    =(c_v+R)\hat\theta,\\
&\bar v\longrightarrow\frac{R\hat\theta}{\bar P}=\hat v,
 \qquad \hat\theta\geq c>0,\\
&\|v-\bar v\|_{H^1}+\|u-\hat u\|_{H^1}
                         +\|\theta-\bar\theta\|_{H^1}
 \leq C\|(v_x,u_x,\theta_x)\|_{L^2}\longrightarrow0.
\end{align*}
This proves \eqref{th1-tinft} and Theorem~\ref{tm11}.
\section{Proof of Theorem 1.2}
Use physical time $s$ and make the change of variables
\begin{equation}\label{time-change}
t=\log(1+s),\quad \check v(x,t)=e^{-t}v(x,e^t-1),\quad
\check u(x,t)=u(x,e^t-1),\quad \check\theta(x,t)=\theta(x,e^t-1).
\end{equation}
We omit the checks in this section. The equations become
\begin{equation}\label{normalized-system}
\left\{\begin{aligned}
&v_t+v=u_x,\qquad u_t=\left(\frac{\mu u_x-R\theta}{v}\right)_x,\\
&c_v\theta_t=\left(\frac{\kappa\theta_x}{v}\right)_x
                        +\frac{\mu u_x^2-R\theta u_x}{v},\\
&\left.\frac{\mu u_x-R\theta}{v}\right|_{x=0,1}
                    =-e^tP(e^t-1),\qquad \theta_x|_{x=0,1}=0.
\end{aligned}\right.
\end{equation}
Set
\begin{equation}\label{centered-w}
w=u-\int_0^1u_0dy-\int_0^xv(y,t)dy
                           +\int_0^1\int_0^xv(y,t)dy\,dx.
\end{equation}
Then
\begin{equation}\label{normalized-w-system}
\begin{gathered}
\int_0^1w dx=0,\qquad v_t=w_x,
 \qquad w_t+w=u_t=\left[\frac\mu v
                         \left(w_x+v-\frac{R\theta}\mu\right)\right]_x,\\
c_v\theta_t=\left(\frac{\kappa\theta_x}{v}\right)_x
 +\frac\mu v\left(w_x+v-\frac{R\theta}\mu\right)^2
 +\frac{R\theta}{v}\left(w_x+v-\frac{R\theta}\mu\right).
\end{gathered}
\end{equation}
For finite $T$ let $(v,u,w,\theta)\in Y(0,T;M)$, where
\begin{equation}\label{normalized-space}
\begin{aligned}
Y(0,T;M)=\bigg\{&(v,u,w,\theta)\in C([0,T];H^1),\quad
       M^{-1}\leq v,\theta\leq M,\\
&\sup_{t\leq T}\|(u,\theta,v)\|_{H^1}^2+
        \int_0^T\left\|w_x+v-\frac{R\theta}\mu\right\|_{L^1}dt\leq M,\\
&\int_0^T\|(v_x,w_x,\theta_x,\theta_t,\theta_{xx})\|_{L^2}^2dt
                     +\int_0^T\sigma\|\theta_{xt}\|_{L^2}^2dt\leq M\bigg\}.
\end{aligned}
\end{equation}
For the local solution, Proposition~\ref{local-property} and
\eqref{time-change} give the finite-time estimate
\[
 \int_0^T\|(u_t,u_{xx},w_t,w_{xx})\|_{L^2}^2dt\leq C(T,M).
\]
Until the parameter choice at the end, assume
\begin{equation}\label{normalized-smallness}
0\leq\alpha\leq\tfrac12,\qquad
\alpha(2M)^{5/2}\leq\eta_0,\qquad
\alpha(2M)^{\beta+2}\leq1,
\end{equation}
where $0<\eta_0\leq1$ is fixed in Lemma~\ref{lm32} independently of $M,T$.

Fix $0<\delta<\varepsilon/2$. By \eqref{th2-pt} and $P(s)\to0$,
\begin{align}
\int_0^\infty(1+s)^{1+\delta}|P'(s)|ds
&\leq\left(\int_0^\infty(1+s)^{3+\varepsilon}|P'(s)|^2ds\right)^{1/2}
                    (\varepsilon-2\delta)^{-1/2}\leq C,\notag\\
\sup_{s\geq0}(1+s)^{1+\delta}|P(s)|
&\leq\int_0^\infty(1+s)^{1+\delta}|P'(s)|ds\leq C.\notag
\end{align}
Changing variables by $s=e^t-1$, we obtain
\begin{align}
&\int_0^\infty e^{(1+\delta)t}P(e^t-1)dt
 =\int_0^\infty(1+s)^\delta P(s)ds\leq C,\notag\\
&\int_0^\infty e^{(2+\delta)t}|P'(e^t-1)|dt
 =\int_0^\infty(1+s)^{1+\delta}|P'(s)|ds\leq C,\notag\\
&\int_0^\infty e^{(4+2\delta)t}|P'(e^t-1)|^2dt
 =\int_0^\infty(1+s)^{3+2\delta}|P'(s)|^2ds\leq C,\notag\\
&\sup_{t\geq0}e^{(1+\delta)t}|P(e^t-1)|\leq C,\notag\\
&\int_0^\infty\left[e^{2t}|P(e^t-1)|^2+
              \left|\frac d{dt}(e^tP(e^t-1))\right|^2\right]dt\notag\\
&\quad\leq3\int_0^\infty e^{2t}|P(e^t-1)|^2dt
                  +2\int_0^\infty e^{4t}|P'(e^t-1)|^2dt\notag\\
&\quad\leq\frac3{2\delta}
      \left(\sup_{t\geq0}e^{(1+\delta)t}|P(e^t-1)|\right)^2
                  +2\int_0^\infty(1+s)^3|P'(s)|^2ds\leq C.
\label{pressure-weights}
\end{align}
Here $C$ also depends on the fixed $\delta$ and the weighted pressure
integrals in \eqref{th2-pt}.

\subsection{A priori estimates}
\begin{lemma}\label{lm31}
There exist $0<c_1<c_2$, with $c_2\geq1$, and $C>0$ such that
\begin{align}
&\sup_{t\leq T}\int_0^1
       \left[e^tP(e^t-1)v+u^2+\theta-\log\theta-1\right]dx
                         +\int_0^T\mathcal V(t)dt\leq C,\label{normalized-entropy}\\
&c_1\leq\int_0^1\theta(x,t)dx\leq c_2,
 \notag\\
 &\mathcal V(t)=\int_0^1\left[\frac\mu{v\theta}
             \left(w_x+v-\frac{R\theta}\mu\right)^2
                         +\frac\kappa{v\theta^2}\theta_x^2\right]dx.
\label{normalized-dissipation}
\end{align}
\end{lemma}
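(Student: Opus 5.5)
We follow the scheme of Lemma~\ref{lm21}, now in the normalized variables \eqref{normalized-system}--\eqref{normalized-w-system}. Write $p(t)=e^tP(e^t-1)$. The plan has three steps: an energy balance, an entropy inequality, and bounds on the means.

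\emph{Energy balance.} We integrate the momentum equation of \eqref{normalized-system} over $[0,1]$. The boundary condition gives $\frac{d}{dt}\int_0^1u\,dx=-p+p=0$, so $\int_0^1u\,dx=\int_0^1u_0dx$. Adding the $u$-multiplied momentum equation to the heat equation and integrating, the boundary flux is $[(\mu u_x-R\theta)u/v]_0^1=-p\,(u(1)-u(0))=-p\int_0^1u_x\,dx$. Using $u_x=v_t+v$ we obtain
\[
\frac{d}{dt}\int_0^1\Big(\tfrac12u^2+c_v\theta+p v\Big)dx=p'\int_0^1v\,dx-p\int_0^1v\,dx .
\]
Note that $p'=e^tP+e^{2t}P'$. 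The $e^tP$ part of $p'$ cancels the term $-p\int v$, so the right side is $e^{2t}P'(e^t-1)\int_0^1v\,dx$. Since $p\ge0$, we control $\int v$ by splitting into times where $p$ is large and times where it is small. On the set where $p\ge e^{-\delta t}$ we use $\int v\le e^{\delta t}\int pv$. Elsewhere we use the representation $\int_0^1 v\,dx=e^{-t}\int_0^1v_0dx+e^{-t}\int_0^t e^{\tau}\int_0^1u_x\,dx\,d\tau$. Here $\int_0^1u_x\,dx=u(1)-u(0)\le 2\|u\|_{L^\infty}$, which is bounded by the energy. This gives $\int_0^1 v\,dx\le C(1+t)\sup_{\tau\le t}\|u(\tau)\|_{L^2}$ up to lower-order terms.

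\emph{Closing the energy bound.} Combining the two estimates on $\int v$ with the weights in \eqref{pressure-weights}, in particular $\int_0^\infty e^{(2+\delta)t}|P'|dt\le C$, Gronwall's inequality yields $\sup_t\int_0^1(\tfrac12u^2+c_v\theta+pv)dx\le C$.

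\emph{Entropy inequality.} Dividing the heat equation in \eqref{normalized-w-system} by $\theta$ and writing $u_x=w_x+v+(\text{mean terms})$, we obtain
\[
c_v(\log\theta)_t=\Big(\tfrac{\kappa\theta_x}{v\theta}\Big)_x+\tfrac{\kappa\theta_x^2}{v\theta^2}+\tfrac{\mu u_x^2}{v\theta}-\tfrac{Ru_x}{v}.
\]
Since $u_x=v_t+v$, the last term equals $R(\log v)_t+R$. We then rewrite $\frac{\mu u_x^2}{v\theta}-\frac{Ru_x}{v}$ as $\frac{\mu}{v\theta}\big(u_x-\frac{R\theta}{\mu}\big)^2+\frac{Ru_x}{v}-\frac{R^2\theta}{\mu v}$ and check how this is related to $w_x+v-R\theta/\mu$. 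The two differ by the spatially constant quantity $\int_0^1v\,dx$ coming from \eqref{centered-w}. Subtracting the entropy identity from the energy balance then gives a relative entropy identity for $\int_0^1[\frac12u^2+c_v(\theta-\log\theta-1)+pv]$ whose dissipation is $\mathcal V(t)$. The $v$-logarithm is absorbed because $\int_0^1\log v\,dx$ is controlled through the constant-in-$x$ identity $\frac{d}{dt}\int\log v=\int u_x/v-1$.

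\emph{Means of $\theta$.} Finally, $c_1\le\int_0^1\theta\,dx\le c_2$ follows from the Jensen argument used at the end of Lemma~\ref{lm21}.

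\emph{Main obstacle.} The hard part is matching the dissipation exactly to the combination $w_x+v-R\theta/\mu$, and controlling the residual terms involving $\log v$ and the constant $R$, which is not integrable in time. My first attempt would be to use the quantity $\int_0^1(c_v\log\theta+R\log v)\,dx$ directly and show that its growth rate $Rt$ is compensated exactly by the energy terms in the normalized variables, since $\theta$ does not decay. If this fails, I would instead define the entropy with the time-dependent reference state $\theta\approx(\tilde\mu A/R)^{1/(1-\alpha)}$ suggested by \eqref{th2-A}. The entropy identity and the boundary terms arising from $p$ have to balance exactly in either version.
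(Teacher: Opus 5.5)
Your proposal has a genuine gap at its central step, the entropy estimate. First, a correction: \eqref{centered-w} gives $w_x=u_x-v$ exactly, so $w_x+v-R\theta/\mu=u_x-R\theta/\mu$, with no extra constant $\int_0^1v\,dx$. With your multiplier $\theta^{-1}$ and $u_x/v=(\log v)_t+1$ one gets
\[
\frac{d}{dt}\int_0^1(c_v\log\theta-R\log v)\,dx=\mathcal V(t)+R\int_0^1\Bigl(1-\frac{R\theta}{\mu v}\Bigr)dx .
\]
The last term has no sign and no known time integrability at this stage; $v-R\theta/\mu\in L^2(0,T;L^2)$ is only obtained in Lemma~\ref{lm39}. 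The identity $\frac{d}{dt}\int_0^1\log v\,dx=\int_0^1u_x/v\,dx-1$ does not control it, and neither fallback you list supplies the missing mechanism.

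That mechanism is the space-integrated momentum identity. Integrate the momentum equation from $0$ to $x$, use the boundary condition, and then integrate over $[0,1]$:
\[
\frac{d}{dt}\int_0^1\!\!\int_0^xu\,dy\,dx=\tilde\mu\int_0^1\frac{\theta^\alpha}{v}\Bigl(u_x-\frac{R\theta}{\mu}\Bigr)dx+e^tP(e^t-1).
\]
This turns the unsigned cross term into the time derivative of a quantity bounded by $\|u\|_{L^2}$, plus a forcing that is integrable because $\int_0^\infty e^tP(e^t-1)\,dt=\int_0^\infty P(s)\,ds$. For the cancellation to be exact when $\alpha>0$, the paper multiplies the heat equation by $-\theta^{\alpha-1}$ rather than $\theta^{-1}$; with $\theta^{-1}$ a residual proportional to $1-\theta^\alpha$ survives. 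The cross term is then $-R\int_0^1\theta^\alpha v^{-1}(u_x-R\theta/\mu)\,dx$, which is removed by adding $R/\tilde\mu$ times the identity. This gives \eqref{normalized-alpha-entropy}, and $\log v$ never appears. Finally, $(1-\theta^\alpha)/\alpha\geq\frac12(-\log\theta)$ for $\theta\leq1$ (using $\theta^\alpha\geq\frac12$) gives $\int_0^1(-\log\theta)_+dx\leq C$. This yields the $\theta-\log\theta-1$ bound and, by Jensen, the constant $c_1$. Your appeal to Lemma~\ref{lm21} for the means is valid only once this is in place.

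The energy step also has a gap. You bound $u(1)-u(0)$ by $2\|u\|_{L^\infty}$ and call this controlled by the energy, but the energy controls only $\|u\|_{L^2}$. An $L^\infty$ bound is available only through the bootstrap constant $M$. The constant in this lemma must be independent of $M$, otherwise the improvement of $Y(0,T;M)$ to $Y(0,T;M/2)$ fails. Splitting $\int_0^1u_x\,dx=\int_0^1(u_x-R\theta/\mu)\,dx+\int_0^1R\theta/\mu\,dx$ does not rescue this either, since the first piece needs $\mathcal V$ and a pointwise bound on $v$, neither available yet.

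The paper instead returns to physical time and uses \eqref{volume-work}, obtained by multiplying the once-integrated momentum equation by $v$. This bounds $\int_0^1v(x,s)\,dx$ by $\eta(1+s)\|\tilde u(s)\|_{L^2}^2+C_\eta[1+s+\int_0^s\int_0^1(\tilde u^2+\theta+|P|v)]$, so the volume growth is driven by $R\theta$. The $\alpha$-term is controlled using $\alpha M^{5/2}\leq\eta_0$. Weighting by $|P(s)|+(1+s)^{-1-\delta}$ and applying Gronwall twice with $\int_0^\infty(1+s)^{1+\delta}|P'|\,ds<\infty$ gives \eqref{normalized-mean-energy}.
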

\begin{proof}
First return to physical time $s$ in \eqref{volume-work}.
The transformed temporary bounds give
\begin{align*}
&\left|\alpha\int_0^s\int_0^1
           \theta^{\alpha-1}\theta_\xi v\,dxd\xi\right|\\
&\quad\leq2\alpha M^2\int_0^{\log(1+s)}e^t\|\check\theta_t(t)\|_{L^2}dt\\
 &\quad\leq\sqrt2\alpha M^{5/2}\sqrt{(1+s)^2-1}\leq C(1+s),\\
&\int_0^1v(x,s)dx\\
&\quad\leq\eta(1+s)\|\tilde u(s)\|_{L^2}^2+
 C_\eta\left[1+s+\int_0^s\int_0^1
                    (\tilde u^2+\theta+|P|v)dxd\xi\right].
\end{align*}
Multiply the latter by $|P(s)|+(1+s)^{-1-\delta}$ and use
\eqref{energy-work}, choosing $\eta$ with
$C\eta\sup_s(1+s)(|P(s)|+(1+s)^{-1-\delta})\leq1/2$.
Then
\begin{align}
&\int_0^1\left[(|P(s)|+(1+s)^{-1-\delta})v+
                                      \tilde u^2+\theta\right]dx\notag\\
&\leq C+C(|P(s)|+(1+s)^{-1-\delta})
    \int_0^s\int_0^1\left[(|P(\xi)|+(1+\xi)^{-1-\delta})v+
                                     \tilde u^2+\theta\right]dxd\xi\notag\\
&\quad+C\int_0^s(1+\xi)^{1+\delta}|P'(\xi)|
      \int_0^1\left[(|P(\xi)|+(1+\xi)^{-1-\delta})v+
                                      \tilde u^2+\theta\right]dxd\xi.
\label{normalized-mean-volterra}
\end{align}
Applying the integrating factor first to the time integral in the
second line gives
\begin{align*}
&\int_0^s\int_0^1\left[(|P(\xi)|+(1+\xi)^{-1-\delta})v+
                                     \tilde u^2+\theta\right]dxd\xi\\
&\leq Cs\exp\left\{C\int_0^\infty(|P(\xi)|+(1+\xi)^{-1-\delta})d\xi\right\}\\
&\quad\times\left[1+\int_0^s(1+\xi)^{1+\delta}|P'(\xi)|
       \int_0^1\left[(|P(\xi)|+(1+\xi)^{-1-\delta})v+
                                    \tilde u^2+\theta\right]dxd\xi\right].
\end{align*}
Since $s(|P(s)|+(1+s)^{-1-\delta})\leq C$, substitution into
\eqref{normalized-mean-volterra} and a second Gronwall inequality yield
\begin{equation}\label{normalized-mean-energy}
\sup_{t\leq T}\int_0^1(u^2+\theta+e^tP(e^t-1)v)dx
 \leq C\exp\left\{C\int_0^\infty(1+s)^{1+\delta}|P'(s)|ds\right\}
 \leq C.
\end{equation}

Multiply \eqref{normalized-w-system}$_3$ by $-\theta^{\alpha-1}$,
and integrate momentum once in space. The two identities are
\begin{align*}
&-c_v\frac d{dt}\int_0^1\frac{\theta^\alpha-1}{\alpha}dx
 +\int_0^1\left[\frac{\tilde\mu\theta^{2\alpha-1}}v
       \left(w_x+v-\frac{R\theta}\mu\right)^2+
    (1-\alpha)\frac{\tilde\kappa\theta^{\beta+\alpha-2}\theta_x^2}{v}\right]dx\\
&\hspace{30mm}=-R\int_0^1\frac{\theta^\alpha}{v}
                         \left(w_x+v-\frac{R\theta}\mu\right)dx,\\
&\frac d{dt}\int_0^1\int_0^xu(y,t)dy\,dx
 =\tilde\mu\int_0^1\frac{\theta^\alpha}{v}
                     \left(w_x+v-\frac{R\theta}\mu\right)dx+e^tP(e^t-1).
\end{align*}
At $\alpha=0$, $(\theta^\alpha-1)/\alpha$ denotes $\log\theta$.
Adding the identities gives the exact cancellation
\begin{align}
&\frac d{dt}\left[-c_v\int_0^1\frac{\theta^\alpha-1}{\alpha}dx
                +\frac R{\tilde\mu}\int_0^1\int_0^xu\,dy\,dx\right]\notag\\
&\quad+\int_0^1\left[\frac{\tilde\mu\theta^{2\alpha-1}}v
          \left(w_x+v-\frac{R\theta}\mu\right)^2
  +(1-\alpha)\frac{\tilde\kappa\theta^{\beta+\alpha-2}\theta_x^2}{v}\right]dx
               =\frac R{\tilde\mu}e^tP(e^t-1).\label{normalized-alpha-entropy}
\end{align}
Using \eqref{normalized-mean-energy},
$|(\theta_0^\alpha-1)/\alpha|\leq
\max\{1,\sqrt{\theta_0}\}|\log\theta_0|$, and
$(z^\alpha-1)/\alpha\leq z-1$, we find
\begin{align*}
&-\int_0^1\frac{\theta^\alpha-1}\alpha dx\\
 &\quad+\frac1{c_v}\int_0^t\int_0^1\left[\frac{\tilde\mu\theta^{2\alpha-1}}v
                 \left(w_x+v-\frac{R\theta}\mu\right)^2
 +(1-\alpha)\frac{\tilde\kappa\theta^{\beta+\alpha-2}\theta_x^2}{v}\right]dxds
 \leq C,\\
&\frac{1-\theta^\alpha}\alpha
 =\int_0^{-\log\theta}e^{-\alpha z}dz
 \geq\tfrac12(-\log\theta),\qquad 0<\theta\leq1,\\
&\int_0^1(-\log\theta)_+dx\leq C,
 \\
 &\int_0^1\theta dx\geq\exp\left(\int_0^1\log\theta dx\right)\geq c_1>0.
\end{align*}
Finally $\theta^\alpha\geq1/2$ and $1-\alpha\geq1/2$ compare
the dissipation in \eqref{normalized-alpha-entropy} with
$\mathcal V/4$, proving the lemma.
\end{proof}

\begin{lemma}\label{lm32}
For a fixed $\eta_0>0$ in \eqref{normalized-smallness},
$C^{-1}\leq v\leq C$ on $[0,1]\times[0,T]$.
\end{lemma}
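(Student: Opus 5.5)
The plan is to imitate Lemmas~\ref{lm22}--\ref{lm23} in the normalized variables \eqref{normalized-system}: derive an explicit representation of $v$ and read off both bounds from it. Since $v_t+v=u_x$, integrating the momentum equation in $x$ over $(0,x)$ and using the boundary condition at $x=0$ gives
\[
\Bigl(\int_0^x u\,dy\Bigr)_t=\frac{\mu(v_t+v)-R\theta}{v}+e^tP(e^t-1).
\]
Writing $\mu v_t/v=\tilde\mu(\theta^\alpha\log v)_t-\alpha\tilde\mu\theta^{\alpha-1}\theta_t\log v$, I will obtain
\[
\tilde\mu(\theta^\alpha\log v)_t=\Bigl(\int_0^x(u-u_0)dy\Bigr)_t-\tilde\mu\theta^\alpha+\frac{R\theta}{v}-e^tP(e^t-1)+\alpha\tilde\mu\theta^{\alpha-1}\theta_t\log v .
\]
As in \eqref{volume-representation}, this yields
\[
v=\frac1{\mathcal A\mathcal B}+\frac R{\tilde\mu}\int_0^t\frac{\mathcal A(s)\mathcal B(s)}{\mathcal A(t)\mathcal B(t)}\theta(s)\,ds .
\]
Here $\mathcal A$ is as in \eqref{A}. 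The new factor $\mathcal B$ contains $\exp\{\int_0^t(\theta^\alpha+e^\tau P(e^\tau-1)/\tilde\mu)d\tau-\alpha\int_0^t\theta^{\alpha-1}\theta_\tau\log v\,d\tau\}$. The extra term $\int_0^t\theta^\alpha d\tau$, which comes from the normalization, is what replaces $\int P$ in Section~2 and supplies the decay.

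\textbf{Kernel bounds.} $\mathcal A$ is bounded above and below, for three reasons. First, $|(\theta^\alpha-1)\log v|\le 2\alpha(\log M)^2\le C$ by \eqref{normalized-smallness}. Second, $|\int_0^x(u-u_0)dy|\le C$ by the bound on $\int u^2$ in \eqref{normalized-entropy}. Third, $v_0$ is bounded above and below. For $\mathcal B$ I will use \eqref{mu-range}, which gives $\tfrac12\le\theta^\alpha\le 2$, so that $\int_s^t\theta^\alpha\in[\tfrac12(t-s),2(t-s)]$. The pressure term satisfies $0\le\int_s^t e^\tau P\,d\tau\le C$ by \eqref{pressure-weights}.

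The delicate piece is the $\alpha$-term. Exactly as in Step~1 of Lemma~\ref{lm23}, the $Y(0,T;M)$ bounds give
\[
\Bigl|\alpha\int_s^t\theta^{\alpha-1}\theta_\tau\log v\,d\tau\Bigr|\le C\alpha M^{5/2}(1+\sqrt{t-s})\le C_*\eta_0(1+\sqrt{t-s}).
\]
Here $C_*$ is independent of $M,T$. The point is that this time the bound must be absorbed into the linear decay $\tfrac12(t-s)$ rather than merely being bounded. Using $C_*\eta_0\sqrt{t-s}\le\tfrac14(t-s)+C_*^2\eta_0^2$ and fixing $\eta_0\le1$, I get
\[
C^{-1}e^{-C(t-s)}\le\frac{\mathcal A(s)\mathcal B(s)}{\mathcal A(t)\mathcal B(t)}\le Ce^{-(t-s)/4}.
\]
I expect this absorption, with $\eta_0$ chosen independently of $M$ and $T$, to be the main obstacle. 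It is precisely why \eqref{normalized-smallness} carries $\eta_0$.

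\textbf{Upper and lower bounds.} For the upper bound I will repeat Step~2 of Lemma~\ref{lm23}. At a point where $\theta$ equals its mean, Lemma~\ref{lm31} (the bounds $c_1\le\int\theta\le c_2$ together with $\mathcal V$) gives
\[
c-C\mathcal V(t)\max_x v\le\theta(x,t)\le C+C\mathcal V(t)\max_x v .
\]
This follows from $\int_{\theta>c_1/2}\theta^{-1/2}|\theta_x|\le C(\mathcal V\int v\theta^{1-\beta})^{1/2}$. Substituting into the representation and using the decaying kernel gives $\max v(t)\le C+C\int_0^t\mathcal V\max v\,ds$. Gronwall's inequality with $\int_0^T\mathcal V\le C$ then gives $v\le C$.

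For the lower bound I will follow Step~3 of Lemma~\ref{lm23}. Choose $L$ with $cL-C\int_0^\infty\mathcal V\ge cL/2$. For $t\ge L$, the lower kernel bound gives $v\ge C^{-1}e^{-CL}\int_{t-L}^t\theta\,ds\ge C^{-1}$. For $t\le L$, the first term $1/(\mathcal A\mathcal B)\ge C^{-1}e^{-CL}\underline v$ suffices. All constants depend only on the data and $P$, not on $M$ or $T$.
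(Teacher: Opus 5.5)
Your argument is correct, but it takes a different route from the paper's. The paper uses the integrating factor $\mathcal D$ of \eqref{normalized-factor}, which keeps the representation \eqref{normalized-representation} linear in $v$. The damping is then exactly $e^{-(t-s)}$, the source is $\theta^{1-\alpha}$, and the commutator $\frac{\alpha}{\tilde\mu}\theta^{-\alpha-1}\theta_s v\int_0^x(u-u_0)dy$ is left as an additive remainder of either sign. Using $v\le M$, this remainder is bounded by $C\alpha M^{5/2}\le C\eta_0$ in \eqref{normalized-volume-error}. That is harmless for the upper bound. For the lower bound, however, it must be dominated by the positive terms $C^{-1}e^{-L}cL/2$ and $C^{-1}e^{-L}\underline v$, and this is the only place where $\eta_0$ has to be small.

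Your logarithmic representation is Lemma~\ref{lm22} with $\int_0^t\theta^\alpha d\tau$ replacing $\tilde\mu^{-1}\int_0^tP\,d\tau$. It instead places the commutator $\alpha\theta^{\alpha-1}\theta_\tau\log v$ in the exponent of the kernel. There its growth $O(1+\sqrt{t-s})$ is absorbed by Young's inequality into the linear decay $\frac12(t-s)$ for any bounded $\eta_0$, and the lower bound stays positive with no further restriction. Your route therefore proves the lemma under $\eta_0\le1$ alone.

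Your remark that this absorption is the main obstacle, and the reason $\eta_0$ appears in \eqref{normalized-smallness}, is a misreading. On your route the absorption is automatic; the need for a small $\eta_0$ is an artifact of the paper's linear representation.

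What the paper's version buys is an explicit kernel that does not involve $v$ and has exact decay rate $1$. Yours is implicit through $\log v$ and $v^{\theta^\alpha-1}$, but under the bootstrap bounds of $Y(0,T;M)$ this costs you nothing.
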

\begin{proof}
Define the integrating factor
\begin{equation}\label{normalized-factor}
\mathcal D(x,t)=\exp\left\{\frac{\theta(x,t)^{-\alpha}}{\tilde\mu}
       \int_0^x(u-u_0)(y,t)dy-
       \frac1{\tilde\mu}\int_0^te^sP(e^s-1)\theta(x,s)^{-\alpha}ds\right\}.
\end{equation}
Then $C^{-1}\leq\mathcal D\leq C$, and direct differentiation gives
\begin{align}
v(x,t)={}&e^{-(t-s_0)}\frac{\mathcal D(x,t)}{\mathcal D(x,s_0)}v(x,s_0)
 +\frac R{\tilde\mu}\int_{s_0}^t e^{-(t-s)}
             \frac{\mathcal D(x,t)}{\mathcal D(x,s)}\theta(x,s)^{1-\alpha}ds
 \notag\\
&+\frac\alpha{\tilde\mu}\int_{s_0}^te^{-(t-s)}
 \frac{\mathcal D(x,t)}{\mathcal D(x,s)}
          \theta^{-\alpha-1}\theta_s v\int_0^x(u-u_0)dy\,ds.
\label{normalized-representation}
\end{align}
The last integral has either sign. Its absolute value is bounded by
\begin{align}
C\alpha M^2\int_{s_0}^te^{-(t-s)}\|\theta_t(s)\|_{L^\infty} ds
&\leq C\alpha M^{5/2}
 \left[1+\left(\int_0^te^{-2(t-s)}\sigma(s)^{-1/2}ds\right)^{1/2}\right]
 \notag\\
&\leq C\alpha M^{5/2}\leq C\eta_0.\label{normalized-volume-error}
\end{align}
The temperature oscillation calculation in
\eqref{temperature-oscillation}, using only the mean bounds in
Lemma~\ref{lm31}, gives
\begin{equation}\label{normalized-oscillation}
c-C\mathcal V(t)\max_xv(x,t)\leq\theta(x,t)
             \leq C+C\mathcal V(t)\max_xv(x,t).
\end{equation}
With $s_0=0$ in \eqref{normalized-representation},
\begin{align*}
\max_xv(x,t)&\leq C+C\eta_0+
               C\int_0^t\mathcal V(s)\max_xv(x,s)ds
 \leq C\exp\left(C\int_0^T\mathcal V(s)ds\right)\leq C.
\end{align*}
This bound uses only $\eta_0\leq1$. Fix $L\geq1$ such that
$cL-C\int_0^T\mathcal V(s)ds\geq cL/2$.
Since $\theta^{1-\alpha}\geq\theta/2$, for $t\geq L$ take $s_0=t-L$ and retain the positive temperature source;
for $t\leq L$, retain the initial term. The resulting inequalities are
\begin{align*}
v(x,t)&\geq C^{-1}e^{-L}\int_{t-L}^t\theta(x,s)ds-C\eta_0
                 \geq C^{-1}e^{-L}cL/2-C\eta_0,
                      &&t\geq L,\\
v(x,t)&\geq C^{-1}e^{-L}\underline v-C\eta_0,
                      &&0\leq t\leq L.
\end{align*}
Choose $\eta_0$ after these fixed constants so that each right side is
at least half its positive first term. This proves the lemma.
\end{proof}

\begin{lemma}\label{lm33}
For every $0<\varepsilon<1$,
\begin{equation}\label{normalized-enhanced}
\theta\geq c>0,\qquad
\int_0^T\int_0^1\left[\theta^{\alpha-\varepsilon}
             \left(w_x+v-\frac{R\theta}\mu\right)^2
                      +\theta^{\beta-1-\varepsilon}\theta_x^2\right]dxdt
 \leq C_\varepsilon,\quad 0<\varepsilon<1.
\end{equation}
\end{lemma}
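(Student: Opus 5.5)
This lemma is the normalized counterpart of Lemma~\ref{lm24}, and I would prove it by the same two tests. Throughout, $\mu u_x$ is replaced by $\mu(w_x+v-R\theta/\mu)$, which is the effective stress in \eqref{normalized-w-system}. We already have $C^{-1}\leq v\leq C$ from Lemma~\ref{lm32}, the mean bounds $c_1\leq\int_0^1\theta dx\leq c_2$, and $\int_0^T\mathcal V dt\leq C$ from Lemma~\ref{lm31}. Write $\Sigma=w_x+v-R\theta/\mu$. By \eqref{normalized-w-system}, the heat equation reads $c_v\theta_t=(\kappa\theta_x/v)_x+\mu\Sigma^2/v+R\theta\Sigma/v$.

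For the lower bound I would multiply this equation by $-\theta^{-2}(\theta^{-1}-c_1^{-1})_+^p$ with $p\geq3$. The viscous term gives the good quantity $\int\mu\Sigma^2\theta^{-2}(\theta^{-1}-c_1^{-1})_+^p/v\,dx$, and the conduction terms have a good sign because $\theta_x=0$ at the endpoints. The source term is $R\int\Sigma v^{-1}\theta^{-1}(\theta^{-1}-c_1^{-1})_+^p dx$. I absorb it by Cauchy--Schwarz into half the viscous term plus $C\int(\theta^{-1}-c_1^{-1})_+^p dx$; since $\mu\geq\tilde\mu/2$, the constant does not depend on $p$.

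Next I evaluate at a point where $\theta$ equals its mean. This gives
\[
\|(\sqrt{c_1}-\sqrt\theta)_+\|_{L^\infty}^2\leq C\mathcal V(t)\int_0^1 v\theta\,dx\leq C\mathcal V(t),
\]
using the upper bound on $v$ and the mean bound on $\theta$. The same $p$-uniform comparison as in Lemma~\ref{lm24} then yields
\[
\frac{d}{dt}\Bigl(1+\|(\theta^{-1}-c_1^{-1})_+\|_{L^{p+1}}^{p+1}\Bigr)\leq C(p+1)\mathcal V\Bigl(1+\|(\theta^{-1}-c_1^{-1})_+\|_{L^{p+1}}^{p+1}\Bigr).
\]
Gronwall gives a bound in $L^{p+1}$ uniform in $p$, and letting $p\to\infty$ gives $\theta\geq c$.

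For the weighted dissipation I would test by $(\theta-2c_2)_+\theta^{-1-\varepsilon}$, exactly as in \eqref{hot-test}. The main points are as follows.
\begin{itemize}
\item The source term $R\int\Sigma v^{-1}(\theta-2c_2)_+\theta^{-\varepsilon}dx$ is bounded by half of the viscous term $\int\mu\Sigma^2(\theta-2c_2)_+\theta^{-1-\varepsilon}/v\,dx$ plus $C_\varepsilon\|(\theta-2c_2)_+\|_{L^\infty}\int_0^1\theta\,dx$.
\item By the mean-point argument, $\|(\theta-2c_2)_+\|_{L^\infty}\leq C\mathcal V(t)$.
\item The conduction remainder $(1+\varepsilon)2c_2\tilde\kappa\int_{\theta>2c_2}\theta^{\beta-2-\varepsilon}\theta_x^2/v\,dx$ is at most $C_\varepsilon\mathcal V$.
\item The time-derivative term is an integral of a nonnegative function bounded by $C\theta$, so its supremum is controlled by $\sup_t\int_0^1\theta\,dx\leq C$.
\end{itemize}
Integrating in time gives $\int_0^T\int_0^1\bigl[\theta^{\alpha-\varepsilon}\Sigma^2+\theta^{\beta-1-\varepsilon}\theta_x^2\bigr]dxdt\leq C_\varepsilon$ on the region $\theta>4c_2$. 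On $\theta\leq 4c_2$ I use $\theta^{\alpha-\varepsilon}\leq C\theta^{2\alpha-1}$ and $\theta^{\beta-1-\varepsilon}\leq C\theta^{\beta-2}$, and the integral is controlled by $\int_0^T\mathcal V dt$.

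The step I expect to need the most care is confirming that the extra $v$-dependence of $\Sigma$ (the $+v$ term and $\mu$ in the denominator) causes no new trouble. In particular, no $\alpha\theta_t$ terms should arise from $\mu=\tilde\mu\theta^\alpha$ in these tests. That holds because only $\Sigma^2$ and $\Sigma$ appear, and $\theta^\alpha\in[1/2,2]$ by \eqref{normalized-smallness}. Lower-order terms involving $e^tP$ never appear, since the heat equation has none.
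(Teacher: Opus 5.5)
Your proposal is correct and follows the paper's proof essentially step for step. Like the paper, you apply the two multipliers of Lemma~\ref{lm24} to the normalized heat equation, absorb the linear source by Cauchy--Schwarz, use the mean-point oscillation bounds with $V$ replaced by $\mathcal V$ (relying on Lemmas~\ref{lm31}--\ref{lm32}), and finish with the low-temperature comparisons; your use of $\theta^{2\alpha-1}$ instead of the paper's $\theta^{\alpha-1}$ is harmless, since $\theta^\alpha\in[1/2,2]$ and the lower bound $\theta\geq c$ is already available.
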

\begin{proof}
Apply the multipliers in \eqref{inverse-test} and \eqref{hot-test} to
\eqref{normalized-w-system}$_3$. For the inverse-temperature test, the
linear source satisfies
\begin{align*}
&-R\int_0^1\frac1{v\theta}
 \left(w_x+v-\frac{R\theta}\mu\right)(\theta^{-1}-c_1^{-1})_+^pdx\\
&\quad\leq\frac12\int_0^1\frac\mu{v\theta^2}
 \left(w_x+v-\frac{R\theta}\mu\right)^2
              (\theta^{-1}-c_1^{-1})_+^pdx
             +C\int_0^1(\theta^{-1}-c_1^{-1})_+^pdx.
\end{align*}
The mean bound in Lemma~\ref{lm31} and the volume bound in
Lemma~\ref{lm32} give the same cold-temperature oscillation estimate
as in Lemma~\ref{lm24}, with $V$ replaced by $\mathcal V$.
Consequently,
\[
 \| (\theta^{-1}-c_1^{-1})_+(t)\|_{L^{p+1}}
 \leq\left[1+\int_0^1(\theta_0^{-1}-c_1^{-1})_+^{p+1}dx\right]^{1/(p+1)}
             e^{C\int_0^T\mathcal V(s)ds}\leq C.
\]
Letting $p\to\infty$ proves $\theta\geq c$. For the hot-temperature test,
\begin{align*}
&R\left|\int_0^1\frac1v\left(w_x+v-\frac{R\theta}\mu\right)
                  (\theta-2c_2)_+\theta^{-\varepsilon}dx\right|\\
&\quad\leq\frac12\int_0^1\frac\mu v
 \left(w_x+v-\frac{R\theta}\mu\right)^2
                 (\theta-2c_2)_+\theta^{-1-\varepsilon}dx
 +C_\varepsilon\| (\theta-2c_2)_+\|_{L^\infty}\int_0^1\theta dx,\\
&\int_0^T\| (\theta-2c_2)_+\|_{L^\infty}\int_0^1\theta dxdt
 \leq C\int_0^T\mathcal V(t)dt\leq C.
\end{align*}
Integrating the identity corresponding to \eqref{hot-test} and using
the low-temperature comparisons there proves \eqref{normalized-enhanced}.
\end{proof}
\begin{lemma}\label{lm34}
It holds that
\begin{equation}\label{normalized-vx}
\sup_{t\leq T}\|v_x(t)\|_{L^2}^2
 \leq\begin{cases}C,&\beta>0,\\C_\varepsilon R_T^\varepsilon,&\beta=0,
 \end{cases}\qquad 0<\varepsilon<\tfrac12.
\end{equation}
Here $R_T$ is defined by \eqref{RT} for the solution of
\eqref{normalized-system}.
\end{lemma}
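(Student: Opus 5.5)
We follow the proof of Lemma~\ref{lm25}, now using the normalized system \eqref{normalized-w-system}. The identity to exploit is the one announced in the introduction,
\begin{equation*}
 \left(\frac{v_x}{v}\right)_t+\frac{R\theta v_x}{\mu v^2}
 =\frac{w_t+w}{\mu}+\frac{R\theta_x}{\mu v}
       -\alpha\theta^{-1}\theta_x\left(1+\frac{w_x}{v}\right).
\end{equation*}
It comes from writing the momentum equation as $w_t+w=[\mu(w_x+v)/v-R\theta/v]_x$ and using $v_t=w_x$. Indeed,
\[
\left(\frac{\mu(w_x+v)}{v}\right)_x=\mu\left(\frac{v_x}{v}\right)_t+\mu_x\left(1+\frac{w_x}{v}\right),
\]
and one then divides by $\mu$.

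We multiply by $v_x/v$ and integrate over $[0,1]$. The $w_t$ term is moved into the time derivative: we write $\frac{w_t v_x}{\mu v}=\frac{d}{dt}\bigl(\frac{w v_x}{\mu v}\bigr)-\frac{w}{\mu}\bigl(\frac{v_x}{v}\bigr)_t+\alpha\frac{w v_x\theta_t}{\mu v\theta}$. We substitute the identity again for $(v_x/v)_t$ and integrate $\int \frac{w w_t}{\mu^2}$ by parts in time. This produces the functional
\[
\Phi(t)=\frac12\int_0^1\Bigl(\frac{v_x}{v}-\frac{w}{\mu}\Bigr)^2dx .
\]
It is coercive in $\|v_x\|_{L^2}^2$ modulo $C\|w\|_{L^2}^2$, exactly as in Lemma~\ref{lm25}.

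The extra damping term $w/\mu$ gives $\frac{w v_x}{\mu^2 v}-\frac{w^2}{\mu^3}$. The first part is absorbed into $\frac18\int\theta v_x^2/(\mu v^3)$ plus $C\int w^2$. The second has a favorable sign up to $\alpha$-corrections of the form $\alpha\,w^2\theta_t/\theta$.

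We bound $\|w\|_{L^\infty}$ by $\int_0^1|w_x|dx$, which is allowed since $\int w=0$. We then split $w_x=(w_x+v-R\theta/\mu)-(v-R\theta/\mu)$. The first part is controlled by $\mathcal V(t)^{1/2}$ times bounded means, using Lemmas~\ref{lm31}--\ref{lm32}. The second part is only bounded, not integrable in time.

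\textbf{Main obstacle.} Because of this, time integrability of $\|w\|_{L^2}^2$ does not follow from $\mathcal V$ alone. We would instead use the damping: the normalized equation has the good term $\int w^2$. Testing $w_t+w=\dots$ by $w$ gives
\[
\frac{d}{dt}\|w\|_{L^2}^2+2\|w\|_{L^2}^2\le C\int|w_x|\,\Bigl|\frac\mu v\Bigl(w_x+v-\frac{R\theta}{\mu}\Bigr)\Bigr|dx+\text{boundary terms}.
\]
The boundary terms involve $e^tP(e^t-1)$, which is integrable by \eqref{pressure-weights}. This should give $\int_0^T\|w\|_{L^2}^2dt\le C$ together with the $L^1$-in-time bound of $\|w_x+v-R\theta/\mu\|_{L^1}$ from the definition of $Y$. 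If that fails, we would fall back on the fact that the $R_T$-dependent route only needs the bound on $\sup_t\|v_x\|$, with $\int w^2$ handled by Gronwall using the $e^{-t}$ decay structure.

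The remaining terms are treated as in Lemma~\ref{lm25}. The term $R\theta_x v_x/(\mu v^2)$ and the cross terms with $w\theta_x$ are bounded by $\frac18\int\theta v_x^2+C\int\theta^{-1}\theta_x^2$. The $\alpha$-terms
\[
\alpha\int\frac1\theta\Bigl(\frac{v_x}{v}-\frac{w}{\mu}\Bigr)\Bigl(\frac{w\theta_t}{\mu}-\theta_x\Bigl(1+\frac{w_x}{v}\Bigr)\Bigr)
\]
are handled by Cauchy--Schwarz and the $Y(0,T;M)$ bounds with $\alpha M^{5/2}\le\eta_0$ from \eqref{normalized-smallness}, giving $\le C$.

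Integrating in time yields $\sup_t\|v_x\|_{L^2}^2\le C+C\int_0^T\int_0^1\theta^{-1}\theta_x^2$. Finally we use Lemma~\ref{lm33}. For $\beta>0$ we take $\varepsilon<\min\{\beta,1\}$ and get a constant bound. For $\beta=0$ we insert $\theta^{-1}\le R_T^\varepsilon\theta^{-1-\varepsilon}$, which gives $C_\varepsilon R_T^\varepsilon$.
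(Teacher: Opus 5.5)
\textbf{Verdict: genuine gap in the time-integration step.} Your identity, the functional $\Phi=\frac12\int_0^1(v_x/v-w/\mu)^2dx$, the treatment of the $\theta_x$- and $\alpha$-terms, and the final use of Lemma~\ref{lm33} all match the paper. The problem is the claim that integrating in time gives $\sup_t\|v_x\|_{L^2}^2\le C+C\int_0^T\int_0^1\theta^{-1}\theta_x^2$.

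After the absorptions, the right-hand side still contains $C\int_0^1(1+\theta)w^2dx$. This comes from the $w/\mu$ term and from the cross terms with $\theta w v_x$ and $w\theta_x$. At this stage it is only $O(1+\mathcal V(t))$ pointwise, so its integral over $[0,T]$ grows like $T$.

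Your remedy is to test $w_t+w=[\frac\mu v(w_x+v-\frac{R\theta}\mu)]_x$ by $w$ and use $\int_0^T\|w_x+v-R\theta/\mu\|_{L^1}dt\le M$ from $Y(0,T;M)$. This yields at best $\int_0^T\|w\|_{L^2}^2dt\le C(M)$. The reason is the cross term $\int_0^1\frac\mu v(v-\frac{R\theta}\mu)(w_x+v-\frac{R\theta}\mu)dx$. Estimated directly, it is $C(1+\mathcal V)$, which is not integrable uniformly in $T$. Estimated with the $Y$-bounds, it brings in $M$.

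A constant depending on $M$ with no factor $\alpha$ makes the closing choice of $M$ in \eqref{normalized-threshold} circular. In the paper, a uniform bound on $\int_0^T\|w\|_{L^2}^2dt$ appears only in Lemma~\ref{lm39}. It is derived there from $\int_0^T\|u_t\|_{L^2}^2dt\le C$, which comes from Lemmas~\ref{lm36}--\ref{lm37}, and those use the present lemma. The uniform $L^1$ bound on the residual appears only in Lemma~\ref{lm311}.

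\textbf{The missing idea} is to replace time integration by exponential damping. Since $\theta\ge c$ (Lemma~\ref{lm33}) and $v\le C$, the unabsorbed part of $\int_0^1R\theta v_x^2/(\mu v^3)dx$ dominates $c\|v_x\|_{L^2}^2\ge c\,\Phi-C\|w\|_{L^2}^2$. Moreover, $\|w\|_{L^2}\le C$ uniformly by \eqref{centered-w}, the bound on $\int_0^1u^2dx$ in Lemma~\ref{lm31}, and Lemma~\ref{lm32}. Hence
\[
\Phi'+c\,\Phi\le C(1+\mathcal V(t))+C\int_0^1\theta^{-1}\theta_x^2dx+C\alpha^2\int_0^1\frac{\mu v}{R\theta^3}\Bigl[\frac{w\theta_t}{\mu}-\Bigl(1+\frac{w_x}{v}\Bigr)\theta_x\Bigr]^2dx .
\]
Gronwall's inequality with the kernel $e^{-c(t-s)}$ turns the bounded but non-integrable source into an $O(1)$ contribution. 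This gives
\[
\Phi(t)\le Ce^{-ct}+C+C\int_0^T\int_0^1\theta^{-1}\theta_x^2dxdt+C\alpha^2M^2 .
\]
This damping is the term $\frac{R\theta}{\mu v}(\frac{v_x}{v}-\frac w\mu)$ in \eqref{normalized-effective-gradient}.

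Your fallback mentions ``Gronwall using the $e^{-t}$ decay structure'' but does not identify this mechanism. If it refers to the relaxation in $w_t+w=u_t$, it does not help, because $\int_0^T\|u_t\|_{L^2}^2dt$ is not yet controlled at this stage.
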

\begin{proof}
By \eqref{normalized-w-system},
\begin{align}
&\left(\frac{v_x}{v}\right)_t+\frac{R\theta v_x}{\mu v^2}
 =\frac{w_t+w}{\mu}+\frac{R\theta_x}{\mu v}
       -\frac\alpha\theta\left(1+\frac{w_x}{v}\right)\theta_x,
 \label{3.4-1}\\
&\left(\frac{v_x}{v}-\frac w\mu\right)_t
 +\frac{R\theta}{\mu v}\left(\frac{v_x}{v}-\frac w\mu\right)
 =\left(1-\frac{R\theta}{\mu v}\right)\frac w\mu
 +\frac{R\theta_x}{\mu v}
 +\frac\alpha\theta\left[\frac{w\theta_t}\mu
                    -\left(\frac{w_x}v+1\right)\theta_x\right].
\label{normalized-effective-gradient}
\end{align}
Multiplying \eqref{3.4-1} by $v_x/v$ and integrating over $[0,1]$,
we obtain
\begin{align}
&\frac12\frac d{dt}\int_0^1\frac{v_x^2}{v^2}dx
              +\int_0^1\frac{R\theta v_x^2}{\mu v^3}dx\notag\\
&=\frac d{dt}\int_0^1\left(\frac{wv_x}{\mu v}-\frac{w^2}{2\mu^2}\right)dx
 -\int_0^1\frac{w^2}{\mu^2}dx
 -R\int_0^1\frac{w\theta_x}{\mu^2v}dx
 +R\int_0^1\frac{\theta wv_x}{\mu^2v^2}dx\notag\\
&\quad+\int_0^1\frac{wv_x}{\mu v}dx
 +R\int_0^1\frac{v_x\theta_x}{\mu v^2}dx
 +\alpha\int_0^1\frac1\theta\left(\frac{v_x}{v}-\frac w\mu\right)
      \left[\frac{w\theta_t}\mu-\left(1+\frac{w_x}v\right)\theta_x\right]dx
 \notag\\
&=\frac d{dt}\int_0^1\left(\frac{wv_x}{\mu v}-\frac{w^2}{2\mu^2}\right)dx
                                        +\sum_{j=1}^6K_j.
\label{3.4-2}
\end{align}
It follows from Lemmas~\ref{lm31}--\ref{lm33} that
\begin{align*}
\sum_{j=1}^5|K_j|
&\leq\frac14\int_0^1\frac{R\theta v_x^2}{\mu v^3}dx
        +C\int_0^1(1+\theta)w^2dx
        +C\int_0^1\theta^{-1}\theta_x^2dx,\\
|K_6|&\leq\frac18\int_0^1\frac{R\theta v_x^2}{\mu v^3}dx
 +C\int_0^1\theta w^2dx
\\
&\quad+C\alpha^2\int_0^1\frac{\mu v}{R\theta^3}
       \left[\frac{w\theta_t}\mu-\left(1+\frac{w_x}v\right)\theta_x\right]^2dx,\\
&\alpha^2\int_0^T\int_0^1\frac{\mu v}{R\theta^3}
       \left[\frac{w\theta_t}\mu-\left(1+\frac{w_x}v\right)\theta_x\right]^2dxdt\\
&\quad\leq C\alpha^2\sup_t\|w\|_{L^\infty}^2\int_0^T\|\theta_t\|_{L^2}^2dt
 +C\alpha^2\sup_t\|w_x\|_{L^2}^2\int_0^T\|\theta_x\|_{L^\infty}^2dt\\
&\qquad+C\alpha^2\int_0^T\|\theta_x\|_{L^2}^2dt
 \leq C\alpha^2M^2,\\
&\|\theta\|_{L^\infty}\leq2c_2+C\mathcal V(t),\qquad
 \int_0^1(1+\theta)w^2dx\leq C(1+\mathcal V(t)).
\end{align*}
Let
\begin{equation*}
\tilde\Phi(t)\triangleq\int_0^1
 \left(\frac{v_x^2}{2v^2}+\frac{w^2}{2\mu^2}-\frac{wv_x}{\mu v}\right)dx
 =\frac12\int_0^1\left(\frac{v_x}{v}-\frac w\mu\right)^2dx.
\end{equation*}
Consequently,
\begin{align*}
&\tilde\Phi'(t)
 +c\int_0^1\left(\frac{v_x}{v}-\frac w\mu\right)^2dx\\
&\quad\leq C(1+\mathcal V(t))+C\int_0^1\theta^{-1}\theta_x^2dx
 +C\alpha^2\int_0^1\frac{\mu v}{R\theta^3}
      \left[\frac{w\theta_t}\mu-\left(1+\frac{w_x}v\right)\theta_x\right]^2dx,\\
&\left\|\frac{v_x(t)}{v(t)}-\frac{w(t)}{\mu(t)}\right\|_{L^2}^2
 \leq Ce^{-ct}+C\int_0^te^{-c(t-s)}
       \left(1+\mathcal V(s)+\int_0^1\theta^{-1}\theta_x^2dx\right)ds
                                     +C\alpha^2M^2.
\end{align*}
The argument following \eqref{effective-gradient-identity} and
$v_x=v(v_x/v-w/\mu)+vw/\mu$ now give \eqref{normalized-vx}.
\end{proof}

\begin{lemma}\label{lm36}
The following estimates hold:
\begin{align}
&\int_0^T\left\|u_x-\frac{R\theta}\mu\right\|_{L^2}^2dt\leq C,
 \label{normalized-first-square}\\
&\sup_{t\leq T}\left[\left\|u_x-\frac{R\theta}\mu\right\|_{L^2}^2
                       +\int_0^1\theta^{\beta+3}dx\right]\notag\\
 &\quad+\int_0^T\left[\left\|\left(u_x-\frac{R\theta}\mu\right)_x\right\|_{L^2}^2
               +\|u_t\|_{L^2}^2+\int_0^1\theta^{2\beta+1}\theta_x^2dx\right]dt
 \leq C_\varepsilon R_T ^{2\varepsilon}.\label{normalized-stress-output}
\end{align}
\end{lemma}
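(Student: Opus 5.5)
The plan is to mirror the proofs of Lemmas~\ref{lm26} and \ref{lm27}, working throughout with the effective quantity
\[
G\triangleq u_x-\frac{R\theta}{\mu}=w_x+v-\frac{R\theta}{\mu}.
\]
The second equality uses $u_x=w_x+v$, which follows from \eqref{centered-w}. The dissipation in Lemma~\ref{lm31} controls $\int\mu G^2/(v\theta)\,dx$, and the momentum equation reads $u_t=(\mu G/v)_x$ with boundary value $\mu G/v=-e^tP(e^t-1)$. For \eqref{normalized-first-square}, split the integral into $\{\theta\le 4c_2\}$ and $\{\theta>4c_2\}$. On the first set, $G^2\le C\mu G^2/(v\theta)$, so that part is bounded by $C\int_0^T\mathcal V\,dt$. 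On the second set, test \eqref{normalized-w-system}$_3$ by $(1-2c_2/\theta)_+$, exactly as in Lemma~\ref{lm26}. The linear term $R\theta G/v$ times the multiplier is bounded by $\eta\|G\|_{L^2}^2+C_\eta\|(\theta-2c_2)_+\|_{L^\infty}\int\theta\,dx$. The last factor is integrable in time because $\|(\theta-2c_2)_+\|_{L^\infty}\le C\mathcal V$, by the oscillation estimate \eqref{normalized-oscillation} and the volume bound of Lemma~\ref{lm32}. The time-derivative term is bounded by the energy bound \eqref{normalized-mean-energy}, and the heat-flux terms are controlled by \eqref{normalized-enhanced}. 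Choosing $\eta$ small closes \eqref{normalized-first-square}.

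For \eqref{normalized-stress-output}, proceed as in \eqref{physical-stress-identity}. Write $F=\mu G/v$ and test $u_t=F_x$ against $-\big(G+e^tP(e^t-1)v/\mu\big)_x$. This combination vanishes at $x=0,1$, so the boundary terms drop out. The result is an identity for
\[
\frac d{dt}\int_0^1\Big[\tfrac12 G^2+\frac{e^tP(e^t-1)v}{\mu}G\Big]dx+\int_0^1\frac{\mu}{v}G_x^2\,dx .
\]
Computing $G_t=u_{xt}-(R\theta/\mu)_t$ brings in three kinds of terms. The first is $\theta_t$, which is replaced through the heat equation as in Lemma~\ref{lm27}; the resulting $u_{xx}\theta_x$-type term is absorbed together with the $\theta^{\beta+3}$ moment identity. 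That identity is obtained by testing the heat equation with $(\theta^{\beta+2}-c_2^{\beta+2})_+$, as in \eqref{high-moment-identity}, with $u_x^2$ replaced by $G$-quadratic terms. The second is $v_t=w_x$, which is bounded using $\|w_x\|\le\|G\|+C+C\|\theta\|$. The third is the $\alpha$-terms, which are bounded with the space $Y(0,T;M)$ and the smallness condition \eqref{normalized-smallness}, giving a contribution of order $\alpha M^{\beta+2}\le1$.

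The pressure terms carry the weights $e^tP$ and $(e^tP)'$, which are square- and $L^1$-integrable by \eqref{pressure-weights}. The terms involving $v_x$ are handled with \eqref{normalized-vx}; this is the source of the factor $C_\varepsilon R_T^{\varepsilon}$ when $\beta=0$. The terms with $\theta v_x$ are handled with $\theta\le R_T$ together with \eqref{normalized-vx} and $\int\theta^{-1}\theta_x^2$. Finally, Gronwall's inequality is applied with integrating factor $\exp\{C\int_0^T(|(e^tP)'|+\|G\|_{L^2}^2)\,dt\}$, which is bounded by \eqref{normalized-first-square}. The bound on $\|u_t\|_{L^2}$ then follows from $u_t=(\mu G/v)_x$, estimated by $\|G_x\|+\|G\|_{L^\infty}(\|v_x\|+\alpha\|\theta_x\|)$.

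I expect the main obstacle to be the power of $R_T$. The claimed bound is $R_T^{2\varepsilon}$, not $R_T^{1+\varepsilon}$ as in Lemma~\ref{lm27}, so terms such as $\|\theta v_x\|_{L^2}\|G_x\|_{L^2}$ cannot simply be bounded by $R_T\|\theta^{1/2}v_x\|^2$. My first attempt is to exploit the damping term $v$ in $v_t+v=u_x$, together with the decay of $\|\theta\|_{L^\infty}-2c_2\lesssim\mathcal V$. This lets the $\theta$-weight in $\theta v_x$ be replaced by $2c_2+C\mathcal V(t)$, giving $\|\theta v_x\|^2\le C(1+\mathcal V^2)\|v_x\|^2\le C_\varepsilon R_T^{\varepsilon}(1+\mathcal V\,R_T)$. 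The remaining factor $\mathcal V R_T$ is then put into the Gronwall weight only after interpolating $R_T$ against the moment $\sup_t\int\theta^{\beta+3}\,dx$ on the left-hand side. If this fails, I would accept a weaker power of $R_T$, provided it stays below the $R_T^{\beta+3/2}$ threshold used in Lemma~\ref{lm28}.
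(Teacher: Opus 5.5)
Your architecture matches the paper's. You use the same $(1-2c_2/\theta)_+$ test for \eqref{normalized-first-square}, the same test of momentum against $-Z_x$ with $Z=u_x-\frac{R\theta}{\mu}+\frac{e^tP(e^t-1)v}{\mu}$, the same coupling with the $(\theta^{\beta+2}-c_2^{\beta+2})_+$ moment identity, and Gronwall with an integrable weight.

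The gap is the decisive step: you never show why the loss is only $R_T^{2\varepsilon}$, and the mechanism you propose would fail. The terms $\|\theta v_x\|_{L^2}\|G_x\|_{L^2}$ you worry about appear only if you split the flux as in Lemma~\ref{lm27} into $(\mu u_x/v)_x-(R\theta/v)_x$. Even if they did appear, your remedy could not handle them here. In the normalized setting Lemma~\ref{lm34} gives only $\sup_t\|v_x\|_{L^2}^2$, with no time integral (unlike \eqref{volume-gradient}). A forcing term of size $\|\theta\|_{L^\infty}^2\|v_x\|_{L^2}^2\lesssim R_T^{\varepsilon}(1+R_T\mathcal V)$ therefore integrates to about $C_\varepsilon R_T^{\varepsilon}(T+R_T)$. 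That is neither uniform in $T$ nor of the right power. Also, $R_T$ is a pointwise supremum and cannot be interpolated against $\sup_t\int_0^1\theta^{\beta+3}dx$ without gradient information. Your ``weaker power'' fallback hits the same time-integrability obstruction, and it would not prove the lemma as stated anyway.

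The missing idea is to keep the flux in the form $u_t=\left(\frac{\mu}{v}Z\right)_x$ and never expand $(R\theta/v)_x$. Then $-\int_0^1u_tZ_x\,dx$ produces the dissipation $\int_0^1\frac{\mu}{v}Z_x^2dx$ plus the single commutator $J_3=-\int_0^1(\mu/v)_xZZ_x\,dx$. This is the only place $v_x$ enters, and it carries no $\theta$-weight. Since $Z$ vanishes at $x=0,1$, you have $\|Z\|_{L^\infty}^2\le2\|Z\|_{L^2}\|Z_x\|_{L^2}$, hence
\[
|J_3|\le C\left(\|v_x\|_{L^2}+\alpha\|\theta_x\|_{L^2}\right)\|Z\|_{L^2}^{1/2}\|Z_x\|_{L^2}^{3/2}
\le\eta\|Z_x\|_{L^2}^2+C_\eta\left(\|v_x\|_{L^2}^4+\alpha^4M^2\right)\|Z\|_{L^2}^2 .
\]
By \eqref{normalized-vx}, $\|v_x\|_{L^2}^4\le C_\varepsilon R_T^{2\varepsilon}$. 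By \eqref{normalized-first-square} and \eqref{pressure-weights}, $\int_0^T\|Z\|_{L^2}^2dt\le C$. This is exactly where $R_T^{2\varepsilon}$ comes from: every $v_x$ is paired with a time-integrable factor.

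The other possible $R_T$-loss is also absent in this formulation. Substituting the heat equation into $-\frac{R(1-\alpha)}{\tilde\mu}\int_0^1Z\theta^{-\alpha}\theta_t\,dx$ produces $-\frac{R^2(1-\alpha)}{\tilde\mu c_v}\int_0^1\frac{\theta^{1-\alpha}}{v}\left(u_x-\frac{R\theta}{\mu}\right)^2dx$ with a favorable sign. In Lemma~\ref{lm27} the analogous term $\int\theta^{1-\alpha}u_x^2/v$ had the wrong sign and cost a factor $R_T$. The pressure cross term $e^tP(e^t-1)\int_0^1\theta^{1-2\alpha}\left(u_x-\frac{R\theta}{\mu}\right)dx$ is absorbed into this good term.
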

\begin{proof}
The test $(1-2c_2/\theta)_+$ in Lemma~\ref{lm26} applies to
\eqref{normalized-w-system}$_3$. The low-temperature part is controlled by
$\mathcal V$, while
\begin{align*}
&\int_0^T\int_0^1\frac\mu v
 \left(u_x-\frac{R\theta}\mu\right)^2(1-2c_2/\theta)_+dxdt\\
&\quad\leq C+\eta\int_0^T\left\|u_x-\frac{R\theta}\mu\right\|_{L^2}^2dt
 +C_\eta\int_0^T\| (\theta-2c_2)_+\|_{L^\infty}\int_0^1\theta dxdt\\
&\quad\leq C_\eta+\eta\int_0^T
                           \left\|u_x-\frac{R\theta}\mu\right\|_{L^2}^2dt.
\end{align*}
Choosing $\eta$ small proves \eqref{normalized-first-square}.
The boundary condition gives
\begin{equation}\label{corrected-residual-boundary}
\left.\left(u_x-\frac{R\theta}\mu+
                \frac{e^tP(e^t-1)v}\mu\right)\right|_{x=0,1}=0.
\end{equation}
Testing momentum with the negative spatial derivative of this expression gives
\begin{align}
&\frac12\frac d{dt}\left\|u_x-\frac{R\theta}\mu+
                      \frac{e^tP(e^t-1)v}\mu\right\|_{L^2}^2
 +\int_0^1\frac\mu v
       \left|\left(u_x-\frac{R\theta}\mu+
                      \frac{e^tP(e^t-1)v}\mu\right)_x\right|^2dx\notag\\
&=-\frac{R(1-\alpha)}{\tilde\mu}\int_0^1
       \left(u_x-\frac{R\theta}\mu+\frac{e^tP(e^t-1)v}\mu\right)
                         \theta^{-\alpha}\theta_tdx\notag\\
&\quad+\int_0^1\left(u_x-\frac{R\theta}\mu+
                    \frac{e^tP(e^t-1)v}\mu\right)
                    \left(\frac{e^tP(e^t-1)v}\mu\right)_tdx\notag\\
&\quad-\int_0^1\left(\frac\mu v\right)_x
       \left(u_x-\frac{R\theta}\mu+\frac{e^tP(e^t-1)v}\mu\right)
       \left(u_x-\frac{R\theta}\mu+\frac{e^tP(e^t-1)v}\mu\right)_xdx
 =\sum_{j=1}^3J_j.\label{normalized-stress-identity}
\end{align}
Substituting the heat equation into $J_1$ gives
\begin{align}
J_1={}&-\frac{R(1-\alpha)}{c_v}\int_0^1\frac1v
       \left(u_x-\frac{R\theta}\mu+\frac{e^tP(e^t-1)v}\mu\right)
                               \left(u_x-\frac{R\theta}\mu\right)^2dx
 \notag\\
&-\frac{R^2(1-\alpha)}{\tilde\mu c_v}\int_0^1
                 \frac{\theta^{1-\alpha}}v
                        \left(u_x-\frac{R\theta}\mu\right)^2dx\notag\\
 &-\frac{R^2(1-\alpha)e^tP(e^t-1)}{\tilde\mu^2c_v}
       \int_0^1\theta^{1-2\alpha}\left(u_x-\frac{R\theta}\mu\right)dx
 \notag\\
&+\frac{R\tilde\kappa(1-\alpha)}{\tilde\mu c_v}
   \int_0^1\frac{\theta^{\beta-\alpha}\theta_x}v
     \left(u_x-\frac{R\theta}\mu+\frac{e^tP(e^t-1)v}\mu\right)_xdx
 \notag\\
&-\frac{R\tilde\kappa\alpha(1-\alpha)}{\tilde\mu c_v}
   \int_0^1\frac{\theta^{\beta-\alpha-1}\theta_x^2}v
      \left(u_x-\frac{R\theta}\mu+\frac{e^tP(e^t-1)v}\mu\right)dx.
\label{normalized-stress-heat}
\end{align}
The last three terms satisfy, respectively,
\begin{align*}
&C|e^tP(e^t-1)|\left|\int_0^1\theta^{1-2\alpha}
                       \left(u_x-\frac{R\theta}\mu\right)dx\right|\\
&\quad\leq\frac{R^2(1-\alpha)}{2\tilde\mu c_v}
      \int_0^1\frac{\theta^{1-\alpha}}v
                     \left(u_x-\frac{R\theta}\mu\right)^2dx
                        +Ce^{2t}|P(e^t-1)|^2,\\
&C\left|\int_0^1\frac{\theta^{\beta-\alpha}\theta_x}v
       \left(u_x-\frac{R\theta}\mu+\frac{e^tP(e^t-1)v}\mu\right)_xdx\right|\\
&\quad\leq\eta\left\|\left(u_x-\frac{R\theta}\mu+
                  \frac{e^tP(e^t-1)v}\mu\right)_x\right\|_{L^2}^2
                         +C_\eta\|\theta^\beta\theta_x\|_{L^2}^2,\\
&C\alpha\left|\int_0^1\frac{\theta^{\beta-\alpha-1}\theta_x^2}v
            \left(u_x-\frac{R\theta}\mu+\frac{e^tP(e^t-1)v}\mu\right)dx\right|\\
&\quad\leq C\alpha M^\beta\|\theta_x\|_{L^2}^2
       \left\|\left(u_x-\frac{R\theta}\mu+\frac{e^tP(e^t-1)v}\mu\right)_x\right\|_{L^2}\\
&\quad\leq\eta\left\|\left(u_x-\frac{R\theta}\mu+
            \frac{e^tP(e^t-1)v}\mu\right)_x\right\|_{L^2}^2
                   +C_\eta\alpha^2M^{2\beta}\|\theta_x\|_{L^2}^4.
\end{align*}
Moreover,
\begin{align*}
&\alpha^2M^{2\beta}\int_0^T\|\theta_x\|_{L^2}^4dt
 \leq\alpha^2M^{2\beta}\sup_{t\leq T}\|\theta_x\|_{L^2}^2
                         \int_0^T\|\theta_x\|_{L^2}^2dt
 \leq\alpha^2M^{2\beta+2}\leq C,\\
&\left|\int_0^1\frac1v
 \left(u_x-\frac{R\theta}\mu+\frac{e^tP(e^t-1)v}\mu\right)
 \left(u_x-\frac{R\theta}\mu\right)^2dx\right|\\
&\quad\leq C\int_0^1
 \left|u_x-\frac{R\theta}\mu+\frac{e^tP(e^t-1)v}\mu\right|^3dx\\
 &\qquad+Ce^{2t}|P(e^t-1)|^2
       \left\|u_x-\frac{R\theta}\mu+\frac{e^tP(e^t-1)v}\mu\right\|_{L^1}\\
&\quad\leq C
 \left\|u_x-\frac{R\theta}\mu+\frac{e^tP(e^t-1)v}\mu\right\|_{L^2}^{5/2}
 \left\|\left(u_x-\frac{R\theta}\mu+
                      \frac{e^tP(e^t-1)v}\mu\right)_x\right\|_{L^2}^{1/2}\\
&\qquad+C\left\|u_x-\frac{R\theta}\mu+
                      \frac{e^tP(e^t-1)v}\mu\right\|_{L^2}^2
                 +Ce^{2t}|P(e^t-1)|^2\\
&\quad\leq\eta\left\|\left(u_x-\frac{R\theta}\mu+
              \frac{e^tP(e^t-1)v}\mu\right)_x\right\|_{L^2}^2\\
 &\qquad+C_\eta\left\|u_x-\frac{R\theta}\mu+
             \frac{e^tP(e^t-1)v}\mu\right\|_{L^2}^2\\
&\quad+C_\eta\left\|u_x-\frac{R\theta}\mu+
             \frac{e^tP(e^t-1)v}\mu\right\|_{L^2}^4
                       +C_\eta e^{2t}|P(e^t-1)|^2.
\end{align*}
For $J_2$ use the exact derivative
\begin{align*}
\left(\frac{e^tP(e^t-1)v}\mu\right)_t
={}&\frac v\mu\frac d{dt}(e^tP(e^t-1))
 +\frac{e^tP(e^t-1)}\mu
             \left(u_x-\frac{R\theta}\mu+\frac{R\theta}\mu-v\right)\\
 &-\frac{\alpha e^tP(e^t-1)v}{\mu\theta}\theta_t.
\end{align*}
Since $\int\theta^{1-2\alpha}dx\leq4c_2$,
\begin{align*}
|J_2|\leq{}&C\left|\frac d{dt}(e^tP(e^t-1))\right|
 \left\|u_x-\frac{R\theta}\mu+\frac{e^tP(e^t-1)v}\mu\right\|_{L^2}\\
&+C|e^tP(e^t-1)|
 \left\|u_x-\frac{R\theta}\mu+\frac{e^tP(e^t-1)v}\mu\right\|_{L^2}
                         \left\|u_x-\frac{R\theta}\mu\right\|_{L^2}\\
&+C|e^tP(e^t-1)|
 \left\|u_x-\frac{R\theta}\mu+\frac{e^tP(e^t-1)v}\mu\right\|_{L^\infty}
                      \int_0^1(1+\theta^{1-2\alpha})dx\\
&+C\alpha|e^tP(e^t-1)|
 \left\|u_x-\frac{R\theta}\mu+\frac{e^tP(e^t-1)v}\mu\right\|_{L^2}
                                      \|\theta_t\|_{L^2}\\
\leq{}&\eta\left\|\left(u_x-\frac{R\theta}\mu+
                 \frac{e^tP(e^t-1)v}\mu\right)_x\right\|_{L^2}^2
 +C_\eta\left\|u_x-\frac{R\theta}\mu+
                 \frac{e^tP(e^t-1)v}\mu\right\|_{L^2}^2\\
&+C_\eta\left[e^{2t}|P(e^t-1)|^2+
                  \left|\frac d{dt}(e^tP(e^t-1))\right|^2\right]\\
 &+C_\eta\alpha^2e^{2t}|P(e^t-1)|^2\|\theta_t\|_{L^2}^2,\\
|J_3|\leq{}&C(\|v_x\|_{L^2}+\alpha\|\theta_x\|_{L^2})
       \left\|u_x-\frac{R\theta}\mu+\frac{e^tP(e^t-1)v}\mu\right\|_{L^2}^{1/2}\\
 &\qquad\times\left\|\left(u_x-\frac{R\theta}\mu+
                    \frac{e^tP(e^t-1)v}\mu\right)_x\right\|_{L^2}^{3/2}\\
\leq{}&\eta\left\|\left(u_x-\frac{R\theta}\mu+
                   \frac{e^tP(e^t-1)v}\mu\right)_x\right\|_{L^2}^2\\
 &+C_{\eta,\varepsilon} R_T ^{2\varepsilon}
       \left\|u_x-\frac{R\theta}\mu+\frac{e^tP(e^t-1)v}\mu\right\|_{L^2}^2.
\end{align*}
By \eqref{normalized-first-square} and \eqref{pressure-weights},
\begin{align*}
& R_T ^{2\varepsilon}\int_0^T
 \left\|u_x-\frac{R\theta}\mu+\frac{e^tP(e^t-1)v}\mu\right\|_{L^2}^2dt\\
&\quad\leq C R_T ^{2\varepsilon}
 \int_0^T\left[\left\|u_x-\frac{R\theta}\mu\right\|_{L^2}^2
                           +e^{2t}|P(e^t-1)|^2\right]dt
 \leq C R_T ^{2\varepsilon}.
\end{align*}

The high-temperature multiplier in \eqref{high-moment-identity} gives
\begin{align}
&c_v\frac d{dt}\int_{\theta>c_2}\left[
 \frac{\theta^{\beta+3}}{\beta+3}-c_2^{\beta+2}\theta
                  +\frac{\beta+2}{\beta+3}c_2^{\beta+3}\right]dx
               +c\int_0^1\theta^{2\beta+1}\theta_x^2dx\notag\\
&\quad\leq\eta\| (\mu u_x-R\theta)_x\|_{L^2}^2
 +C_\eta\left\|u_x-\frac{R\theta}\mu\right\|_{L^2}^2
 +C\left\|u_x-\frac{R\theta}\mu\right\|_{L^2}^4
 +C_\varepsilon\int_0^1\theta^{\beta-1-\varepsilon}\theta_x^2dx.
\label{normalized-high-moment}
\end{align}
Indeed, $\int\theta^{1-\alpha}dx\leq2c_2$ and
\begin{align*}
&\left|\int_0^1\frac1v
 \left[\mu\left(u_x-\frac{R\theta}\mu\right)^2
             +R\theta\left(u_x-\frac{R\theta}\mu\right)\right]
                        (\theta^{\beta+2}-c_2^{\beta+2})_+dx\right|\\
&\quad\leq C\left[\left\|u_x-\frac{R\theta}\mu\right\|_{L^2}^2
                     +\|\mu u_x-R\theta\|_{L^\infty}\right]
              \left(\int_{\theta>c_2}\theta^{2\beta+1}\theta_x^2dx\right)^{1/2}\\
&\quad\leq\frac{(\beta+2)\tilde\kappa}{2}
             \int_{\theta>c_2}\frac{\theta^{2\beta+1}\theta_x^2}{v}dx
 +C\left\|u_x-\frac{R\theta}\mu\right\|_{L^2}^4
 +C\|\mu u_x-R\theta\|_{L^\infty}^2,\\
&\|\mu u_x-R\theta\|_{L^\infty}^2
 \leq\|\mu u_x-R\theta\|_{L^2}^2
       +2\|\mu u_x-R\theta\|_{L^2}\|(\mu u_x-R\theta)_x\|_{L^2}\\
&\quad\leq\eta\|(\mu u_x-R\theta)_x\|_{L^2}^2
                         +C_\eta\left\|u_x-\frac{R\theta}\mu\right\|_{L^2}^2.
\end{align*}
Furthermore,
\begin{align*}
&(\mu u_x-R\theta)_x\\
&\quad=\mu\left(u_x-\frac{R\theta}\mu+
                 \frac{e^tP(e^t-1)v}\mu\right)_x\\
 &\qquad+\frac{\alpha\mu\theta_x}\theta
       \left(u_x-\frac{R\theta}\mu+\frac{e^tP(e^t-1)v}\mu\right)
 -e^tP(e^t-1)v_x,\\
&\alpha\|\theta_x\|_{L^2}
 \left\|u_x-\frac{R\theta}\mu+\frac{e^tP(e^t-1)v}\mu\right\|_{L^\infty}\\
&\quad\leq C\alpha\sqrt M
 \left\|\left(u_x-\frac{R\theta}\mu+
                   \frac{e^tP(e^t-1)v}\mu\right)_x\right\|_{L^2},\\
&\|(\mu u_x-R\theta)_x\|_{L^2}^2\\
&\quad\leq C\left\|\left(u_x-\frac{R\theta}\mu+
                 \frac{e^tP(e^t-1)v}\mu\right)_x\right\|_{L^2}^2
                           +Ce^{2t}|P(e^t-1)|^2\|v_x\|_{L^2}^2,\\
&\theta^{2\beta}\leq\zeta\theta^{2\beta+1}
                      +C_{\zeta,\varepsilon}\theta^{\beta-1-\varepsilon}.
\end{align*}
Choose $\eta$ first and then $\zeta$ to add
\eqref{normalized-high-moment} to \eqref{normalized-stress-identity}.
Using \eqref{normalized-first-square} and \eqref{pressure-weights}, we obtain
\begin{align}
&\left\|u_x-\frac{R\theta}\mu+\frac{e^tP(e^t-1)v}\mu\right\|_{L^2}^2
 +\int_0^1\theta^{\beta+3}(x,t)dx\notag\\
&\quad+c\int_0^t\left[
 \left\|\left(u_x-\frac{R\theta}\mu+
                 \frac{e^sP(e^s-1)v}\mu\right)_x\right\|_{L^2}^2
               +\int_0^1\theta^{2\beta+1}\theta_x^2dx\right]ds\notag\\
&\leq C_\varepsilon R_T ^{2\varepsilon}
 +C\int_0^t\left[
 \left\|u_x-\frac{R\theta}\mu+\frac{e^sP(e^s-1)v}\mu\right\|_{L^2}^2
                          +e^{2s}|P(e^s-1)|^2\right]\notag\\
&\qquad\qquad\times\left[
 \left\|u_x-\frac{R\theta}\mu+\frac{e^sP(e^s-1)v}\mu\right\|_{L^2}^2
                   +\int_0^1\theta^{\beta+3}dx\right]ds,
\label{normalized-stress-combination}\\
&\int_0^T\left[
 \left\|u_x-\frac{R\theta}\mu+\frac{e^tP(e^t-1)v}\mu\right\|_{L^2}^2
                            +e^{2t}|P(e^t-1)|^2\right]dt\leq C.
\notag
\end{align}
Gronwall's inequality yields
\begin{align*}
&\sup_{t\leq T}\left[
 \left\|u_x-\frac{R\theta}\mu+\frac{e^tP(e^t-1)v}\mu\right\|_{L^2}^2
                       +\int_0^1\theta^{\beta+3}dx\right]\\
&\quad+\int_0^T\left[
 \left\|\left(u_x-\frac{R\theta}\mu+
                \frac{e^tP(e^t-1)v}\mu\right)_x\right\|_{L^2}^2
                  +\int_0^1\theta^{2\beta+1}\theta_x^2dx\right]dt\\
&\leq C_\varepsilon R_T ^{2\varepsilon}
 \exp\left\{C\int_0^T\left[
 \left\|u_x-\frac{R\theta}\mu+\frac{e^tP(e^t-1)v}\mu\right\|_{L^2}^2
                  +e^{2t}|P(e^t-1)|^2\right]dt\right\}\\
&\leq C_\varepsilon R_T ^{2\varepsilon}.
\end{align*}
The remaining derivatives follow from
\begin{align*}
\left(\frac{e^tP(e^t-1)v}\mu\right)_x
 &=\frac{e^tP(e^t-1)}\mu v_x-
          \frac{\alpha e^tP(e^t-1)v}{\mu\theta}\theta_x,\\
u_t&=\left[\frac\mu v\left(u_x-\frac{R\theta}\mu+
                                \frac{e^tP(e^t-1)v}\mu\right)\right]_x.
\end{align*}
\end{proof}

\begin{lemma}\label{lm37}
There exist fixed $c,C>0$ such that
\begin{align}
&c\leq v,\theta\leq C,\quad
 \sup_{t\leq T}\left(\|v_x\|_{L^2}^2+\|\theta_x\|_{L^2}^2+
                       \left\|u_x-\frac{R\theta}\mu\right\|_{L^2}^2\right)\leq C,
 \notag\\
&\int_0^T\left(\left\|u_x-\frac{R\theta}\mu\right\|_{H^1}^2
        +\|u_t\|_{L^2}^2+\|\theta_x\|_{H^1}^2+\|\theta_t\|_{L^2}^2\right)dt\leq C.
\label{normalized-base}
\end{align}
\end{lemma}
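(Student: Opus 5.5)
Lemma~\ref{lm37} is the normalized analogue of Lemma~\ref{lm28} and Corollary~\ref{co29}, so I would follow that route: first show $R_T\leq C$, then read off all the listed bounds. By Lemmas~\ref{lm32} and \ref{lm33}, $v$ is bounded above and below and $\theta\geq c$. It therefore remains to bound $\sup\theta$, which enters only through $R_T$ in \eqref{normalized-vx} and \eqref{normalized-stress-output}.

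\emph{Step 1 (energy estimate for $\theta^\beta\theta_x$).} Multiply \eqref{normalized-w-system}$_3$ by $\theta^\beta\theta_t$, as in \eqref{thermal-gradient-identity}. The source is now $\frac1v[\mu(u_x-R\theta/\mu)^2+R\theta(u_x-R\theta/\mu)]$; after Young's inequality its contribution is bounded by
\[
\int_0^1\theta^\beta(u_x-R\theta/\mu)^4dx+\int_0^1\theta^{\beta+2}(u_x-R\theta/\mu)^2dx .
\]
The convection term is bounded by $\int_0^1|u_x|\theta^{2\beta}\theta_x^2dx$. To estimate these, write $u_x=(u_x-R\theta/\mu)+R\theta/\mu$ and use the one-dimensional interpolation $\|f\|_{L^\infty}^2\leq\|f\|_{L^2}^2+2\|f\|_{L^2}\|f_x\|_{L^2}$ with $f=u_x-R\theta/\mu$. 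Then \eqref{normalized-stress-output} gives
\[
\int_0^T\|u_x-R\theta/\mu\|_{H^1}^2dt\leq C_\varepsilon R_T^{2\varepsilon},
\]
and \eqref{normalized-first-square} gives the $L^2_tL^2_x$ bound. For the $\theta^\beta\theta_x$ factor I plan to use $\int_0^T\!\int_0^1\theta^{2\beta}\theta_x^2\leq C_\varepsilon R_T^{\beta+1+\varepsilon}$, obtained from \eqref{normalized-enhanced}. Gronwall's inequality, with the weight $R_T^{-1-\varepsilon}\|u_x\|_{H^1}^2$ as in Lemma~\ref{lm28}, should then give
\[
\sup_t\|\theta^\beta\theta_x\|_{L^2}^2\leq C_\varepsilon R_T^{\beta+2+C\varepsilon}.
\]
A term $\frac{R\theta}{\mu}\theta^{2\beta}\theta_x^2$ appears here that was absent before. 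It is bounded by $R_T\int_0^1\theta^{2\beta}\theta_x^2$, whose time integral is at most $C_\varepsilon R_T^{\beta+2+\varepsilon}$, so it fits within the same power.

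\emph{Step 2 (closing the bound on $\theta$).} At a point where $\theta$ equals its mean (Lemma~\ref{lm31}),
\[
\theta^{\beta+3/2}\leq C+C\|\theta^{\beta+1/2}\|_{L^2}\|\theta^\beta\theta_x\|_{L^2}.
\]
This gives $R_T^{\beta+3/2}\leq C_\varepsilon R_T^{\beta/2+1+C\varepsilon}$, and choosing $\varepsilon$ small yields $R_T\leq C$. Once $R_T$ is bounded, Lemma~\ref{lm34} and \eqref{normalized-stress-output} give the uniform bounds on $\|v_x\|_{L^2}$ and $\|u_x-R\theta/\mu\|_{L^2}$, and, since $\theta\leq C$, Step~1 gives $\sup_t\|\theta_x\|_{L^2}\leq C$ and $\int_0^T\|\theta_t\|_{L^2}^2dt\leq C$. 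The bound on $\int_0^T\|\theta_x\|_{L^2}^2dt$ follows from \eqref{normalized-enhanced}. Finally, $\int_0^T\|\theta_{xx}\|_{L^2}^2dt$ comes from the elliptic form \eqref{thermal-elliptic}, rewritten with $u_x$ replaced by $u_x-R\theta/\mu$ plus $R\theta/\mu$; the terms $v_x\theta_x$ and $\theta_x^2$ are absorbed by interpolation exactly as before.

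\emph{Main obstacle.} The delicate step is the quartic term $\int_0^T\!\int_0^1\theta^\beta(u_x-R\theta/\mu)^4\,dx\,dt$ in Step~1. It needs $\sup_t\|u_x-R\theta/\mu\|_{L^2}^2\cdot\int_0^T\|u_x-R\theta/\mu\|_{H^1}^2dt\leq C_\varepsilon R_T^{4\varepsilon}$, and the resulting power of $R_T$ must stay below $\beta+3/2$ with room for the extra factor $R_T^\beta$. Here the powers are $R_T^{2\varepsilon}$, which is better than the $R_T^{1+\varepsilon}$ available in Theorem~\ref{tm11}, so the exponent bookkeeping should close with room to spare. The only genuinely new work is tracking the pressure terms $e^tP(e^t-1)$, which are integrable by \eqref{pressure-weights}.
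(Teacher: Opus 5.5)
Your proposal is correct and follows the paper's proof: the $\theta^\beta\theta_t$ multiplier with $u_x$ replaced by $u_x-R\theta/\mu$, the extra $(1+\theta)\theta^{2\beta}\theta_x^2$ term of size $R_T^{\beta+2+\varepsilon}$, the mean-point bound $R_T^{\beta+3/2}\le CR_T^{\beta/2+1+\varepsilon}$, and then the elliptic form of the heat equation. Three minor slips should be fixed: the convection coefficient is $v_t=u_x-v$ (harmless, since $v\le C$); the Gronwall weight must be $R_T^{-1-\varepsilon}\|u_x-R\theta/\mu\|_{H^1}^2$ and not $R_T^{-1-\varepsilon}\|u_x\|_{H^1}^2$, because in normalized variables $\int_0^T\|u_x\|_{L^2}^2dt$ grows like $T$; and at the mean-temperature point Cauchy--Schwarz should pair $\|\theta^{1/2}\|_{L^2}\le C$ (not $\|\theta^{\beta+1/2}\|_{L^2}$) with $\|\theta^\beta\theta_x\|_{L^2}$, which is the pairing your stated exponent actually uses.
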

\begin{proof}
The thermal-gradient identity is now
\begin{align}
&\frac{\tilde\kappa}{2}\frac d{dt}\int_0^1
              \frac{\theta^{2\beta}\theta_x^2}{v}dx+c_v\int_0^1\theta^\beta\theta_t^2dx
 \notag\\
&=\int_0^1\frac{\theta^\beta\theta_t}{v}
      \left[\mu\left(u_x-\frac{R\theta}\mu\right)^2+
                      R\theta\left(u_x-\frac{R\theta}\mu\right)\right]dx
 -\frac{\tilde\kappa}{2}\int_0^1\frac{(u_x-v)\theta^{2\beta}\theta_x^2}{v^2}dx.
\label{normalized-thermal-identity}
\end{align}
Here
$|u_x-v|\leq|u_x-R\theta/\mu|+C(1+\theta)$.
The calculation in \eqref{thermal-gradient-identity}--\eqref{thermal-before-upper}
therefore applies with $u_x$ replaced by $u_x-R\theta/\mu$ and the additional bound
\begin{align*}
\int_0^T\int_0^1(1+\theta)\theta^{2\beta}\theta_x^2dxdt
&\leq C R_T ^{\beta+2+\varepsilon}
       \int_0^T\int_0^1\theta^{\beta-1-\varepsilon}\theta_x^2dxdt\\
&\leq C_\varepsilon R_T ^{\beta+2+\varepsilon}.
\end{align*}
Taking $\varepsilon=1/8$ gives
\begin{align*}
\sup_t\|\theta^\beta\theta_x\|_{L^2}^2+
                 \int_0^T\|\theta^{\beta/2}\theta_t\|_{L^2}^2dt
 &\leq C R_T ^{\beta+2+2\varepsilon},\\
 R_T ^{\beta+3/2}
 &\leq C R_T ^{\beta/2+1+\varepsilon},
 \qquad \sup\theta\leq C,\\
\theta_{xx}
 &=\frac{c_vv}{\kappa}\theta_t
   -\frac\mu\kappa\left(u_x-\frac{R\theta}\mu\right)^2
   -\frac{R\theta}{\kappa}\left(u_x-\frac{R\theta}\mu\right)
   \\
 &\quad+\frac{v_x}v\theta_x-\frac\beta\theta\theta_x^2.
\end{align*}
The last identity and the same two-product absorption as in
\eqref{thermal-elliptic} prove \eqref{normalized-base}.
\end{proof}

\subsection{Stability of the solutions}
\begin{cor}\label{co3.1}
Under the assumptions of this section,
\begin{align*}
&C^{-1}\leq v,\theta\leq C,\qquad
 \sup_{t\leq T}\|(v_x,u,u_x,w,w_x,\theta_x)\|_{L^2}^2\leq C,\\
&\int_0^T\left[\|(u_t,u_{xx},\theta_x,\theta_t,\theta_{xx})\|_{L^2}^2
 +\left\|u_x-\frac{R\theta}\mu\right\|_{H^1}^2\right]dt\leq C.
\end{align*}
\end{cor}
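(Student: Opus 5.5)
Corollary~\ref{co3.1} is essentially a repackaging of Lemmas~\ref{lm31}--\ref{lm37} in terms of the original variables $u$ and $w$. The plan is to read off each listed quantity from \eqref{normalized-base}, using the identities
\[
u_x=\Bigl(u_x-\frac{R\theta}{\mu}\Bigr)+\frac{R\theta}{\mu},\qquad
w_x=u_x-v,\qquad
w=u-\int_0^1u_0dy-\int_0^xv\,dy+\int_0^1\!\!\int_0^xv\,dy\,dx,
\]
together with the pointwise bounds $c\le v,\theta\le C$ from Lemma~\ref{lm37} and $\frac12\le\theta^\alpha\le2$.

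\textbf{Step 1: uniform sup bounds.}
\begin{itemize}
\item $\|v_x\|_{L^2}$ and $\|\theta_x\|_{L^2}$ are uniformly bounded directly by \eqref{normalized-base}.
\item Since $R\theta/\mu$ is bounded pointwise, $\|u_x\|_{L^2}\le\|u_x-R\theta/\mu\|_{L^2}+C$.
\item Then $\|w_x\|_{L^2}\le\|u_x\|_{L^2}+\|v\|_{L^2}\le C$.
\item Because $\int_0^1w\,dx=0$, Poincaré's inequality gives $\|w\|_{L^2}\le\|w_x\|_{L^2}$.
\item From the formula for $w$ we get $|u|\le|w|+|\hat u|+2\max v$, hence $\|u\|_{L^2}\le C$. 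Alternatively, this follows from \eqref{normalized-entropy}.
\end{itemize}

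\textbf{Step 2: time integrals.}
\begin{itemize}
\item $\int_0^T\|(u_t,\theta_t,\theta_x,\theta_{xx})\|_{L^2}^2dt$ and $\int_0^T\|u_x-R\theta/\mu\|_{H^1}^2dt$ are bounded directly by \eqref{normalized-base}.
\item For $u_{xx}$, write
\[
u_{xx}=\Bigl(u_x-\frac{R\theta}{\mu}\Bigr)_x+\Bigl(\frac{R\theta^{1-\alpha}}{\tilde\mu}\Bigr)_x .
\]
The last term equals $R(1-\alpha)\tilde\mu^{-1}\theta^{-\alpha}\theta_x$, which is pointwise bounded by $C|\theta_x|$. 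Hence
\[
\int_0^T\|u_{xx}\|_{L^2}^2dt\le C\int_0^T\Bigl(\Bigl\|\Bigl(u_x-\frac{R\theta}{\mu}\Bigr)_x\Bigr\|_{L^2}^2+\|\theta_x\|_{L^2}^2\Bigr)dt\le C.
\]
\end{itemize}

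\textbf{The delicate point: independence of $M$ and $T$.} The constants must not depend on $M$ or $T$, as needed for the continuation argument that follows. Several earlier lemmas produce error terms such as $C\alpha^2M^2$ and $C\alpha M^{5/2}$. Under \eqref{normalized-smallness} these are bounded by constants independent of $M$. The factors $R_T^{2\varepsilon}$ and $R_T^{\varepsilon}$ from Lemmas~\ref{lm34} and \ref{lm36} become fixed constants once $\sup\theta\le C$ is known from Lemma~\ref{lm37}.

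I will therefore track one case explicitly: the $\beta=0$ bound in \eqref{normalized-vx}. Substituting $R_T\le C+1$ shows that this bound, and hence \eqref{normalized-base}, holds with $C$ independent of $M$ and $T$. This is the only step needing care; the rest is direct algebra.
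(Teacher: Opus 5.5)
Your proposal is correct and takes the same approach as the paper. The paper states Corollary~\ref{co3.1} without a separate proof, as a direct consequence of Lemmas~\ref{lm31}--\ref{lm37}, chiefly \eqref{normalized-base}, once $\sup\theta\le C$ has removed the $R_T$ factors and \eqref{normalized-smallness} has absorbed the $\alpha$-powers of $M$. Your auxiliary identities ($w_x=u_x-v$, the zero mean of $w$, and $u_{xx}=(u_x-R\theta/\mu)_x+R(1-\alpha)\theta_x/\mu$) are the same ones the paper uses, for instance in Lemma~\ref{lm39}.
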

\begin{lemma}\label{lm39}
The following estimates hold:
\begin{align}
&\sup_{0<t\leq T}\sigma(t)
 \left[\|(u_t,\theta_t,u_{xx},\theta_{xx})\|_{L^2}^2+
                \left\|\left(u_x-\frac{R\theta}\mu\right)_x\right\|_{L^2}^2\right]
 +\int_0^T\sigma\|(u_{xt},\theta_{xt})\|_{L^2}^2dt\leq C,
 \label{normalized-weighted}\\
&\int_0^T\left[\|(v_x,w,w_x,w_t,w_{xx},u_{xx})\|_{L^2}^2+
                          \left\|v-\frac{R\theta}\mu\right\|_{L^2}^2\right]dt\leq C.
\label{normalized-missing-integrals}
\end{align}
For a solution on $[0,\infty)$ satisfying these estimates,
\begin{equation}\label{normalized-qualitative}
\begin{aligned}
\lim_{t\to\infty}\bigg(&\|(w,w_t,v_x,u_t,u_{xx},\theta_x,\theta_t,\theta_{xx})\|_{L^2}
 +\left\|u_x-\frac{R\theta}\mu\right\|_{H^1}\\
 &+\|(v-\bar v,\theta-\bar\theta)\|_{L^\infty}\bigg)=0,
\end{aligned}
\end{equation}
where $\bar v=\int_0^1v\,dx$ and $\bar\theta=\int_0^1\theta\,dx$.
\end{lemma}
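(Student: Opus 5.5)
For \eqref{normalized-weighted} I will follow the proof of Lemma~\ref{lm29}, now for the normalized system \eqref{normalized-system}. Differentiate the momentum equation $u_t=((\mu u_x-R\theta)/v)_x$ and the heat equation in $t$. Test them with $u_t$ and $\theta_t$, respectively. The boundary term is $-[\frac{d}{dt}(e^tP(e^t-1))\,u_t]_0^1$. I bound it by $\eta\|u_{xt}\|_{L^2}^2+C_\eta|\frac d{dt}(e^tP(e^t-1))|^2$, which is integrable in time by \eqref{pressure-weights}. The new contributions from $v_t=u_x-v$ replace $u_x$ with $u_x-v$ in the $v_t$ terms; since $v$ is bounded, these only add lower-order terms. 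This gives the analogue of \eqref{coupled-time}:
\[
\frac{d}{dt}\Bigl(\tfrac12\|u_t\|_{L^2}^2+\tfrac{c_v}{2}\|\theta_t\|_{L^2}^2\Bigr)+c\|(u_{xt},\theta_{xt})\|_{L^2}^2\leq C g(t)+C h(t)\|(u_t,\theta_t)\|_{L^2}^2 .
\]
Here $g,h$ are built from $|\frac d{dt}(e^tP)|^2$, $\|u_x-R\theta/\mu\|_{H^1}^2$, $\|\theta_x\|_{H^1}^2$, $\|\theta_t\|_{L^2}^2$ and $\|u_t\|_{L^2}^2$. By Corollary~\ref{co3.1} both $g$ and $h$ are integrable on $(0,T)$ with bounds independent of $T$. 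One point matters here: $u_x$ itself is not time-integrable, so each term must be written through $u_x-R\theta/\mu$ plus bounded multiples of $\theta$. For example, $\|u_x\|_{L^\infty}\leq C+\|u_x-R\theta/\mu\|_{L^\infty}$. The extra constants then multiply $\|(u_t,\theta_t)\|_{L^2}^2$, and those products are integrable. Next I choose $s_j\downarrow0$ with $s_j\|(u_t,\theta_t)(s_j)\|_{L^2}^2\to0$, multiply by $\sigma$, and apply Gronwall. This gives the $\sigma$-weighted bounds on $u_t,\theta_t,u_{xt},\theta_{xt}$. The bounds for $u_{xx}$, $(u_x-R\theta/\mu)_x$ and $\theta_{xx}$ then follow from the expanded momentum equation and from the elliptic identity in Lemma~\ref{lm37}, using the absorption argument of \eqref{thermal-elliptic}.

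For \eqref{normalized-missing-integrals} the key is $v-R\theta/\mu$. Write $w_x=u_x-v=(u_x-R\theta/\mu)-(v-R\theta/\mu)$. Integrating the momentum equation $w_t+w=[\frac{\mu}{v}(w_x+v-R\theta/\mu)]_x$ once in $x$ gives
\[
\frac{\mu}{v}\Bigl(u_x-\frac{R\theta}{\mu}\Bigr)=-e^tP(e^t-1)+\int_0^x(w_t+w)\,dy .
\]
This controls $u_x-R\theta/\mu$, which is already in $L^2_tL^2_x$. To get $v-R\theta/\mu$ in $L^2_t$, I compute
\[
\frac{d}{dt}\int_0^1 v\,dx=\int_0^1(u_x-v)\,dx=u(1)-u(0)-\bar v ,
\]
and combine it with the heat-equation identity for $\bar\theta$. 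I also use $\|v-\bar v\|_{L^\infty}\leq\|v_x\|_{L^2}$. So the key is a time-integrable bound on $\|v_x\|_{L^2}^2$. For this I return to \eqref{normalized-effective-gradient}: its damping term $\frac{R\theta}{\mu v}$ is bounded below, so the $L^2_t$ norm of $v_x/v-w/\mu$ is controlled by $L^2_t$ norms of $w$, $\theta_x$ and the $\alpha$-terms. The estimate for $w$ comes from $\int w\,dx=0$ and $\|w\|_{L^\infty}\leq\|w_x\|_{L^2}$. I expect the main obstacle to be breaking the circularity between $w_x$, $v_x$ and $v-R\theta/\mu$.

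To break it, I will test $w_t+w=(\cdots)_x$ against $w$. This gives
\[
\tfrac12\frac d{dt}\|w\|_{L^2}^2+\|w\|_{L^2}^2=-\int_0^1\frac\mu v\Bigl(u_x-\frac{R\theta}\mu\Bigr)w_x\,dx+\text{boundary terms}.
\]
The boundary terms are $e^tP\,w\,|_0^1$, which are integrable. The right-hand side involves $w_x$, and I will control it with $\eta\|w_x\|_{L^2}^2$ plus the integrable $\|u_x-R\theta/\mu\|_{L^2}^2$. The term $\|w_x\|_{L^2}^2$ must itself be closed. For that I estimate $v-R\theta/\mu$ through the ODE $(v-\bar v)_t+(v-\bar v)=(w_x-\overline{w_x})$ along with the mean identity. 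Once $w,w_x,v_x\in L^2_t$ are established, the integrability of $w_t$, $w_{xx}$ and $u_{xx}$ follows from the equations.

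For \eqref{normalized-qualitative} I proceed as in \eqref{positive-gradient-decay}--\eqref{positive-second-decay}. For each $h$ in the list, $h\in L^2(1,\infty)$ and $\frac d{dt}\|h\|_{L^2}^2$ is integrable: it is bounded by $2\|h\|_{L^2}\|h_t\|_{L^2}$ or by the positive-variation bound from the differential inequality above. Hence $\|h\|_{L^2}\to0$. The $L^\infty$ statements then follow from $\|f-\bar f\|_{L^\infty}\leq\|f_x\|_{L^2}$.
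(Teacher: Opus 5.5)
Your derivation of \eqref{normalized-weighted} follows the paper. Your argument for \eqref{normalized-missing-integrals}, however, does not close.

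First, testing $w_t+w=[\frac{\mu}{v}(u_x-\frac{R\theta}{\mu})]_x$ against $w$ puts $\eta\|w_x\|_{L^2}^2$ on the right with nothing on the left to absorb it. So it only moves the burden to $w_x$. Second, the equation you then use to control $w_x$ through $v-R\theta/\mu$, namely $(v-\bar v)_t+(v-\bar v)=w_x-\overline{w_x}$, is false. Since $v_t=w_x$, one has $(v-\bar v)_t=w_x-\overline{w_x}$ with no damping; the damped form is $(v-\bar v)_t+(v-\bar v)=u_x-\overline{u_x}$. As you wrote it, the forcing is the very quantity you are trying to bound. The chain $w_x\to v-R\theta/\mu\to v-\bar v\to w_x$ then produces an inequality of the form $\|w_x\|_{L^2(0,T;L^2)}\le C+\|w_x\|_{L^2(0,T;L^2)}$, which says nothing. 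The circularity you anticipate comes from routing $w$ through $\|w\|_{L^\infty}\le\|w_x\|_{L^2}$ and through the flux form of the momentum equation.

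The missing observation is that two damped equations already have forcings controlled by Lemma~\ref{lm37}. The first is $w_t+w=u_t$, which gives $\frac{d}{dt}\|w\|_{L^2}^2+\|w\|_{L^2}^2\le\|u_t\|_{L^2}^2$. The second is the pointwise relaxation identity $(v-\frac{R\theta}{\mu})_t+(v-\frac{R\theta}{\mu})=(u_x-\frac{R\theta}{\mu})-\frac{R(1-\alpha)}{\mu}\theta_t$, which follows from $v_t=u_x-v$ and $(R\theta/\mu)_t=R(1-\alpha)\theta_t/\mu$. Since $u_t$, $u_x-R\theta/\mu$ and $\theta_t$ lie in $L^2(0,T;L^2)$ with $T$-independent bounds, $w$ and $v-R\theta/\mu$ are square-integrable in time at once. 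Then $w_x=(u_x-\frac{R\theta}{\mu})-(v-\frac{R\theta}{\mu})$, then $v_x$ follows from \eqref{normalized-effective-gradient}, and finally $w_t=u_t-w$ and $w_{xx}=u_{xx}-v_x$. There is no loop at all. Your mean/deviation split could be repaired by using the forcing $u_x-\overline{u_x}$, and $\frac{d}{dt}\overline{R\theta/\mu}=\overline{R(1-\alpha)\theta_t/\mu}$ in place of the heat identity for $\bar\theta$. That is just this relaxation equation averaged.

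A smaller point concerns \eqref{normalized-qualitative}. For $u_{xx}$, $\theta_{xx}$ and $(u_x-R\theta/\mu)_x$ you cannot bound $\frac{d}{dt}\|h\|_{L^2}^2$, since $h_t$ involves third derivatives. Their decay must come from the momentum and heat equations, as in the paper, once $u_t,\theta_t,v_x,\theta_x$ and $\|u_x-R\theta/\mu\|_{L^\infty}\le C(e^t|P(e^t-1)|+\|u_t\|_{L^2})$ tend to zero.
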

\begin{proof}
The differentiated momentum identity retains $v_t=u_x-v$:
\begin{align}
&\frac12\frac d{dt}\|u_t\|_{L^2}^2+\int_0^1\frac\mu v u_{xt}^2dx
 =-\frac d{dt}(e^tP(e^t-1))[u_t]_0^1\notag\\
&\quad-\int_0^1\left[\left(\frac{\alpha\mu u_x}{v\theta}-\frac Rv\right)\theta_t
                      -\frac{\mu u_x-R\theta}{v^2}(u_x-v)\right]u_{xt}dx.
\label{normalized-momentum-time}
\end{align}
The heat identity, with the source fully differentiated, is
\begin{align}
&\frac{c_v}{2}\frac d{dt}\|\theta_t\|_{L^2}^2+\int_0^1\frac\kappa v\theta_{xt}^2dx\notag\\
 &=-\int_0^1\frac\kappa v
           \left(\frac{\beta\theta_t}\theta-\frac{u_x-v}v\right)\theta_x\theta_{xt}dx
 \notag\\
&\quad+\int_0^1\left\{
  \frac{\alpha\mu\theta^{-1}(u_x-R\theta/\mu)^2+R(u_x-R\theta/\mu)}v\right.\notag\\
 &\hspace{25mm}\left.-\frac{R(1-\alpha)}\mu
                   \frac{2\mu(u_x-R\theta/\mu)+R\theta}{v}\right\}\theta_t^2dx
 \notag\\
&\quad+\int_0^1\frac{2\mu(u_x-R\theta/\mu)+R\theta}{v}u_{xt}\theta_tdx\notag\\
 &\quad-\int_0^1\frac{\mu(u_x-R\theta/\mu)^2+R\theta(u_x-R\theta/\mu)}{v^2}
                                            (u_x-v)\theta_tdx.
\label{normalized-thermal-time}
\end{align}
In particular,
\begin{align*}
&\left(u_x-\frac{R\theta}\mu\right)_t
                  =u_{xt}-\frac{R(1-\alpha)}\mu\theta_t,\qquad
 u_x-v=\left(u_x-\frac{R\theta}\mu\right)+\frac{R\theta}\mu-v,\\
&\left|\frac{R\theta}\mu-v\right|+\left|\frac{R\theta}\mu\right|\leq C,\\
&\left\|\left(u_x-\frac{R\theta}\mu\right)^2\right\|_{L^2}^2
 +\left\|\left(u_x-\frac{R\theta}\mu\right)^3\right\|_{L^2}^2\\
&\quad\leq\left[
 \left\|u_x-\frac{R\theta}\mu\right\|_{L^\infty}^2+
 \left\|u_x-\frac{R\theta}\mu\right\|_{L^\infty}^4\right]
                         \left\|u_x-\frac{R\theta}\mu\right\|_{L^2}^2
 \leq C\left\|u_x-\frac{R\theta}\mu\right\|_{H^1}^2.
\end{align*}
The interior term of \eqref{normalized-momentum-time} satisfies
\begin{align*}
&\left|\int_0^1\left[
 \left(\frac{\alpha\mu u_x}{v\theta}-\frac Rv\right)\theta_t
          -\frac{\mu u_x-R\theta}{v^2}(u_x-v)\right]u_{xt}dx\right|\\
&\quad\leq C\left[
 \left(1+\left\|u_x-\frac{R\theta}\mu\right\|_{L^\infty}\right)\|\theta_t\|_{L^2}
 +\left\|\left(u_x-\frac{R\theta}\mu\right)^2\right\|_{L^2}
 +\left\|u_x-\frac{R\theta}\mu\right\|_{L^2}\right]\|u_{xt}\|_{L^2}\\
&\quad\leq\eta\|u_{xt}\|_{L^2}^2+C_\eta\|\theta_t\|_{L^2}^2
 +C_\eta\left\|u_x-\frac{R\theta}\mu\right\|_{H^1}^2
                                      (1+\|\theta_t\|_{L^2}^2).
\end{align*}
For the first and last integrals of \eqref{normalized-thermal-time},
\begin{align*}
&\left|\int_0^1\frac\kappa v
 \left(\frac{\beta\theta_t}\theta-\frac{u_x-v}v\right)
                                      \theta_x\theta_{xt}dx\right|\\
&\quad\leq C\left[\|\theta_x\|_{L^\infty}\|\theta_t\|_{L^2}
 +\left(1+\left\|u_x-\frac{R\theta}\mu\right\|_{L^\infty}\right)
                                      \|\theta_x\|_{L^2}\right]\|\theta_{xt}\|_{L^2}\\
&\quad\leq\eta\|\theta_{xt}\|_{L^2}^2
 +C_\eta\|\theta_x\|_{H^1}^2\|\theta_t\|_{L^2}^2
 +C_\eta\left(\left\|u_x-\frac{R\theta}\mu\right\|_{H^1}^2
                                               +\|\theta_x\|_{L^2}^2\right),\\
&\left|\int_0^1\frac{\mu(u_x-R\theta/\mu)^2+R\theta(u_x-R\theta/\mu)}{v^2}
                                          (u_x-v)\theta_tdx\right|\\
&\quad\leq C\left[
 \left\|\left(u_x-\frac{R\theta}\mu\right)^3\right\|_{L^2}
 +\left\|\left(u_x-\frac{R\theta}\mu\right)^2\right\|_{L^2}
 +\left\|u_x-\frac{R\theta}\mu\right\|_{L^2}\right]\|\theta_t\|_{L^2}\\
&\quad\leq C\left\|u_x-\frac{R\theta}\mu\right\|_{H^1}^2+C\|\theta_t\|_{L^2}^2.
\end{align*}
The remaining two integrals are bounded by
\begin{align*}
&C\int_0^1\left[1+\left|u_x-\frac{R\theta}\mu\right|
                  +\left(u_x-\frac{R\theta}\mu\right)^2\right]\theta_t^2dx\\
&\quad\leq C\|\theta_t\|_{L^2}^2
      +C\left\|u_x-\frac{R\theta}\mu\right\|_{H^1}^2\|\theta_t\|_{L^2}^2,\\
&\left|\int_0^1\frac{2\mu(u_x-R\theta/\mu)+R\theta}{v}u_{xt}\theta_tdx\right|\\
&\quad\leq C\left(1+\left\|u_x-\frac{R\theta}\mu\right\|_{L^\infty}\right)
                                  \|u_{xt}\|_{L^2}\|\theta_t\|_{L^2}\\
&\quad\leq\eta\|u_{xt}\|_{L^2}^2
 +C_\eta\left(1+\left\|u_x-\frac{R\theta}\mu\right\|_{H^1}^2\right)
                                                    \|\theta_t\|_{L^2}^2.
\end{align*}
The boundary term is at most
\[
 \eta\|u_{xt}\|_{L^2}^2+
 C_\eta\left|\frac d{dt}(e^tP(e^t-1))\right|^2.
\]
Choosing $\eta$ below the fixed lower bounds of $\mu/v$ and $\kappa/v$, we obtain
\begin{align}
&\frac d{dt}\left(\frac12\|u_t\|_{L^2}^2+\frac{c_v}{2}\|\theta_t\|_{L^2}^2\right)
                         +c\|(u_{xt},\theta_{xt})\|_{L^2}^2\notag\\
&\leq C\left[\left|\frac d{dt}(e^tP(e^t-1))\right|^2+
          \left\|u_x-\frac{R\theta}\mu\right\|_{H^1}^2
                   +\|\theta_x\|_{L^2}^2+\|\theta_t\|_{L^2}^2\right]\notag\\
&\quad+C\left[\left\|u_x-\frac{R\theta}\mu\right\|_{H^1}^2+
                  \|\theta_x\|_{H^1}^2\right]
                         (\|u_t\|_{L^2}^2+\|\theta_t\|_{L^2}^2).
\label{normalized-coupled-time}
\end{align}
Multiply by $\sigma$ and integrate from $s_j$ with
$s_j\|(u_t,\theta_t)(s_j)\|_{L^2}^2\to0$.
Using \eqref{normalized-base}, \eqref{pressure-weights} and
$\sigma'=\mathbf1_{(0,1)}$, and then letting $j\to\infty$, we obtain
\begin{align*}
&\sup_{0<t\leq T}\sigma(t)\|(u_t,\theta_t)(t)\|_{L^2}^2
       +\int_0^T\sigma\|(u_{xt},\theta_{xt})\|_{L^2}^2dt\\
&\quad\leq C\left[1+\int_0^{\min\{1,T\}}
                              \|(u_t,\theta_t)\|_{L^2}^2dt\right]\\
&\qquad\times\exp\left\{C\int_0^T\left[
 \left\|u_x-\frac{R\theta}\mu\right\|_{H^1}^2+
                                  \|\theta_x\|_{H^1}^2\right]dt\right\}\leq C.
\end{align*}
The spatial estimates follow from
\begin{align*}
\left(u_x-\frac{R\theta}\mu\right)_x
 &=\frac v\mu u_t-
          \left(\frac{\alpha\theta_x}\theta-\frac{v_x}v\right)
                          \left(u_x-\frac{R\theta}\mu\right),\\
\left\|\left(u_x-\frac{R\theta}\mu\right)_x\right\|_{L^2}
 &\leq C\left(\|u_t\|_{L^2}+\left\|u_x-\frac{R\theta}\mu\right\|_{L^2}\right),
\\
u_{xx}&=\left(u_x-\frac{R\theta}\mu\right)_x+
                              \frac{R(1-\alpha)}\mu\theta_x.
\end{align*}
This proves \eqref{normalized-weighted}.

For the remaining integrals,
\begin{align}
&\left(v-\frac{R\theta}\mu\right)_t+
        v-\frac{R\theta}\mu
 =u_x-\frac{R\theta}\mu-\frac{R(1-\alpha)}\mu\theta_t,
 \qquad w_t+w=u_t,\notag\\
&\frac d{dt}\left\|v-\frac{R\theta}\mu\right\|_{L^2}^2+
                   \left\|v-\frac{R\theta}\mu\right\|_{L^2}^2
 \leq C\left(\left\|u_x-\frac{R\theta}\mu\right\|_{L^2}^2+\|\theta_t\|_{L^2}^2\right),
 \notag\\
&\frac d{dt}\|w\|_{L^2}^2+\|w\|_{L^2}^2\leq\|u_t\|_{L^2}^2,
 \qquad w_x=\left(u_x-\frac{R\theta}\mu\right)-
                               \left(v-\frac{R\theta}\mu\right).
\label{normalized-relaxation}
\end{align}
Integrating \eqref{normalized-relaxation} and using
\eqref{normalized-base}, we obtain
\begin{align*}
 &\int_0^T\left[\|(w,w_x)\|_{L^2}^2+
               \left\|v-\frac{R\theta}\mu\right\|_{L^2}^2\right]dt
 \\
 &\quad\leq C+C\int_0^T\left[
 \left\|u_x-\frac{R\theta}\mu\right\|_{L^2}^2+
                          \|(u_t,\theta_t)\|_{L^2}^2\right]dt\leq C.
\end{align*}
Testing \eqref{normalized-effective-gradient} again, with the now fixed
temperature interval, gives
\begin{align*}
&\frac d{dt}\left\|\frac{v_x}v-\frac w\mu\right\|_{L^2}^2+
                  c\left\|\frac{v_x}v-\frac w\mu\right\|_{L^2}^2\\
&\quad\leq C(\|w\|_{L^2}^2+\|\theta_x\|_{L^2}^2)
 +C\alpha^2\int_0^1\left[w^2\theta_t^2+
                                  (1+w_x^2)\theta_x^2\right]dx,\\
&\alpha^2\int_0^T\int_0^1
           [w^2\theta_t^2+(1+w_x^2)\theta_x^2]dxdt\leq C\alpha^2,
 \qquad\int_0^T\|v_x\|_{L^2}^2dt\leq C.
\end{align*}
Finally $w_{xx}=u_{xx}-v_x$ and $w_t=u_t-w$ complete
\eqref{normalized-missing-integrals}.

For the limit, \eqref{normalized-coupled-time},
\eqref{normalized-base} and \eqref{normalized-weighted} imply
\begin{align*}
&\int_1^\infty(\|u_t\|_{L^2}^2+\|\theta_t\|_{L^2}^2)dt\leq C,\\
&\int_1^\infty\left[\frac d{dt}
  \left(\tfrac12\|u_t\|_{L^2}^2+\tfrac{c_v}{2}\|\theta_t\|_{L^2}^2\right)\right]_+dt\\
&\quad\leq C\int_1^\infty\left[
 \left|\frac d{dt}(e^tP(e^t-1))\right|^2+
 \left\|u_x-\frac{R\theta}\mu\right\|_{H^1}^2+
                       \|\theta_x\|_{L^2}^2+\|\theta_t\|_{L^2}^2\right]dt\\
&\qquad+C\sup_{t\geq1}\|(u_t,\theta_t)\|_{L^2}^2
 \int_1^\infty\left[\left\|u_x-\frac{R\theta}\mu\right\|_{H^1}^2+
                                            \|\theta_x\|_{H^1}^2\right]dt
 \leq C.
\end{align*}
Consequently, $\|(u_t,\theta_t)(t)\|_{L^2}\to0$.
Since $\theta_x=0$ at $x=0,1$,
\begin{align*}
\int_1^\infty\left|\frac d{dt}\|\theta_x\|_{L^2}^2\right|dt
&\leq2\int_1^\infty\|\theta_{xx}\|_{L^2}\|\theta_t\|_{L^2}dt\\
&\leq\int_1^\infty(\|\theta_{xx}\|_{L^2}^2+\|\theta_t\|_{L^2}^2)dt\leq C.
\end{align*}
Together with $\int_1^\infty\|\theta_x\|_{L^2}^2dt\leq C$, this gives
$\|\theta_x(t)\|_{L^2}\to0$.
For a scalar function $\mathcal F$ satisfying
$\int_0^\infty|\mathcal F(s)|^pds\leq C$, $1\leq p<\infty$,
H\"older's inequality gives, when $p>1$,
\begin{align*}
\left|\int_0^te^{-c(t-s)}\mathcal F(s)ds\right|
&\leq e^{-ct/2}(t/2)^{1-1/p}
                     \left(\int_0^{t/2}|\mathcal F(s)|^pds\right)^{1/p}\\
&\quad+\left(\frac{p-1}{cp}\right)^{1-1/p}
                     \left(\int_{t/2}^t|\mathcal F(s)|^pds\right)^{1/p}
 \longrightarrow0.
\end{align*}
For $p=1$,
\[
 \left|\int_0^te^{-c(t-s)}\mathcal F(s)ds\right|
 \leq e^{-ct/2}\int_0^{t/2}|\mathcal F(s)|ds+
                         \int_{t/2}^t|\mathcal F(s)|ds\longrightarrow0.
\]
Applying this to \eqref{normalized-relaxation} and to the preceding
estimate for $v_x/v-w/\mu$, we obtain
\begin{equation*}
\lim_{t\to\infty}\left[\|w\|_{L^2}+
 \left\|v-\frac{R\theta}\mu\right\|_{L^2}+\|v_x\|_{L^2}\right]=0.
\end{equation*}
Also, $e^tP(e^t-1)\to0$ by \eqref{pressure-weights}, and
\begin{align*}
&\left\|\frac\mu v\left(u_x-\frac{R\theta}\mu\right)\right\|_{L^\infty}
 \leq e^t|P(e^t-1)|+\|u_t\|_{L^2}\longrightarrow0,\\
&\left\|\left(u_x-\frac{R\theta}\mu\right)_x\right\|_{L^2}
 \leq C\|u_t\|_{L^2}+
 C(\|v_x\|_{L^2}+\|\theta_x\|_{L^2})
                      \left\|u_x-\frac{R\theta}\mu\right\|_{L^\infty}
 \longrightarrow0,\\
&\|u_{xx}\|_{L^2}\leq\left\|\left(u_x-\frac{R\theta}\mu\right)_x\right\|_{L^2}
                         +C\|\theta_x\|_{L^2}\longrightarrow0,\\
&\|\theta_{xx}\|_{L^2}^2\leq C\left(\|\theta_t\|_{L^2}^2+
              \left\|u_x-\frac{R\theta}\mu\right\|_{H^1}^2
                           +\|\theta_x\|_{L^2}^2\right)\longrightarrow0.
\end{align*}
Together with $w_t=u_t-w$ and Poincar\'e's inequality, these give
\eqref{normalized-qualitative}.

\end{proof}
\begin{lemma}\label{lm310}\label{profile-comparison}
The following comparison holds for every finite $T$:
\begin{align}
&\left\|v-A\right\|_{H^1}^2+
 \left\|\theta-\left(\frac{\tilde\mu A}R\right)^{1/(1-\alpha)}\right\|_{H^1}^2
 +\left\|u-\int_0^1u_0dx-A(x-\tfrac12)\right\|_{H^1}^2\notag\\
&\quad\leq C\left[\left\|v-\frac{R\theta}\mu\right\|_{L^2}^2+
        \|v_x\|_{L^2}^2+\|\theta_x\|_{L^2}^2+\|w\|_{L^2}^2+
        \left\|u_x-\frac{R\theta}\mu\right\|_{L^2}^2+e^{2t}|P(e^t-1)|^2\right],
\label{profile-square}\\
&\int_0^T\left[\|v-A\|_{H^1}^2+
  \left\|\theta-\left(\frac{\tilde\mu A}R\right)^{1/(1-\alpha)}\right\|_{H^1}^2
 +\left\|u-\int_0^1u_0dx-A(x-\tfrac12)\right\|_{H^1}^2\right]dt\leq C.
\label{profile-square-integral}
\end{align}
For a solution on $[0,\infty)$ satisfying Lemma~\ref{lm39},
\begin{equation*}
\lim_{t\to\infty}\left(\|(w_x,w_{xx})\|_{L^2}+
 \|v-A\|_{L^\infty}+
 \left\|\theta-\left(\frac{\tilde\mu A}R\right)^{1/(1-\alpha)}\right\|_{L^\infty}\right)=0.
\end{equation*}
\end{lemma}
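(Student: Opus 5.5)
The plan is to reduce everything to the normalized energy identity that defines $A$ in \eqref{th2-A}, rewritten in the variables of \eqref{time-change}. Returning to physical time in \eqref{energy-work}, the total energy is $\int_0^1(c_v\theta+\frac12\tilde u^2+Pv)dx$. In normalized variables $\int_0^1 v\,dx$ is multiplied by $e^t$, so the pressure work becomes $e^tP(e^t-1)\int_0^1 v\,dx$. Denote by $E(t)$ the right side of \eqref{th2-A}. Then
\[
\int_0^1\Big(c_v\theta+\tfrac12\tilde u^2\Big)dx+e^tP(e^t-1)\int_0^1v\,dx=E(t).
\]
The profile $A$ is determined by $c_v(\tilde\mu A/R)^{1/(1-\alpha)}+A^2/24=E$. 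Strict monotonicity of the left side in $A>0$, together with Lemma~\ref{lm31} ($E$ bounded above and below), gives $c\le A\le C$. The constant $1/24$ is $\frac12\int_0^1(x-\frac12)^2dx$.

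The key decomposition is $\tilde u=w+\int_0^xv\,dy-\int_0^1\int_0^xv\,dy\,dx$ from \eqref{centered-w}. Set $\bar v=\int_0^1v\,dx$. Then
\[
\tilde u-\bar v(x-\tfrac12)=w+\int_0^x(v-\bar v)dy-\int_0^1\int_0^x(v-\bar v)dy\,dx,
\]
so $\|\tilde u-\bar v(x-\frac12)\|_{H^1}\le C(\|w\|_{L^2}+\|w_x\|_{L^2}+\|v_x\|_{L^2})$ by Poincaré's inequality. Here $w_x=(u_x-R\theta/\mu)-(v-R\theta/\mu)$, which is controlled by the right side of \eqref{profile-square}. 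Next let $\Theta=(\tilde\mu\bar v/R)^{1/(1-\alpha)}$, i.e.\ the temperature with $R\Theta/\tilde\mu\Theta^\alpha=\bar v$. Since $\theta$ and $v$ lie in fixed compact intervals (Corollary~\ref{co3.1}) and the map $\theta\mapsto R\theta^{1-\alpha}/\tilde\mu$ has derivative bounded above and below uniformly in $\alpha\le\frac12$, I obtain
\[
\|\theta-\bar\theta\|+|\bar\theta-\Theta|\le C\Big(\|\theta_x\|_{L^2}+\Big\|v-\frac{R\theta}\mu\Big\|_{L^2}+\|v_x\|_{L^2}\Big).
\]
Substituting these into the energy identity gives $F(\bar v)=E+O(\text{RHS}^{1/2})$ with $F(a)=c_v(\tilde\mu a/R)^{1/(1-\alpha)}+a^2/24$. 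The error has two sources. The first is the quadratic remainder in $\frac12\int\tilde u^2-\bar v^2/24$, which is bounded by $C\|\tilde u-\bar v(x-\frac12)\|_{L^2}$ times bounded quantities. The second is the pressure term $e^tP(e^t-1)\bar v$. Since $F'\ge c>0$ on the relevant range, $|\bar v-A|\le C\,\text{RHS}^{1/2}$, and the same holds for $|\Theta-(\tilde\mu A/R)^{1/(1-\alpha)}|$. Triangle inequalities then give \eqref{profile-square}, including the $u$-component, since $u-\int u_0-A(x-\frac12)=\tilde u-\bar v(x-\frac12)+(\bar v-A)(x-\frac12)$.

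For \eqref{profile-square-integral}, I integrate \eqref{profile-square} in time. Each term on the right is integrable by Lemma~\ref{lm37}, \eqref{normalized-missing-integrals}, \eqref{normalized-first-square} and \eqref{pressure-weights}. For the limit statement, \eqref{normalized-qualitative} together with $e^tP(e^t-1)\to0$ makes the right side of \eqref{profile-square} tend to zero; the embedding $H^1\subset L^\infty$ then gives the $L^\infty$ convergence. The limits of $w_x$ and $w_{xx}=u_{xx}-v_x$ also come from \eqref{normalized-qualitative}.

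The main obstacle is the quadratic-versus-linear bookkeeping in the energy identity. The kinetic term is quadratic in $\tilde u$, so I must verify that $\int\tilde u^2-\bar v^2/12$ is Lipschitz in $\tilde u-\bar v(x-\frac12)$ with a bounded constant; this follows from $\sup\|u\|_{L^2}\le C$. I must also ensure the pressure term enters only linearly, as $e^tP\cdot C$, so that squaring produces exactly $e^{2t}|P|^2$.
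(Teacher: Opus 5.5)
Your proof of \eqref{profile-square} and \eqref{profile-square-integral} is essentially the paper's. It uses the same energy identity \eqref{normalized-root-energy} and the same monotone map $F(a)=c_v(\tilde\mu a/R)^{1/(1-\alpha)}+a^2/24$ with $F'(a)\geq a/12$. It also uses the same splitting of $\tilde u-\bar v(x-\frac12)$ through $w$, and the same Lipschitz comparison of $\theta$ with $(\tilde\mu\bar v/R)^{1/(1-\alpha)}$ via $\|R\theta/\mu-\bar v\|_{L^2}$. One difference: you bound $\|u_x-A\|_{L^2}$ through $w_x+v-\bar v$ rather than through $u_x-R\theta/\mu$ and the temperature profile, which is equivalent.

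The limit statement, however, is not justified as written. The term $\|v-R\theta/\mu\|_{L^2}$ on the right of \eqref{profile-square} is not among the quantities listed in \eqref{normalized-qualitative}. The same term enters your claim that $\|w_x\|_{L^2}\to0$, since $w_x=(u_x-R\theta/\mu)-(v-R\theta/\mu)$. It also does not follow from the $L^\infty$ decay of $v-\bar v$ and $\theta-\bar\theta$ alone, because that says nothing about the spatial mean of $v-R\theta/\mu$.

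The paper closes this with the damping equation \eqref{normalized-relaxation}, $(v-R\theta/\mu)_t+(v-R\theta/\mu)=u_x-R\theta/\mu-R(1-\alpha)\theta_t/\mu$. Its right side is square integrable on $(0,\infty)$. Combining this with the decay estimate for $\int_0^te^{-c(t-s)}\mathcal F(s)ds$ from the proof of Lemma~\ref{lm39} gives $\|v-R\theta/\mu\|_{L^2}\to0$; that limit is in fact displayed inside that proof.

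Alternatively, you can use $\int_0^1w\,dx=0$. Write $w_x=k(t)+h$, where $k$ is the mean of $R\theta/\mu-v$. The decay of $\|u_x-R\theta/\mu\|_{L^2}$ and of the oscillations of $v$ and $\theta$ gives $\|h\|_{L^2}\to0$. Then $w=k(x-\frac12)+H-\int_0^1H\,dx$ with $H=\int_0^xh\,dy$, so $|k|/\sqrt{12}\leq\|w\|_{L^2}+2\|h\|_{L^2}\to0$. Hence $\|v-R\theta/\mu\|_{L^2}\to0$ and $\|w_x\|_{L^2}\to0$. With either addition, the rest of your limit argument goes through.
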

\begin{proof}
In the remainder of this section $A(t)$ denotes the root in logarithmic time; hence
its physical-time value is $A(\log(1+s))$.
Integration of \eqref{normalized-system} gives
\begin{align}
&c_v\int_0^1\theta dx+\frac12\left\|u-\int_0^1u_0dx\right\|_{L^2}^2
                           +e^tP(e^t-1)\int_0^1v dx\notag\\
&\quad=\int_0^1\left[P(0)v_0+c_v\theta_0+\frac12u_0^2\right]dx
 -\frac12\left(\int_0^1u_0dx\right)^2\notag\\
 &\qquad+\int_0^te^{2s}P'(e^s-1)\int_0^1v(x,s)dx\,ds.\label{normalized-root-energy}
\end{align}
The left side is positive, since $P\geq0$ and $\theta>0$.
For $0\leq\alpha\leq1/2$,
\begin{align*}
&\frac d{da}\left[c_v\left(\frac{\tilde\mu a}R\right)^{1/(1-\alpha)}
                              +\frac{a^2}{24}\right]
 =\frac{c_v}{1-\alpha}\left(\frac{\tilde\mu}R\right)^{1/(1-\alpha)}
                a^{\alpha/(1-\alpha)}+\frac a{12}>0,\qquad a>0,\\
&c_v\left(\frac{\tilde\mu a}R\right)^{1/(1-\alpha)}+\frac{a^2}{24}
 \longrightarrow0\quad(a\downarrow0),\qquad
 \longrightarrow\infty\quad(a\to\infty).
\end{align*}
There is therefore a unique positive root, on the whole finite interval,
\begin{align}
&c_v\left(\frac{\tilde\mu A(t)}R\right)^{1/(1-\alpha)}+\frac{A(t)^2}{24}\notag\\
 &\quad=c_v\int_0^1\theta dx+
       \frac12\left\|u-\int_0^1u_0dx\right\|_{L^2}^2
                         +e^tP(e^t-1)\int_0^1v dx,\label{normalized-root}\\
&0<c\leq A(t)\leq C,\qquad
A'(t)=\frac{e^{2t}P'(e^t-1)\int_0^1v dx}
 {\displaystyle\frac{c_v}{1-\alpha}
       (\tilde\mu/R)^{1/(1-\alpha)}A(t)^{\alpha/(1-\alpha)}+A(t)/12}.
\label{root-bounds}
\end{align}
The constants follow from Lemma~\ref{lm31} and
\eqref{pressure-weights}; in particular $|A'|\leq Ce^{2t}|P'(e^t-1)|$.

The map
$a\mapsto(\tilde\mu a/R)^{1/(1-\alpha)}$ has a bounded derivative
on the fixed positive intervals in \eqref{normalized-base} and
\eqref{root-bounds}. Hence
\begin{align*}
&\left\|\theta-\left(\frac{\tilde\mu\bar v}R\right)^{1/(1-\alpha)}\right\|_{L^2}
 \leq C\left\|\frac{R\theta}\mu-\bar v\right\|_{L^2}
 \leq C\left(\left\|v-\frac{R\theta}\mu\right\|_{L^2}+\|v_x\|_{L^2}\right),\\
&\left\|u-\int_0^1u_0dx-\bar v(x-\tfrac12)\right\|_{L^2}
 \leq\|w\|_{L^2}+C\|v-\bar v\|_{L^2}\leq\|w\|_{L^2}+C\|v_x\|_{L^2},\\
&\left|\frac12\left\|u-\int_0^1u_0dx\right\|_{L^2}^2-
                         \frac{\bar v^2}{24}\right|
 \leq C(\|w\|_{L^2}+\|v_x\|_{L^2}),\\
&\left|c_v\left(\frac{\tilde\mu\bar v}R\right)^{1/(1-\alpha)}+
 \frac{\bar v^2}{24}-c_v\left(\frac{\tilde\mu A}R\right)^{1/(1-\alpha)}-
 \frac{A^2}{24}\right|\\
&\qquad\leq C\left(\left\|v-\frac{R\theta}\mu\right\|_{L^2}+
                         \|v_x\|_{L^2}+\|w\|_{L^2}+e^t|P(e^t-1)|\right),\\
&|\bar v-A|\leq C\left(\left\|v-\frac{R\theta}\mu\right\|_{L^2}+
                        \|v_x\|_{L^2}+\|w\|_{L^2}+e^t|P(e^t-1)|\right).
\end{align*}
The last inequality uses the derivative in \eqref{root-bounds}, bounded
below by $\min\{\bar v,A\}/12$. Finally,
\begin{align*}
&\left\|u-\int_0^1u_0dx-A(x-\tfrac12)\right\|_{L^2}
                         \leq\|w\|_{L^2}+C\|v-A\|_{L^2},\\
&\|u_x-A\|_{L^2}\leq\left\|u_x-\frac{R\theta}\mu\right\|_{L^2}+
 C\left\|\theta-\left(\frac{\tilde\mu A}R\right)^{1/(1-\alpha)}\right\|_{L^2}.
\end{align*}
Squaring proves \eqref{profile-square}.
Integration and \eqref{normalized-missing-integrals} prove
\eqref{profile-square-integral}.
Moreover,
\begin{align*}
&\|w_x\|_{L^2}\leq\left\|u_x-\frac{R\theta}\mu\right\|_{L^2}
 +\left\|v-\frac{R\theta}\mu\right\|_{L^2},\qquad
 \|w_{xx}\|_{L^2}\leq\|u_{xx}\|_{L^2}+\|v_x\|_{L^2}.
\end{align*}
The limit follows from \eqref{profile-square}, \eqref{normalized-qualitative},
and \eqref{normalized-relaxation}.
\end{proof}

\begin{lemma}\label{lm311}
There exist constants $C,\lambda>0$, independent of $T$, such that
\begin{align}
&\left\|v-A(t)\right\|_{H^1}+
 \left\|\theta-\left(\frac{\tilde\mu A(t)}R\right)^{1/(1-\alpha)}\right\|_{H^1}
 +\left\|u-\int_0^1u_0dx-A(t)(x-\tfrac12)\right\|_{H^1}
                        \leq Ce^{-\lambda t},\label{normalized-profile-rate}\\
&\int_0^T\left\|u_x-\frac{R\theta}\mu\right\|_{L^1}dt\leq C.
\label{normalized-residual-integral}
\end{align}
For $1\leq t\leq T$, after increasing $C$ if necessary,
\begin{equation}\label{normalized-full-rate}
\|w\|_{H^1}+\left\|u_x-\frac{R\theta}\mu\right\|_{H^1}
 +\|(u_t,\theta_t,u_{xx},w_{xx},\theta_{xx})\|_{L^2}\leq Ce^{-\lambda t}.
\end{equation}
\end{lemma}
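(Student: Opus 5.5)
The plan is to build an energy functional $\mathcal E(t)$ equivalent to the squared deviation on the left of \eqref{profile-square}, show $\mathcal E'+c\,\mathcal E\le C\,\mathcal G(t)$ with an exponentially decaying forcing $\mathcal G$, and conclude by Gronwall. Concretely, I take
\[
\mathcal E(t)=\|w\|_{L^2}^2+\Bigl\|v-\tfrac{R\theta}\mu\Bigr\|_{L^2}^2+K_1\Bigl\|\tfrac{v_x}v-\tfrac w\mu\Bigr\|_{L^2}^2+K_2\|\theta_x\|_{L^2}^2+K_3\Bigl\|u_x-\tfrac{R\theta}\mu+\tfrac{e^tP(e^t-1)v}\mu\Bigr\|_{L^2}^2 ,
\]
together with the temperature deviation from its mean. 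The forcing is
\[
\mathcal G(t)=e^{2t}|P(e^t-1)|^2+\Bigl|\tfrac d{dt}(e^tP(e^t-1))\Bigr|^2+e^{4t}|P'(e^t-1)|^2 .
\]
By \eqref{pressure-weights}, $e^{(2+2\delta)t}\mathcal G$ is integrable, since $e^{2t}P^2\le Ce^{-2\delta t}$ and $e^{(4+2\delta)t}|P'|^2$ is integrable. Hence $\int_0^te^{-c(t-s)}\mathcal G(s)\,ds\le Ce^{-\min\{c,2\delta\}t}$.

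The dissipative inequalities come from the relaxation identities \eqref{normalized-relaxation}, the effective-gradient equation \eqref{normalized-effective-gradient}, the stress identity \eqref{normalized-stress-identity}, and the $\theta_x$ equation obtained by testing the heat equation with $-\theta_{xx}$. All coefficients are now fixed by Lemma \ref{lm37} and Corollary \ref{co3.1}.

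\begin{itemize}
\item For $w$ and $v-R\theta/\mu$, \eqref{normalized-relaxation} already gives damping, forced by $\|u_t\|^2$, $\|u_x-R\theta/\mu\|^2$ and $\|\theta_t\|^2$.
\item The stress identity supplies dissipation $\|(u_x-R\theta/\mu+\cdots)_x\|^2$. The momentum equation bounds this below by $c\|u_t\|^2$, and since the stress vanishes at the boundary, Poincar\'e gives control of the stress itself.
\item The heat equation supplies dissipation $\|\theta_{xx}\|^2\gtrsim\|\theta_x\|^2$ and $\|\theta_t\|^2$.
\item \eqref{normalized-effective-gradient} gives damping of $v_x/v-w/\mu$. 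Its $\alpha$ terms are quadratic in small quantities times bounded ones, so they are absorbed with the smallness of $\alpha$ and the bounds of Corollary \ref{co3.1}.
\end{itemize}

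Choosing $K_3\gg K_2\gg K_1\gg1$ in the right order, the sources of each equation are dominated by dissipation of the others. The nonlinear terms (cubic in the stress, products with $\theta_t$) are handled as in Lemma \ref{lm36} by interpolation. Their coefficients, for example $\|u_x-R\theta/\mu\|_{H^1}^2$ or $\|\theta_x\|_{H^1}^2$, are integrable in time by \eqref{normalized-base}, so they fit a Gronwall inequality of the form $\mathcal E'+c\mathcal E\le C\mathcal G+Cg(t)\mathcal E$ with $g\in L^1$. This yields $\mathcal E\le Ce^{-\lambda t}$. Lemma \ref{profile-comparison} then converts this into \eqref{normalized-profile-rate}, using also $|\bar v-A|$ and the mean-value part already contained in \eqref{profile-square}.

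\textbf{Main obstacle: closing the loop between $w$ and $u_t$.} The forcing of $\|w\|^2$ is $\|u_t\|^2$, while $\theta-\bar\theta$ and $v-\bar v$ only enter through gradients. The $w$-equation therefore cannot give damping by itself and must borrow $\|u_t\|^2$ from the stress dissipation with a large weight $K_3$. In turn, the stress identity's source $J_1$ contains $\|\theta^\beta\theta_x\|^2$, which must come from the $\theta_{xx}$ dissipation with weight $K_2$. I would first verify that this triangular ordering closes. If it does not, I would instead add the time-derivative energy \eqref{normalized-coupled-time} with a small weight to get $\|u_t\|^2$ directly.

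For \eqref{normalized-residual-integral}, I bound $\|u_x-R\theta/\mu\|_{L^1}$ by $\|u_x-A\|_{L^2}+C\|\theta-(\tilde\mu A/R)^{1/(1-\alpha)}\|_{L^2}$, where the mean parts cancel by the definition of the profile. This is at most $Ce^{-\lambda t}$, which is integrable.

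For \eqref{normalized-full-rate} on $t\ge1$, I redo the argument for \eqref{normalized-coupled-time} with weight $e^{\lambda t}$ (shrinking $\lambda$ if needed). Its right side is $\le Ce^{-\lambda t}$ plus integrable-coefficient terms times $\|(u_t,\theta_t)\|^2$. Integrating from a point in $[t-1,t]$ where the integrals of the first-order rates give good values bounds $\|(u_t,\theta_t)\|_{L^2}^2\le Ce^{-\lambda t}$. The elliptic relations in the proof of Lemma \ref{lm39} for $(u_x-R\theta/\mu)_x$, $u_{xx}$ and $\theta_{xx}$, together with $w_{xx}=u_{xx}-v_x$ and $w_x=(u_x-R\theta/\mu)-(v-R\theta/\mu)$, then give the remaining terms.
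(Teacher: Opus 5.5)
\textbf{There is a genuine gap.} It lies in the coupling between the stress and the temperature, not between $w$ and $u_t$.

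Write $S=u_x-\frac{R\theta}{\mu}+\frac{e^tP(e^t-1)v}{\mu}$. After the heat equation is inserted into $J_1$ of \eqref{normalized-stress-identity}, the stress identity carries the linear source $\frac{R\tilde\kappa(1-\alpha)}{\tilde\mu c_v}\int_0^1\frac{\theta^{\beta-\alpha}\theta_x}{v}S_x\,dx$. With weight $K_3$ this costs $CK_3\|\theta_x\|_{L^2}^2$, which only $K_2\|\theta_{xx}\|_{L^2}^2$ can pay, so $K_2\gtrsim K_3$. Conversely, testing the heat equation by $-\theta_{xx}$ produces $-\int_0^1\frac{R\theta}{v}\bigl(u_x-\frac{R\theta}{\mu}\bigr)\theta_{xx}\,dx$. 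This is again linear with an $O(1)$ coefficient, and only the stress dissipation can pay it, so $K_3\gtrsim K_2$.

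Hence no hierarchy $K_3\gg K_2$ (or the reverse) can close. Your own remark that $J_1$ must be paid by the $K_2$-dissipation already contradicts $K_3\gg K_2$. The two weights would have to be tuned against each other, which is a symmetrization problem. You give no argument that a compatible choice exists, least of all when $v,\theta$ are only known to lie in fixed compact intervals rather than near constants.

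The obstacle you single out is not the real one. The quantities $w$, $v_x$ and $v-\frac{R\theta}{\mu}$ enter the $(S,\theta)$ energy only through bounded coefficients, never as linear sources. Since $u_t=\bigl(\frac{\mu}{v}S\bigr)_x$ with $S=0$ at the endpoints gives $\|u_t\|_{L^2}\le C\|S_x\|_{L^2}$, they are simply slaved through \eqref{damping-weighted}.

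The missing idea is the paper's Step~1. Test the heat equation by $m_0\theta_t$ with $m_0=\frac{(1-\alpha)R}{\tilde\mu^2}\bigl(\frac{\tilde\mu A(t_0)}{R}\bigr)^{-2\alpha/(1-\alpha)}$. This is exactly the value of $\frac{R(1-\alpha)}{\mu}\big/\frac{R\theta}{v}$ at the frozen state $v=A(t_0)$, $\theta=(\tilde\mu A(t_0)/R)^{1/(1-\alpha)}$. The cross term $-\int_0^1\frac{R(1-\alpha)}{\mu}S\theta_t\,dx$ of the stress equation then cancels the leading part of $m_0\int_0^1\frac{\mu u_x}{v}S\theta_t\,dx$ from the heat source. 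What remains is the energy $\frac12\|S\|_{L^2}^2+c_*\|\theta_x\|_{L^2}^2$ with dissipation $\|S_x\|_{L^2}^2+\|\theta_t\|_{L^2}^2$. This dissipation controls the energy by Poincar\'e ($S=\theta_x=0$ at $x=0,1$) and \eqref{local-elliptic}.

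Because the multiplier and $\kappa/v$ are frozen, the remainders $I_1,\dots,I_5$ are $O(\eta)$ only inside \eqref{local-neighborhood}. That is why the paper chooses $t_0$ by averaging \eqref{profile-square-integral} over a fixed window and then runs a no-exit continuity argument. The same step is what makes $C,\lambda$ independent of $T$. A global Gronwall inequality of your type would need either this localization, or a variable symmetrizer such as $\frac{(1-\alpha)v}{\mu\theta}$ with its $x$- and $t$-derivative errors shown to have integrable-in-time coefficients. The proposal contains neither.

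A smaller slip: $e^{(2+2\delta)t}\mathcal G$ is not integrable under \eqref{pressure-weights}. What holds is $e^{2\delta t}\mathcal G\in L^1$, and that is enough for your bound on $\int_0^te^{-c(t-s)}\mathcal G(s)\,ds$.
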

\begin{proof}
{\it Step 1.} Fix a time $t_0\geq1$ and retain $A(t_0)$ as a constant reference.
There exists $\eta>0$, depending only on the fixed compact intervals,
such that, on any interval starting at $t_0$ where
\begin{align}
&\|v-A(t_0)\|_{L^\infty}+
 \left\|\theta-\left(\frac{\tilde\mu A(t_0)}R\right)^{1/(1-\alpha)}\right\|_{L^\infty}
 +\|v_x\|_{L^2}+\|\theta_x\|_{L^2}\notag\\
&\qquad+\left\|u_x-\frac{R\theta}\mu+
                   \frac{e^tP(e^t-1)v}\mu\right\|_{L^2}
                         +e^t|P(e^t-1)|\leq\eta,\label{local-neighborhood}
\end{align}
one has
\begin{align}
&\frac d{dt}\left[\frac12\left\|u_x-\frac{R\theta}\mu+
              \frac{e^tP(e^t-1)v}\mu\right\|_{L^2}^2\right.\notag\\
 &\hspace{15mm}\left.+\frac{(1-\alpha)\tilde\kappa}{2\tilde\mu}
       \left(\frac{\tilde\mu A(t_0)}R\right)^{(\beta-\alpha-1)/(1-\alpha)}
                                       \|\theta_x\|_{L^2}^2\right]\notag\\
&\quad+c\left[\left\|\left(u_x-\frac{R\theta}\mu+
                   \frac{e^tP(e^t-1)v}\mu\right)_x\right\|_{L^2}^2
                                      +\|\theta_t\|_{L^2}^2\right]\notag\\
 &\quad\leq C\left[e^{2t}|P(e^t-1)|^2+
                \left|\frac d{dt}(e^tP(e^t-1))\right|^2\right],
\label{local-dissipation}\\
&\|\theta_{xx}\|_{L^2}\leq C\left[\left\|\left(u_x-\frac{R\theta}\mu+
               \frac{e^tP(e^t-1)v}\mu\right)_x\right\|_{L^2}+
                              \|\theta_t\|_{L^2}+e^t|P(e^t-1)|\right].
\label{local-elliptic}
\end{align}

By \eqref{corrected-residual-boundary},
\begin{align*}
&\left(u_x-\frac{R\theta}\mu+\frac{e^tP(e^t-1)v}\mu\right)_t
 =\left[\frac\mu v\left(u_x-\frac{R\theta}\mu+
                        \frac{e^tP(e^t-1)v}\mu\right)\right]_{xx}\\
 &\hspace{35mm}-\frac{R(1-\alpha)}\mu\theta_t
                   +\left(\frac{e^tP(e^t-1)v}\mu\right)_t,\\
&c_v\theta_t=\frac{\mu u_x}{v}\left(u_x-\frac{R\theta}\mu+
                \frac{e^tP(e^t-1)v}\mu\right)-e^tP(e^t-1)u_x\\
 &\hspace{15mm}+\frac\kappa v\theta_{xx}+
                                  \left(\frac\kappa v\right)_x\theta_x.
\end{align*}
Test the first equation by $u_x-R\theta/\mu+e^tP(e^t-1)v/\mu$ and the second by
\[
\frac{(1-\alpha)R}{\tilde\mu^2}
\left(\frac{\tilde\mu A(t_0)}R\right)^{-2\alpha/(1-\alpha)}\theta_t.
\]
After integration by parts,
\begin{align}
&\frac d{dt}\left[\frac12\left\|u_x-\frac{R\theta}\mu+
                \frac{e^tP(e^t-1)v}\mu\right\|_{L^2}^2\right.\notag\\
 &\hspace{15mm}\left.+\frac{(1-\alpha)\tilde\kappa}{2\tilde\mu}
       \left(\frac{\tilde\mu A(t_0)}R\right)^{(\beta-\alpha-1)/(1-\alpha)}
                                       \|\theta_x\|_{L^2}^2\right]\notag\\
&\quad+\int_0^1\frac\mu v
             \left|\left(u_x-\frac{R\theta}\mu+
                         \frac{e^tP(e^t-1)v}\mu\right)_x\right|^2dx\notag\\
 &\quad+\frac{(1-\alpha)Rc_v}{\tilde\mu^2}
       \left(\frac{\tilde\mu A(t_0)}R\right)^{-2\alpha/(1-\alpha)}
                                       \|\theta_t\|_{L^2}^2\notag\\
&=\int_0^1\left[\frac{(1-\alpha)R}{\tilde\mu^2}
       \left(\frac{\tilde\mu A(t_0)}R\right)^{-2\alpha/(1-\alpha)}
                  \frac{\mu u_x}v-\frac{R(1-\alpha)}\mu\right]\notag\\
 &\hspace{15mm}\times
       \left(u_x-\frac{R\theta}\mu+\frac{e^tP(e^t-1)v}\mu\right)\theta_tdx
 \notag\\
&\quad-\int_0^1\left(\frac\mu v\right)_x
       \left(u_x-\frac{R\theta}\mu+\frac{e^tP(e^t-1)v}\mu\right)
       \left(u_x-\frac{R\theta}\mu+\frac{e^tP(e^t-1)v}\mu\right)_xdx
 \notag\\
&\quad+\int_0^1\left(u_x-\frac{R\theta}\mu+
                 \frac{e^tP(e^t-1)v}\mu\right)
                  \left(\frac{e^tP(e^t-1)v}\mu\right)_tdx
 \notag\\
&\quad-\frac{(1-\alpha)R}{\tilde\mu^2}
       \left(\frac{\tilde\mu A(t_0)}R\right)^{-2\alpha/(1-\alpha)}
                         e^tP(e^t-1)\int_0^1u_x\theta_tdx
 \notag\\
 &\quad+\frac{(1-\alpha)R}{\tilde\mu^2}
       \left(\frac{\tilde\mu A(t_0)}R\right)^{-2\alpha/(1-\alpha)}\notag\\
 &\qquad\times\int_0^1\left\{\left[\frac\kappa v-
    \frac{\tilde\kappa}{A(t_0)}
       \left(\frac{\tilde\mu A(t_0)}R\right)^{\beta/(1-\alpha)}\right]\theta_{xx}
                       +\left(\frac\kappa v\right)_x\theta_x\right\}\theta_tdx
 \notag\\
 &=\sum_{j=1}^5I_j.\label{frozen-exact-energy}
\end{align}
Notice that
\begin{align*}
&\frac{(1-\alpha)R}{\tilde\mu^2}
 \left(\frac{\tilde\mu A(t_0)}R\right)^{-2\alpha/(1-\alpha)}
 \frac R{A(t_0)}\left(\frac{\tilde\mu A(t_0)}R\right)^{1/(1-\alpha)}
 =\frac{R(1-\alpha)}{\tilde\mu}
       \left(\frac{\tilde\mu A(t_0)}R\right)^{-\alpha/(1-\alpha)}.
\end{align*}
The endpoint conditions, \eqref{local-neighborhood} and the compact
coefficient bounds give
\begin{align*}
&\left\|u_x-\frac{R\theta}\mu+\frac{e^tP(e^t-1)v}\mu\right\|_{L^\infty}
 \leq\left\|\left(u_x-\frac{R\theta}\mu+
                    \frac{e^tP(e^t-1)v}\mu\right)_x\right\|_{L^2},\\
 &\|\theta_x\|_{L^\infty}\leq\|\theta_{xx}\|_{L^2},\\
&\left\|\left(\frac\mu v\right)_x\right\|_{L^2}+
  \left\|\left(\frac\kappa v\right)_x\right\|_{L^2}\leq C\eta,
 \qquad \|u_x\|_{L^2}+\|v_t\|_{L^2}\leq C,\\
&\left\|\frac\kappa v-
 \frac{\tilde\kappa}{A(t_0)}
 \left(\frac{\tilde\mu A(t_0)}R\right)^{\beta/(1-\alpha)}\right\|_{L^\infty}\\
&\quad\leq C\left[\|v-A(t_0)\|_{L^\infty}+
 \left\|\theta-
 \left(\frac{\tilde\mu A(t_0)}R\right)^{1/(1-\alpha)}\right\|_{L^\infty}\right]
 \leq C\eta,\\
&\left\|\frac{\mu u_x}v
 \left(u_x-\frac{R\theta}\mu+\frac{e^tP(e^t-1)v}\mu\right)\right\|_{L^2}\\
&\quad\leq C\left[1+
 \left\|u_x-\frac{R\theta}\mu+\frac{e^tP(e^t-1)v}\mu\right\|_{L^\infty}
                       +|e^tP(e^t-1)|\right]
\\
&\qquad\times\left\|u_x-\frac{R\theta}\mu+\frac{e^tP(e^t-1)v}\mu\right\|_{L^2}\\
&\quad\leq C(1+\eta)
 \left\|\left(u_x-\frac{R\theta}\mu+
                  \frac{e^tP(e^t-1)v}\mu\right)_x\right\|_{L^2},\\
&\|\theta_{xx}\|_{L^2}\leq C\left[\|\theta_t\|_{L^2}+
       \left\|\left(u_x-\frac{R\theta}\mu+
                      \frac{e^tP(e^t-1)v}\mu\right)_x\right\|_{L^2}+
                           e^t|P(e^t-1)|\right]\\
&\hspace{25mm}+C\eta\|\theta_{xx}\|_{L^2}.
\end{align*}
Choose $C\eta\leq1/2$ to obtain \eqref{local-elliptic}. The five terms in
\eqref{frozen-exact-energy} are estimated as follows. First,
\begin{align*}
&\frac{(1-\alpha)R}{\tilde\mu^2}
 \left(\frac{\tilde\mu A(t_0)}R\right)^{-2\alpha/(1-\alpha)}
                       \frac{\mu u_x}v-\frac{R(1-\alpha)}\mu\\
&\quad=\frac{(1-\alpha)R}{\tilde\mu^2}
 \left(\frac{\tilde\mu A(t_0)}R\right)^{-2\alpha/(1-\alpha)}
 \left[\frac\mu v\left(u_x-\frac{R\theta}\mu+
                      \frac{e^tP(e^t-1)v}\mu\right)-e^tP(e^t-1)\right]\\
&\qquad+\frac{(1-\alpha)R^2}{\tilde\mu^2}
 \left(\frac{\tilde\mu A(t_0)}R\right)^{-2\alpha/(1-\alpha)}
                              \frac\theta v-\frac{R(1-\alpha)}\mu,\\
&\left|\frac{(1-\alpha)R^2}{\tilde\mu^2}
 \left(\frac{\tilde\mu A(t_0)}R\right)^{-2\alpha/(1-\alpha)}
                            \frac\theta v-\frac{R(1-\alpha)}\mu\right|\\
&\quad\leq C\left[|v-A(t_0)|+
 \left|\theta-\left(\frac{\tilde\mu A(t_0)}R\right)^{1/(1-\alpha)}\right|\right]
 \leq C\eta,\\
|I_1|&\leq C\left[\eta+
 \left\|u_x-\frac{R\theta}\mu+\frac{e^tP(e^t-1)v}\mu\right\|_{L^\infty}\right]
 \left\|u_x-\frac{R\theta}\mu+\frac{e^tP(e^t-1)v}\mu\right\|_{L^2}
                                                   \|\theta_t\|_{L^2}\\
&\leq C\eta
 \left\|\left(u_x-\frac{R\theta}\mu+
                     \frac{e^tP(e^t-1)v}\mu\right)_x\right\|_{L^2}\|\theta_t\|_{L^2}\\
&\leq C\eta\left[
 \left\|\left(u_x-\frac{R\theta}\mu+
               \frac{e^tP(e^t-1)v}\mu\right)_x\right\|_{L^2}^2+\|\theta_t\|_{L^2}^2\right],\\
|I_2|&\leq\left\|\left(\frac\mu v\right)_x\right\|_{L^2}
 \left\|u_x-\frac{R\theta}\mu+\frac{e^tP(e^t-1)v}\mu\right\|_{L^\infty}
 \left\|\left(u_x-\frac{R\theta}\mu+\frac{e^tP(e^t-1)v}\mu\right)_x\right\|_{L^2}\\
&\leq C\eta\left\|\left(u_x-\frac{R\theta}\mu+
               \frac{e^tP(e^t-1)v}\mu\right)_x\right\|_{L^2}^2.
\end{align*}
Fix $c>0$ with
\[
 c\leq\frac\mu v,\qquad
 c\leq\frac{(1-\alpha)Rc_v}{\tilde\mu^2}
       \left(\frac{\tilde\mu A(t_0)}R\right)^{-2\alpha/(1-\alpha)}.
\]
Using the pressure derivative expanded after
\eqref{normalized-stress-heat}, we have
\begin{align*}
|I_3|&\leq C\left|\frac d{dt}(e^tP(e^t-1))\right|
       \left\|\left(u_x-\frac{R\theta}\mu+\frac{e^tP(e^t-1)v}\mu\right)_x\right\|_{L^2}\\
&\quad+C\eta\left\|\left(u_x-\frac{R\theta}\mu+
             \frac{e^tP(e^t-1)v}\mu\right)_x\right\|_{L^2}\|\theta_t\|_{L^2}\\
 &\quad+Ce^t|P(e^t-1)|\left\|\left(u_x-\frac{R\theta}\mu+
             \frac{e^tP(e^t-1)v}\mu\right)_x\right\|_{L^2}\\
&\leq\frac c8\left\|\left(u_x-\frac{R\theta}\mu+
                \frac{e^tP(e^t-1)v}\mu\right)_x\right\|_{L^2}^2\\
&\quad+C\eta\left[\left\|\left(u_x-\frac{R\theta}\mu+
                \frac{e^tP(e^t-1)v}\mu\right)_x\right\|_{L^2}^2+\|\theta_t\|_{L^2}^2\right]\\
&\quad+C\left[e^{2t}|P(e^t-1)|^2+
                       \left|\frac d{dt}(e^tP(e^t-1))\right|^2\right],\\
|I_4|&\leq Ce^t|P(e^t-1)|\|\theta_t\|_{L^2}
 \leq\frac c8\|\theta_t\|_{L^2}^2+Ce^{2t}|P(e^t-1)|^2,\\
|I_5|&\leq C\left[
 \left\|\frac\kappa v-
 \frac{\tilde\kappa}{A(t_0)}
 \left(\frac{\tilde\mu A(t_0)}R\right)^{\beta/(1-\alpha)}\right\|_{L^\infty}
                                      \|\theta_{xx}\|_{L^2}\right.\\
&\hspace{18mm}\left.+
 \left\|\left(\frac\kappa v\right)_x\right\|_{L^2}\|\theta_x\|_{L^\infty}
                                           \right]\|\theta_t\|_{L^2}\\
&\leq C\eta\|\theta_{xx}\|_{L^2}\|\theta_t\|_{L^2}
 \\
 &\leq C\eta\left[\left\|\left(u_x-\frac{R\theta}\mu+
             \frac{e^tP(e^t-1)v}\mu\right)_x\right\|_{L^2}^2+
                   \|\theta_t\|_{L^2}^2+e^{2t}|P(e^t-1)|^2\right].
\end{align*}
Consequently, taking $\eta$ smaller so that $C\eta\leq c/4$,
\begin{align*}
\sum_{j=1}^5|I_j|
&\leq\frac c2\left[
 \left\|\left(u_x-\frac{R\theta}\mu+
                  \frac{e^tP(e^t-1)v}\mu\right)_x\right\|_{L^2}^2+\|\theta_t\|_{L^2}^2\right]\\
&\quad+C\left[e^{2t}|P(e^t-1)|^2+
                  \left|\frac d{dt}(e^tP(e^t-1))\right|^2\right].
\end{align*}
Substitution into \eqref{frozen-exact-energy} proves
\eqref{local-dissipation}. The boundary conditions and
\eqref{local-elliptic} also imply
\begin{align}
&\left\|u_x-\frac{R\theta}\mu+\frac{e^tP(e^t-1)v}\mu\right\|_{L^2}^2
                                +\|\theta_x\|_{L^2}^2\notag\\
&\quad\leq C\left[\left\|\left(u_x-\frac{R\theta}\mu+
             \frac{e^tP(e^t-1)v}\mu\right)_x\right\|_{L^2}^2+
                    \|\theta_t\|_{L^2}^2+e^{2t}|P(e^t-1)|^2\right].
\label{local-spectral}
\end{align}

{\it Step 2.} 
Choose $0<\delta<\varepsilon/2$. By \eqref{pressure-weights} there is
a fixed $C_P$ such that, for every $t_0\geq1$,
\begin{align}
&\sup_{s\geq t_0}e^{2s}|P(e^s-1)|^2
 +\int_{t_0}^\infty e^{\delta(s-t_0)}
 \left[e^{2s}|P'(e^s-1)|+e^{2s}|P(e^s-1)|^2\right.\notag\\
 &\hspace{48mm}\left.+
              \left|\frac d{ds}(e^sP(e^s-1))\right|^2\right]ds
 \leq C_Pe^{-\delta t_0},\label{pressure-tail}\\
&|A(t)-A(t_0)|+
 \left|\left(\frac{\tilde\mu A(t)}R\right)^{1/(1-\alpha)}-
        \left(\frac{\tilde\mu A(t_0)}R\right)^{1/(1-\alpha)}\right|
 \leq C C_Pe^{-\delta t_0},\qquad t\geq t_0.\label{root-drift}
\end{align}
Keep $A(t_0)$ fixed in \eqref{local-dissipation}.
Combining \eqref{local-dissipation} with \eqref{local-spectral} and
choosing $0<\omega<\min\{\delta,1/2\}$ sufficiently small yields
\begin{align}
&e^{\omega(t-t_0)}\left[\left\|u_x-\frac{R\theta}\mu+
                      \frac{e^tP(e^t-1)v}\mu\right\|_{L^2}^2+\|\theta_x\|_{L^2}^2\right]
 \notag\\
&\quad+\int_{t_0}^te^{\omega(s-t_0)}
 \left[\left\|\left(u_x-\frac{R\theta}\mu+
            \frac{e^sP(e^s-1)v}\mu\right)_x\right\|_{L^2}^2
                       +\|\theta_t\|_{L^2}^2+\|\theta_{xx}\|_{L^2}^2+\|\theta_x\|_{L^2}^2\right]ds
 \notag\\
&\quad\leq C\left[\left\|u_x(t_0)-\frac{R\theta(t_0)}{\mu(t_0)}+
                 \frac{e^{t_0}P(e^{t_0}-1)v(t_0)}{\mu(t_0)}\right\|_{L^2}^2
                           +\|\theta_x(t_0)\|_{L^2}^2+C_Pe^{-\delta t_0}\right].
\label{local-weighted-rate}
\end{align}
Here $C$ is independent of $t_0$ for $c\leq A(t_0)\leq C$ and $0\leq\alpha\leq1/2$.
The momentum identity and fixed coefficient bounds give
\begin{align}
&\left\|\left(u_x-\frac{R\theta}\mu\right)_x\right\|_{L^2}^2+\|u_t\|_{L^2}^2\notag\\
&\quad\leq C\left[\left\|\left(u_x-\frac{R\theta}\mu+
          \frac{e^tP(e^t-1)v}\mu\right)_x\right\|_{L^2}^2+e^{2t}|P(e^t-1)|^2\right],
 \notag\\
&\left\|u_x-\frac{R\theta}\mu\right\|_{L^2}^2
 \leq C\left[\left\|u_x-\frac{R\theta}\mu+
                  \frac{e^tP(e^t-1)v}\mu\right\|_{L^2}^2+e^{2t}|P(e^t-1)|^2\right].
\label{local-residual-conversions}
\end{align}
Apply the weighted energy inequality for $z_t+z=b$,
\begin{equation}\label{damping-weighted}
e^{\omega(t-t_0)}\|z(t)\|_{L^2}^2+
 (1-\omega)\int_{t_0}^te^{\omega(s-t_0)}\|z(s)\|_{L^2}^2ds
 \leq\|z(t_0)\|_{L^2}^2+
                \int_{t_0}^te^{\omega(s-t_0)}\|b(s)\|_{L^2}^2ds,
\end{equation}
to the three equations
\begin{align}
&\left(v-\frac{R\theta}\mu\right)_t+v-\frac{R\theta}\mu
 =u_x-\frac{R\theta}\mu-\frac{R(1-\alpha)}\mu\theta_t,\notag\\
&(v_x)_t+v_x=\left(u_x-\frac{R\theta}\mu\right)_x+
                            \frac{R(1-\alpha)}\mu\theta_x,
 \qquad w_t+w=u_t.\label{three-damping-equations}
\end{align}
Together with \eqref{profile-square} and
\eqref{local-weighted-rate}--\eqref{local-residual-conversions}, this gives
\begin{align}
&\|v-A(t)\|_{H^1}^2+
 \left\|\theta-\left(\frac{\tilde\mu A(t)}R\right)^{1/(1-\alpha)}\right\|_{H^1}^2
 +\left\|u-\int_0^1u_0dx-A(t)(x-\tfrac12)\right\|_{H^1}^2\notag\\
&\leq Ce^{-\omega(t-t_0)}\left[
 \|v(t_0)-A(t_0)\|_{H^1}^2+
 \left\|\theta(t_0)-\left(\frac{\tilde\mu A(t_0)}R\right)^{1/(1-\alpha)}\right\|_{H^1}^2
 \right.\notag\\
&\hspace{35mm}\left.+\left\|u(t_0)-\int_0^1u_0dx-A(t_0)(x-\tfrac12)\right\|_{H^1}^2
                      +C_Pe^{-\delta t_0}\right].\label{provisional-profile-rate}
\end{align}
By \eqref{centered-w} and the bounded derivative of
$\theta\mapsto R\theta/\mu$,
\begin{align*}
&\|w(t_0)\|_{L^2}^2+\|v_x(t_0)\|_{L^2}^2
       +\left\|v(t_0)-\frac{R\theta(t_0)}{\mu(t_0)}\right\|_{L^2}^2\\
&\quad+\left\|u_x(t_0)-\frac{R\theta(t_0)}{\mu(t_0)}+
                   \frac{e^{t_0}P(e^{t_0}-1)v(t_0)}{\mu(t_0)}\right\|_{L^2}^2\\
&\leq C\left[\|v(t_0)-A(t_0)\|_{H^1}^2+
 \left\|\theta(t_0)-
       \left(\frac{\tilde\mu A(t_0)}R\right)^{1/(1-\alpha)}\right\|_{H^1}^2
 \right.\\
&\hspace{12mm}\left.+
 \left\|u(t_0)-\int_0^1u_0dx-A(t_0)(x-\tfrac12)\right\|_{H^1}^2
                         +e^{2t_0}|P(e^{t_0}-1)|^2\right].
\end{align*}
Sobolev embedding and \eqref{root-drift} consequently imply
\begin{align}
&\left[\|v-A(t_0)\|_{L^\infty}+
 \left\|\theta-\left(\frac{\tilde\mu A(t_0)}R\right)^{1/(1-\alpha)}\right\|_{L^\infty}
                 +\|v_x\|_{L^2}+\|\theta_x\|_{L^2}\right.\notag\\
&\hspace{15mm}\left.+\left\|u_x-\frac{R\theta}\mu+
       \frac{e^tP(e^t-1)v}\mu\right\|_{L^2}+e^t|P(e^t-1)|\right]^2\notag\\
&\leq C\left[\|v(t_0)-A(t_0)\|_{H^1}^2+
 \left\|\theta(t_0)-\left(\frac{\tilde\mu A(t_0)}R\right)^{1/(1-\alpha)}\right\|_{H^1}^2
 \right.\notag\\
&\hspace{20mm}\left.+\left\|u(t_0)-\int_0^1u_0dx-A(t_0)(x-\tfrac12)\right\|_{H^1}^2
                          +C_Pe^{-\delta t_0}\right].\label{neighborhood-improvement}
\end{align}

Let $C_I$ be a fixed bound in \eqref{profile-square-integral}, and
let $C_4$ dominate the constants in \eqref{neighborhood-improvement}.
Choose
\begin{equation}\label{entry-choices}
0<\xi\leq\min\{1,\eta^2/(64C_4)\},\qquad
S_0\geq1,\quad C_Pe^{-\delta S_0}\leq\xi.
\end{equation}
If $T\geq S_0+1+C_I/\xi$, averaging \eqref{profile-square-integral} over
$[S_0,S_0+1+C_I/\xi]$ supplies $t_0\in[S_0,S_0+1+C_I/\xi]$ such that
\begin{align*}
&\|v(t_0)-A(t_0)\|_{H^1}^2+
 \left\|\theta(t_0)-\left(\frac{\tilde\mu A(t_0)}R\right)^{1/(1-\alpha)}\right\|_{H^1}^2\\
&\hspace{25mm}+\left\|u(t_0)-\int_0^1u_0dx-A(t_0)(x-\tfrac12)\right\|_{H^1}^2
 \leq\frac{C_I}{S_0+1+C_I/\xi-S_0}\leq\xi.
\end{align*}
At $t_0$, the left side of \eqref{local-neighborhood} is less than
$\eta/2$. On its maximal interval below $\eta$,
\eqref{neighborhood-improvement} bounds its square by
$2C_4\xi\leq\eta^2/32$. Continuity therefore excludes a first
exit, and \eqref{provisional-profile-rate} holds on $[t_0,T]$.
On $[0,\min\{t_0,T\}]$ the uniform bounds, enlarged by $e^{\omega(S_0+1+C_I/\xi)}$,
give the same rate. This also covers $T<S_0+1+C_I/\xi$, and proves
\eqref{normalized-profile-rate} with $\lambda=\omega/2$.
Moreover,
\begin{align*}
&\int_0^{\min\{T,S_0+1+C_I/\xi\}}\left\|u_x-\frac{R\theta}\mu\right\|_{L^1}dt\\
&\quad\leq\sqrt{S_0+1+C_I/\xi}\left(\int_0^T
                 \left\|u_x-\frac{R\theta}\mu\right\|_{L^2}^2dt\right)^{1/2}\leq C,\\
&\int_{S_0+1+C_I/\xi}^T\left\|u_x-\frac{R\theta}\mu\right\|_{L^1}dt\\
&\quad\leq C\int_{S_0+1+C_I/\xi}^Te^{-\lambda t}dt\leq C
                           \qquad(T\geq S_0+1+C_I/\xi).
\end{align*}
This proves \eqref{normalized-residual-integral}.

{\it Step 3.} Equations
\eqref{local-weighted-rate}--\eqref{local-residual-conversions} give
\begin{align}
\int_{t_0}^Te^{\omega(s-t_0)}\left[
\|u_t\|_{L^2}^2+\|\theta_t\|_{L^2}^2+
 \left\|u_x-\frac{R\theta}\mu\right\|_{H^1}^2+
 \|\theta_x\|_{H^1}^2+
 \left|\frac d{ds}(e^sP(e^s-1))\right|^2\right]ds\leq C.
\label{late-weighted-derivatives}
\end{align}
For $t\geq t_0+1$, choose $s\in[t-1,t]$ with
$\|u_t(s)\|_{L^2}^2+c_v\|\theta_t(s)\|_{L^2}^2\leq Ce^{-\omega(t-t_0)}$.
The integrating factor in \eqref{normalized-coupled-time} then gives
\begin{align*}
&\|u_t(t)\|_{L^2}^2+\|\theta_t(t)\|_{L^2}^2\\
&\quad\leq C\left[e^{-\omega(t-t_0)}+
 \int_{t-1}^t\left(\left|\frac d{ds}(e^sP(e^s-1))\right|^2+
  \left\|u_x-\frac{R\theta}\mu\right\|_{H^1}^2\right.\right.\\
&\hspace{46mm}\left.\left.+
  \|\theta_x\|_{L^2}^2+\|\theta_t\|_{L^2}^2\right)ds\right]\\
&\qquad\times\exp\left\{C\int_{t-1}^t
 \left(\left\|u_x-\frac{R\theta}\mu\right\|_{H^1}^2+
                      \|\theta_x\|_{H^1}^2\right)ds\right\}
 \leq Ce^{-\omega(t-t_0)}.
\end{align*}
The reverse momentum and heat estimates give
\begin{align*}
&\left\|\left(u_x-\frac{R\theta}\mu\right)_x\right\|_{L^2}^2+\|u_{xx}\|_{L^2}^2
 \leq C\left(\|u_t\|_{L^2}^2+
       \left\|u_x-\frac{R\theta}\mu\right\|_{L^2}^2+\|\theta_x\|_{L^2}^2\right),\\
&\|\theta_{xx}\|_{L^2}^2\leq C\left(\|\theta_t\|_{L^2}^2+
       \left\|u_x-\frac{R\theta}\mu\right\|_{H^1}^2+\|\theta_x\|_{L^2}^2\right),
 \qquad w_{xx}=u_{xx}-v_x.
\end{align*}
Together with \eqref{three-damping-equations}, these prove
\eqref{normalized-full-rate} on $[t_0+1,T]$. On
$[1,\min\{t_0+1,T\}]$, use \eqref{normalized-weighted} and increase $C$
by the fixed factor $e^{\lambda(S_0+1+C_I/\xi+1)}$.
\end{proof}

By Lemmas~\ref{lm31}--\ref{lm311}, there are fixed $c,C_5,C_6>0$ such that
\begin{align}
&c\leq v,\theta\leq C_5,\qquad
 \sup_{t\leq T}\|(u,\theta,v)\|_{H^1}^2+
 \int_0^T\|(v_x,w_x,\theta_x,\theta_t,\theta_{xx})\|_{L^2}^2dt\notag\\
 &\hspace{35mm}+\int_0^T\sigma\|\theta_{xt}\|_{L^2}^2dt\leq C_5,\notag\\
&\int_0^T\left\|u_x-\frac{R\theta}\mu\right\|_{L^1}dt\leq C_6.
\label{normalized-total-estimate}
\end{align}
The second estimate follows from Lemma~\ref{lm311}. Choose
\begin{align}
M&\geq4\max\{2,C_5+C_6,c^{-1},M_0^2,
                       \underline v^{-1},\underline\theta^{-1}\},\notag\\
0<\varepsilon_0&\leq
       \min\{\tfrac12,\eta_0(2M)^{-5/2},(2M)^{-\beta-2}\}.
\label{normalized-threshold}
\end{align}
For $0\leq\alpha\leq\varepsilon_0$, \eqref{normalized-total-estimate} gives
\begin{align}
&\frac2M<v,\theta<\frac M2,\notag\\
&\sup_{t\leq T}\|(u,\theta,v)\|_{H^1}^2+
 \int_0^T\left\|w_x+v-\frac{R\theta}\mu\right\|_{L^1}dt
 \leq C_5+C_6\leq\frac M4,\notag\\
&\int_0^T\|(v_x,w_x,\theta_x,\theta_t,\theta_{xx})\|_{L^2}^2dt
       +\int_0^T\sigma\|\theta_{xt}\|_{L^2}^2dt
 \leq C_5\leq\frac M4.
\label{normalized-strict-improvement}
\end{align}
Thus the bounds defining $Y(0,T;M)$ improve strictly to those defining
$Y(0,T;M/2)$. Continuity and Proposition~\ref{local-property} exclude a
finite first exit and give a single solution for all normalized times.

Restoring the checks in \eqref{time-change}, the physical derivatives are
\begin{align}
&v_x(x,s)=e^t\check v_x(x,t),\quad v_s(x,s)=\check u_x(x,t),\quad
 v_{xs}(x,s)=\check u_{xx}(x,t),\notag\\
&u_x=\check u_x,\quad u_{xx}=\check u_{xx},\quad
 u_s=e^{-t}\check u_t,\notag\\
 &\theta_x=\check\theta_x,\quad\theta_{xx}=\check\theta_{xx},\quad
 \theta_s=e^{-t}\check\theta_t,\qquad ds=e^t dt.
\label{physical-derivatives}
\end{align}
For every finite $S>0$, put $T=\log(1+S)$. First,
\[
 \int_0^T\|\check u_x\|_{L^2}^2dt
 \leq2T\sup_{t\leq T}\|\check v\|_{L^2}^2+
                         2\int_0^T\|\check w_x\|_{L^2}^2dt\leq C(1+T).
\]
By \eqref{physical-derivatives},
\begin{align*}
\int_0^S\|v_x\|_{L^2}^2ds
 &=\int_0^Te^{3t}\|\check v_x\|_{L^2}^2dt
 \leq Ce^{3T}=C(1+S)^3,\\
\int_0^S\|(u_s,\theta_s)\|_{L^2}^2ds
 &=\int_0^Te^{-t}\|(\check u_t,\check\theta_t)\|_{L^2}^2dt\leq C,\\
\int_0^S\|(u_x,\theta_x,u_{xx},\theta_{xx},v_{xs})\|_{L^2}^2ds
 &=\int_0^Te^t\|(\check u_x,\check\theta_x,
               \check u_{xx},\check\theta_{xx},\check u_{xx})\|_{L^2}^2dt\\
 &\leq Ce^T(1+T)
 =C(1+S)[1+\log(1+S)],\\
\int_0^S\|v_s\|_{H^1}^2ds
 &=\int_0^Te^t\|\check u_x\|_{H^1}^2dt
 \leq C(1+S)[1+\log(1+S)].
\end{align*}
These are the finite-time integral estimates in \eqref{th2-vu}.
Continuity follows from \eqref{time-change} and
Proposition~\ref{local-property}.
The unique root of \eqref{normalized-root} is exactly the pullback of
the root of \eqref{th2-A}, by \eqref{normalized-root-energy}. Therefore
\begin{align*}
&\left\|\begin{pmatrix}
\displaystyle\frac{v(x,s)}{1+s}-A(\log(1+s))\\
\displaystyle u(x,s)-\int_0^1u_0dx-A(\log(1+s))(x-\tfrac12)\\
\displaystyle\theta(x,s)-
\left(\frac{\tilde\mu A(\log(1+s))}R\right)^{1/(1-\alpha)}
\end{pmatrix}\right\|_{H^1}\\
&\qquad\leq Ce^{-\lambda\log(1+s)}=C(1+s)^{-\lambda}.
\end{align*}
Writing the root in physical time again gives \eqref{th2-contr}.
This completes the proof of Theorem~\ref{tm12}.

\end{document}